\tikzset{midarrow/.style = {postaction=decorate, decoration={markings,mark=at position .5 with \arrow{stealth}}}}
\newcommand{\NN}{\mathbb{N}}
\newcommand{\ZZ}{\mathbb{Z}}
\newcommand{\SC}{\operatorname{Soc}}
\newcommand{\supp}{\operatorname{supp}}
\newcommand{\MCE}{\operatorname{MCE}}
\newcommand{\KP}{\operatorname{KP}}
\newcommand{\curly}{\mathrel{\leadsto}}
\newtheorem{thm}{Theorem}[section]
\newtheorem{cor}[thm]{Corollary}
\newtheorem{lem}[thm]{Lemma}
\newtheorem{prop}[thm]{Proposition}
\newtheorem{prob}[thm]{Program}
\theoremstyle{definition}
\newtheorem{dfn}[thm]{Definition}
\newtheorem{dfns}[thm]{Definitions}
\theoremstyle{remark}
\newtheorem{rmk}[thm]{Remark}
\newtheorem{rmks}[thm]{Remarks}
\newtheorem{example}[thm]{Example}
\newtheorem{examples}[thm]{Examples}
\newcommand{\lspan}{\operatorname{span}}
\def\mG{\mathcal{G}}
\def\G{\Gamma}
\newcommand{\Typ}{\operatorname{Typ}}
\newcommand{\typ}{\operatorname{typ}}
\newcommand{\gr}{\operatorname{gr}}
\newcommand{\gb}{\operatorname{gb}}
\begin{document}

\title[The Talented Monoid of Higher-Rank Graphs]{The Talented Monoid of Higher-Rank Graphs\\ with Applications to Kumjian-Pask Algebras}

\author[R. Hazrat]{Roozbeh Hazrat}
\address{(R. Hazrat) Centre for Research in Mathematics and Data Science, Western Sydney University, Sydney NSW 2150, Australia}
\email{r.hazrat@westernsydeny.edu.au}

\author[P. Mukherjee]{Promit Mukherjee}
\address{(P. Mukherjee) Department of Mathematics, Jadavpur University, Kolkata-700032, India}
\email{promitmukherjeejumath@gmail.com}

\author[D. Pask]{David Pask}
\address{(D. Pask) School of Mathematics and Applied Statistics, University of Wollongong, Wollongong NSW 2522, Australia}
\email{david.a.pask@gmail.com}

\author[S.K. Sardar]{Sujit Kumar Sardar}
\address{ (S. K. Sardar) Department of Mathematics, Jadavpur University, Kolkata-700032, India}
\email{sksardarjumath@gmail.com} 

\subjclass[2020]{16W50, 16S99, 16D25, 18B40}

\keywords{Higher-rank graph, graph monoid, talented monoid, Kumjian-Pask algebra, aperidocity, cofinality, type semigroup, semisimplicity}

\begin{abstract}
Given a row-finite higher-rank $k$-graph $\Lambda$, we define a commutative monoid $T_\Lambda$ which is a higher-rank analogue of the talented monoid of a directed graph introduced in (J. Algebra \textbf{384} (2013), 242-266) and (J. Algebra \textbf{547} (2020), 430-455). The \emph{talented monoid} $T_\Lambda$ is canonically a $\mathbb{Z}^k$-monoid with respect to the action of state shift. This monoid coincides with the positive cone of the graded Grothendieck group $K_0^{\gr}(\KP_\mathsf{k}(\Lambda))$ of the Kumjian-Pask algebra $\KP_\mathsf{k}(\Lambda)$ with coefficients in a field $\mathsf{k}$. The aim of the paper is to investigate this $\mathbb{Z}^k$-monoid as a capable invariant for classification of Kumjian-Pask algebras. 

We establish the $\mathbb{Z}^k$-monoid isomorphism $T_\Lambda\cong T_{\mathcal{G}_\Lambda}=\Typ(\mathcal{G}_\Lambda\times_c \mathbb{Z}^k)$, the type monoid of the inverse semigroup of the compact open bisections of the skew-product groupoid $\mathcal{G}_\Lambda\times_c \mathbb{Z}^k$ where $c:\mathcal{G}_\Lambda\longrightarrow \mathbb{Z}^k$ is the canonical cocycle. From this it follows that $T_\Lambda$ is a refinement monoid. If $\mathbb{Z}^k$ acts freely on $T_\Lambda$ (i.e., if $T_\Lambda$ has no nonzero periodic element), then we show that the $k$-graph $\Lambda$ is aperiodic in the sense of Lewin and Sims (Math. Proc. Camb. Phil. Soc. \textbf{149} (2010), 333-350). The converse is also proved to be true provided $\Lambda$ has no sources and $T_\Lambda$ is atomic. Moreover in this case, we provide a talented monoid characterization for strongly aperiodic $k$-graphs. We prove that for a row-finite $k$-graph $\Lambda$ without sources, cofinality is equivalent to the simplicity of $T_\Lambda$ as a $\mathbb{Z}^k$-monoid. In view of this we provide a talented monoid criterion for the Kumjian-Pask algebra $\KP_R(\Lambda)$ of $\Lambda$ over a unital commutative ring $R$ to be graded basic ideal simple. We also describe the minimal left ideals of $\KP_\mathsf{k}(\Lambda)$  in terms of the aperiodic atoms of $T_\Lambda$ and thus obtain a monoid theoretic characterization for $\SC(\KP_\mathsf{k}(\Lambda)$) to be an essential ideal. These results help us to characterize semisimple Kumjian-Pask algebras through the lens of $T_\Lambda$. 
\end{abstract}

\maketitle

\section{Introduction}\label{sec introduction}
Let $\mG$ be an ample Hausdorff groupoid, $\G$ a discrete group and 
$c\colon \mG \to \Gamma$ a cocycle. Consider the skew-product groupoid $\mG \times_c \Gamma$. The homogeneous compact open bisections of this groupoid form a Boolean inverse semigroup \cite{Bosa, Cordeiro}. We call the type monoid of this inverse semigroup, the \emph{talented monoid} of $\mG$:
\[T_\mG := \Typ(\mG \times_c \Gamma)\] 
which is a commutative $\G$-monoid generated by symbols $\typ(U \times \alpha)$, where $U$ is a compact open subset of $\mG^{0}$ and $\alpha \in \Gamma$, with an action of $\Gamma$ on $T_\mG$ defined 
 on generators by 
\[{}^\beta \typ(U \times \alpha) := \typ(U \times \beta \alpha)\]
with $\beta \in \G$ and extended linearly to all elements. 

This paper pursue the following program for the case of higher-rank graphs: 

\begin{prob}\label{genproblem}
Describe the class of $\Gamma$-graded ample groupoids $\mG$ such that the talented monoid $T_\mG$, as a $\Gamma$-monoid, is a complete invariant for Steinberg and groupoid $C^*$-algebras. 
\end{prob}

When $E$ is a directed graph and $\mG_E$ is the associated graph groupoid, then $T_{\mG_E}$ is the talented monoid of the graph $E$ \cite{Hazrat-main, Hazrat} which coincides with the positive cone of the graded Grothendieck group $K_0^{\gr}(L_\mathsf{k}(E))$  of Leavitt path algebra $L_\mathsf{k}(E)$ with  coefficient in a field $\mathsf{k}$. ~\cite{Hazrat}. Then Program~\ref{genproblem} takes the form of the Graded Classification Conjecture~\cite{willie,mathann}. Till date a lot of evidence has been found in the support of this conjecture (see \cite{Hazrat,Cordeiro}), also it has been proved to be true for some classes of directed graphs; for details see \cite{willie}.

The notion of higher-rank graphs (or, $k$-graphs) was first introduced in \cite{Kumjian-Pask} to obtain a graph theoretic framework of the study performed by Robertson and Steger in the seminal papers \cite{RS1, RS2}. Modelled on the path category of a directed graph, higher-rank graphs, their $C^*$-algebras (see \cite{Kumjian-Pask,Yeend,Robertson} etc.) and their algebraic counterparts namely Kumjian-Pask algebras (see \cite{Pino,Flynn,Clark}) cover a wider class of examples including graph $C^*$-algebras \cite{KPR} and Leavitt path algebras \cite{Abrams,Abrams-Monograph} respectively.

Given a row-finite $k$-graph with no sources, Kumjian and the third named author in \cite{Kumjian-Pask} associated an ample Hausdorff groupoid $\mathcal{G}_\Lambda$ which is known as the \emph{path groupoid} of $\Lambda$. This gives a groupoid model for the higher-rank graph $C^*$-algebra as it was shown in \cite[Corollary 3.5]{Kumjian-Pask} that $C^*(\Lambda)\cong C^*(\mathcal{G}_\Lambda)$. On the other hand it follows from the work of Clark and Pangalela \cite[Proposition 5.4]{Clark} that the Steinberg algebra of the groupoid $\mathcal{G}_\Lambda$ is isomorphic to the Kumjian-Pask algebra $\KP_R(\Lambda)$ where $R$ is a unital commutative ring. As a consequence of this, fundamental algebraic properties like simplicity and basic ideal simplicity of the Kumjian-Pask algebra are reflected by certain properties (minimality and effectiveness) of the path groupoid $\mathcal{G}_\Lambda$. Thus to translate Program \ref{genproblem} in the language of higher-rank graphs, we should focus on the talented monoid $T_{\mathcal{G}_\Lambda}$ and observe how the algebraic properties of the higher-rank graph algebras are governed by $T_{\mathcal{G}_\Lambda}$; this is what we have tried to accomplish in the present paper.

As the talented monoid of a directed graph is the graded version of the graph monoid (see \cite{Ara,APW}), we first define \emph{$k$-graph monoids} as higher-rank graph analogue of graph monoids. However the transition from directed graphs to $k$-graphs is not straightforward at all since in a general $k$-graph where $k\ge 2$, a vertex can receive paths from several different directions while in a graph a vertex can emit edges in only one direction. Keeping this fact in mind, we suitably define a monoid $M_\Lambda$ for a row-finite $k$-graph $\Lambda$. One of the important properties of graph monoids which transits also to the talented monoid (via the natural isomorphism of talented monoid with the graph monoid of the covering graph) is the \emph{Confluence property}. We observe that unlike graph monoid, $k$-graph monoid $M_\Lambda$ of a $k$-graph $\Lambda$ with sources may not have the confluence property; see Remark \ref{rem confluence may not follow}. Also we exhibit a $k$-graph monoid $M_\Lambda$ (see Example \ref{ex k-graph monoid which is not a graph monoid}) which can not be realized as the graph monoid $M_E$ of any row-finite directed graph $E$. Looking at these obstacles which arise for $k$-graphs with sources, we decide to concentrate only on row-finite $k$-graphs which do not have any source and prove that the monoids of such $k$-graphs indeed admit a suitable confluence property (see Lemma \ref{lem the confluence lemma}). 

We define the \emph{talented monoid} for a row-finite $k$-graph $\Lambda$ as the commutative monoid generated by $\Lambda^0\times \mathbb{Z}^k$ subject to the relations:
\begin{center}
    $v(n)=\displaystyle{\sum_{\alpha\in v\Lambda^m}} s(\alpha)(n+m)$ for each $v\in \Lambda^0$ with $v\Lambda^m\neq \emptyset$.
\end{center}
There is a natural action of $\mathbb{Z}^k$ on $T_\Lambda$ given by state shift: $^n v(m):=v(m+n)$ for all $v\in \Lambda^0$ and $n,m\in \mathbb{Z}^k$. This makes $T_\Lambda$ a $\mathbb{Z}^k$-monoid which is isomorphic to the $k$-graph monoid $M_{\overline{\Lambda}}$ (see Theorem \ref{thm:talisrefine}) where $\overline{\Lambda}$ is the canonical skew-product of $\Lambda$ by $\mathbb{Z}^k$. Interestingly enough, the monoid $T_\Lambda$ has two more realizations. Firstly, in Proposition \ref{pro realizing as a direct limit}, we show that $T_\Lambda$ is the direct limit of a certain directed system considered by Ara et al. \cite{Ara-Hazrat} in the study of the graded $K$-groups of Kumjian-Pask algebras. As a consequence, $T_\Lambda$ becomes isomorphic with the graded $\mathcal{V}$-monoid of the Kumjian-Pask algebra of $\Lambda$ over a field $\mathsf{k}$ and its group completion becomes the graded Grothendieck group $K_0^{\gr}(\KP_\mathsf{k}(\Lambda))$. Secondly, in Theorem \ref{th isomorphism between talented monoid and type monoid}, we prove that $T_\Lambda$ is isomorphic (as $\mathbb{Z}^k$-monoid) to the talented monoid $T_{\mathcal{G}_\Lambda}$ of the path groupoid $\mathcal{G}_\Lambda$. It then readily follows that $T_\Lambda$ is a conical refinement monoid. Moreover, in view of this result, the isomorphism $T_E\cong T_{\mathcal{G}_E}$ obtained in \cite{Cordeiro} in the level of directed graphs, is perfectly generalized in the setting of $k$-graphs. 

In the attempt of gathering evidence that $T_\Lambda$ is a possible complete invariant for most of the Kumjian-Pask algebras, we initiate in this paper, the study of connecting the monoid theoretic properties of $T_\Lambda$ as well as the properties based on the natural $\mathbb{Z}^k$-action with the geometry of the underlying $k$-graph $\Lambda$. In this direction, here we mainly characterize the two most important properties of $\Lambda$ namely \emph{aperiodicity} and \emph{cofinality}. 

Aperiodicity, in the setting of $k$-graphs, is the suitable analogue of Condition $(L)$ of directed graphs which states that every cycle in the directed graph has an exit. In the literature there are various formulations of aperiodicity depending on the complexity of the $k$-graph concerned e.g., see \cite{Kumjian-Pask, Robertson, Yeend, Lewin}. However these are all equivalent when considered for row-finite $k$-graphs with no sources. Aperiodicity is the main hypothesis in the Cuntz-Krieger uniqueness theorem for $C^*$-algebras (see \cite[Theorem 4.6]{Kumjian-Pask}) and Kumjian-Pask algebras (see \cite[Theorem 4.7]{Pino}). We prove in Proposition \ref{pro sufficient condition of aperiodicity}, that if the group $\mathbb{Z}^k$ acts freely on the monoid $T_\Lambda$ then $\Lambda$ is aperiodic in the sense of Lewin and Sims (see Definition \ref{def LS convention}). However unlike directed graphs, the converse does not hold for general $k$-graphs; see Examples \ref{ex counter example for aperiodicity}, \ref{ex simple but not semisimple}. By establishing that $T_\Lambda$ satisfies the confluence property, we show that if $T_\Lambda$ is \emph{atomic} then it is also a \emph{unique factorization monoid}. In this case the converse is proved to be true (see Theorem \ref{th Equivalence for aperiodicity in UF}). Furthermore, for a row-finite $k$-graph without sources $\Lambda$ with atomic talented monoid, we provide a necessary and sufficient condition for \emph{strong aperiodicity} of $\Lambda$, which is the $k$-graph theoretic generalization of Condition $(K)$ used in the study of Leavitt path algebras and graph $C^*$-algebras. 

Cofinality on the other hand, is the property of a $k$-graph which is a necessary and sufficient requirement for the graded ideal simplicity of the associated Kumjian-Pask algebra. Indeed, in \cite[Lemma 5.12]{Pino}, it was proved that if $\Lambda$ is cofinal then the only hereditary and saturated subsets of $\Lambda^0$ are the trivial ones. By establishing a lattice isomorphism in Proposition \ref{pro the lattice isomorphism} between the lattice of hereditary saturated subsets of $\Lambda^0$ and the lattice of $\mathbb{Z}^k$-order ideals of $T_\Lambda$, here we show that the converse implication also holds. Also, this helps us to show that the simplicity of $T_\Lambda$ as a $\mathbb{Z}^k$-monoid, is equivalent to the cofinality of $\Lambda$ (see Proposition \ref{pro sufficient condition for cofinality}). Therefore our results show that the properties of Kumjian-Pask algebras of being graded ideal simple, graded basic ideal simple and ideal simple remain invariant under $\mathbb{Z}^k$-isomorphism of the talented monoids. 

In \cite{Brown} the authors identifies the socle of the Kumjian-Pask algebra $\KP_\mathsf{k}(\Lambda)$ by introducing \emph{line points} in the $k$-graph $\Lambda$. They obtained several equivalent conditions for semisimplicity of $\KP_\mathsf{k}(\Lambda)$. In Theorem \ref{th line points characrerization}, we show that line points of a $k$-graph $\Lambda$ can be described completely in terms of the $\mathbb{Z}^k$-orbits of the aperiodic atoms of $T_\Lambda$. It provides us with a talented monoid realization of the socle of $\KP_\mathsf{k}(\Lambda)$. Based on this we characterize semisimple Kumjian-Pask algebras through the lens of $T_\Lambda$ and end up adding some more conditions to the list given in \cite[Theorem 4.3]{Brown}. Our result shows that for a row-finite $k$-graph $\Lambda$ which do not have sources, $\KP_\mathsf{k}(\Lambda)$ is semisimple if and only if $T_\Lambda$ is atomic and $\mathbb{Z}^k$ acts freely on $T_\Lambda$. As both these properties are $\mathbb{Z}^k$-isomorphism invariant, we conclude that for two row-finite $k$-graphs without sources $\Lambda,\Omega$ whose talented monoids are $\mathbb{Z}^k$-isomorphic, $\KP_\mathsf{k}(\Lambda)$ is semisimple if and only if $\KP_\mathsf{k}(\Omega)$ is semisimple. Moreover, considering the matrix decomposition of these algebras we are able to show that in this case the Kumjian-Pask algebras $\KP_\mathsf{k}(\Lambda)$ and $\KP_\mathsf{k}(\Omega)$ are isomorphic as $\mathsf{k}$-algebras (see Theorem \ref{th isomorphic TM gives isomorphic KPA}). 

We briefly describe the contents of the paper. In order to make the article self-contained, in Section \ref{sec preliminaries} we include a bit lengthy list of preliminary concepts related to higher-rank graphs, commutative monoids, groupoids and their type semigroups. The main results are given in Sections \ref{sec the talented monoid}--\ref{sec aperiodicity and KP algebra}. We define the $k$-graph monoid $M_\Lambda$ in Section \ref{sec the talented monoid}, prove a confluence lemma for these monoids when $\Lambda$ is a row-finite $k$-graph without sources and then introduce the main object of study, the talented monoid $T_\Lambda$. In section \ref{sec cofinality}, we show that the lattices $\mathcal{H}(\Lambda)$ (lattice of hereditary saturated subsets of $\Lambda^0$), $\mathcal{L}(T_\Lambda)$ (lattice of $\mathbb{Z}^k$-order ideals of $T_\Lambda$) and $\mathcal{L}^{\gb}(\KP_R(\Lambda))$ (lattice of all graded basic ideals of $\KP_R(\Lambda)$) are mutually lattice isomorphic. This helps us to connect cofinality of $\Lambda$ with the property of $T_\Lambda$. In Section \ref{sec aperiodicity and KP algebra} amongst other results, we characterize line points of $\Lambda$ via the talented monoid and consequently prove that $\Lambda$ is aperiodic as a $k$-graph if and only if every atom of $T_\Lambda$ is aperiodic if and only if $\mathbb{Z}^k$ acts freely on $T_\Lambda$. The line point characterization also gives us the ingredient to obtain a talented monoid criteria  for $\SC(\KP_\mathsf{k}(\Lambda))$ to be an essential ideal and for the semisimplicity of $\KP_\mathsf{k}(\Lambda)$. We conclude in Section \ref{sec examples} by giving some examples which illustrate the results obtained throughout the paper.

\section{Preliminaries}\label{sec preliminaries}
In this section we include the background information needed in the development of the paper. Readers who are well familiar with all these concepts may directly proceed to Section \ref{sec the talented monoid}.
\subsection{Higher-rank graphs}
Throughout this paper for any positive integer $k$, $\mathbb{N}^k$ stands for the set of all $k$-tuples of non-negative integers. This can be realized simultaneously as an commutative monoid under coordinatewise addition with identity $0$ and as a small category with only one object. For two $k$-tuples $m=(m_1,m_2,...,m_k)$ and $n=(n_1,n_2,...,n_k)$ in $\mathbb{N}^k$, we write $m\le n$ if $m_i\le n_i$ for all $i=1,2,...,k$. This defines a partial order on $\mathbb{N}^k$ which in turn makes it into a lattice with join and meet defined as the coordinatewise maximum and minimum respectively. For $i=1,2,...,k$ we denote $e_i$ to be the $k$-tuple with $1$ at the $i^{\text{th}}$ coordinate and $0$ at the other coordinates.
\begin{dfn}\label{def higher-rank graph}
A $k$-graph is defined as a $5$-tuple $\Lambda=(\Lambda^0,\Lambda,r,s,d)$ where $\Lambda$ is a countable small category with (countable) set of objects $\Lambda^0$ (which can also be viewed as a subset of $\Lambda$ by identifying each object with the identity morphism on it), $r,s:\Lambda\longrightarrow \Lambda^0$ are the \textit{range} and \textit{source} maps, and $d:\Lambda\longrightarrow \mathbb{N}^k$ is a functor (called the \textit{degree} functor) which satisfies the \emph{unique factorization property}: for every $\lambda\in \Lambda$ with $d(\lambda)=m+n$, there exist \emph{unique} $\mu_1,\mu_2\in \Lambda$ such that $\lambda=\mu_1\mu_2$, $d(\mu_1)=m$, $d(\mu_2)=n$.
\end{dfn}

The factorization property is equivalent to saying that the canonical map 
\begin{align*}
\{(\lambda,\mu)~|~d(\lambda)=m,d(\mu)=n, s(\lambda)=r(\mu)\} & \longrightarrow d^{-1}(m+n)\\ 
(\lambda,\mu) & \longmapsto \lambda\mu 
\end{align*}
is bijective. We often call the arrows or morphisms of the category $\Lambda$ as \emph{paths} and the objects as \emph{vertices}.

We fix some notations related with $k$-graphs which will be used frequently in the paper. For each $n\in \mathbb{N}^k$, we write $\Lambda^n:=d^{-1}(n)$. For objects $u, v\in \Lambda^0$, $v\Lambda:=r^{-1}(v)$, $\Lambda u:=s^{-1}(u)$ and $v\Lambda u:=r^{-1}(v)\cap s^{-1}(u)$. Similarly the sets $v\Lambda^n$, $\Lambda^n u$ and $v\Lambda^n u$ are respectively defined as $r^{-1}(v)\cap d^{-1}(n)$, $s^{-1}(u)\cap d^{-1}(n)$ and $r^{-1}(v)\cap s^{-1}(u)\cap d^{-1}(n)$. For $U,V\subseteq \Lambda$, we write $UV:=\{\lambda\mu~|~\lambda\in U, \mu\in V$ and $s(\lambda)=r(\mu)\}$. 

Let us look at some examples. 

\begin{examples} \label{ex higher-rank graphs}
\begin{enumerate}[$(i)$]

\item A $1$-graph is simply the path category of a directed graph (see \cite{Kumjian-Pask}).

\item Let $\Lambda_k = \NN^k$ where $d$ is the identity map. This is a $k$-graph with only one object.

\item (\textbf{The grid graph}) Let $\Omega_k = \{ (m,n) \in \mathbb{N}^k \times \mathbb{N}^k : m \le n \}$, and $\Omega_k^0 = \mathbb{N}^k$. Define $r,s :\Omega_k \longrightarrow
\Omega_k^0$ by $r (m,n) = m$,  $s (m,n) = n$,
and for $m \le n \le p \in \NN^k$ define $(m,n)(n,p)=(m,p)$ and $d (m,n) = n-m$. Then $( \Omega_k , d )$ is a $k$-graph where $\Omega_k^0$ is identified with $\{ (m,m) : m \in \mathbb{N}^k \}\subseteq \Omega_k$. This $k$-graph is called the \emph{infinite grid graph} as its $1$-\emph{skeleton} (see \cite{Raeburn} for more details) looks like an infinite grid in $k$-dimensional space. For each $m\in \mathbb{N}^k$, one may define a finite grid graph $\Omega_{k,m}$ just by restricting the set of objects as $\Omega_{k,m}^0:=\{n\in \mathbb{N}^k~|~n\le m\}$ and keeping the other things as it is.

\item We may similarly define a $k$-graph $\Delta_k$ with $\Delta_k = \{ (m,n) \in \mathbb{Z}^k \times \mathbb{Z}^k : m \le n \}$, and $\Delta_k^0 = \mathbb{Z}^k$.

\item (\textbf{$2$-graph from $1$-graph}) Suppose $\Lambda$ is a $k$-graph and $f:\mathbb{N}^l\longrightarrow \mathbb{N}^k$ is any monoid morphism. Then one can construct an $l$-graph $f^*(\Lambda)$ (see \cite[Definition 1.9]{Kumjian-Pask}) where 
\begin{displaymath}
f^*(\Lambda):=\{(\lambda,n)\in \Lambda\times \mathbb{N}^l~|~d(\lambda)=f(n)\}
\end{displaymath}
with $s((\lambda,n)):=(s(\lambda),0)$, $r((\lambda,n)):=(r(\lambda),0)$ and $d((\lambda,n)):=n$. As a particular case if $E$ is a directed graph and $f:\mathbb{N}^2\longrightarrow \mathbb{N}$ is defined by $f((x,y)):=x+y$ then we may form the $2$-graph $f^*(E^*)$. For example if $R_1$ is the \textit{rose with $1$-petal} then $f^*(R_1^*)$ is the $2$-graph whose $1$-skeleton consists of a single vertex with two loops on it; one of degree $(1,0)$ and the other one is of degree $(0,1)$. Note that it coincides with the $2$-graph $\Lambda_2$ of $(ii)$. 
\end{enumerate}
\end{examples}

A $k$-graph $\Lambda$ is said to be \textit{row-finite} if $|v\Lambda^n|< \infty$ for all $v\in \Lambda^0$ and $n\in \mathbb{N}^k$. It is said to have no \textit{sources} if $v\Lambda^n\neq \emptyset$ for all $v\in \Lambda^0$ and $n\in \mathbb{N}^k$. If both the conditions are satisfied they we say that $\Lambda$ is \emph{row-finite without sources}. In this article we mainly focus on such $k$-graphs. There are other types of $k$-graphs defined in the literature by weakening the above conditions in various ways; for example \emph{locally convex} $k$-graphs \cite{Raeburn} and \emph{finitely aligned} $k$-graphs \cite{Yeend}.

Let $\Lambda$ be any $k$-graph and $\lambda\in \Lambda$. Suppose $m,n\in \mathbb{N}^k$ are such that $0\le m\le n\le d(\lambda)$. Note that $d(\lambda)=m+(n-m)+(d(\lambda)-n)$ and thus by applying the unique factorization, we have unique paths $\mu_1,\mu_2,\mu_3\in \Lambda$ such that 

\begin{center}
    $d(\mu_1)=m,d(\mu_2)=n-m, d(\mu_3)=d(\lambda)-n$ and $\lambda=\mu_1\mu_2\mu_3$.
\end{center}

Following the standard convention, we denote the paths $\mu_1,\mu_2$ and $\mu_3$ as $\lambda(0,m),\lambda(m,n)$ and $\lambda(n,d(\lambda))$ respectively. 

In some parts of the article we need some notions and terminologies which are often used for finitely aligned $k$-graphs. Let $\lambda,\mu\in \Lambda$. A path $\tau\in \Lambda$ is called a \emph{minimal common extension} of $\lambda$ and $\mu$ if 

\begin{center}
    $d(\tau)=d(\lambda)\vee d(\mu)$, $\tau(0,d(\lambda))=\lambda$ and $\tau(0,d(\mu))=\mu$.
\end{center}
Define 
\begin{center}
$\MCE(\lambda,\mu):=\{\tau~|~\tau$ is a minimal common extension of $\lambda,\mu\}$   
\end{center}
and 
\begin{center}
$\Lambda^{\min}(\lambda,\mu):=\{(\alpha,\beta)~|~\lambda\alpha=\mu\beta\in~ \MCE(\lambda,\mu)\}$.
\end{center}
Also for any subset $E\subseteq \Lambda$ and $\lambda\in \Lambda$, we denote
\begin{center}
Ext$(\lambda;E):=\displaystyle{\bigcup_{\mu\in E}} \{\alpha~|~(\alpha,\beta)\in \Lambda^{\min}(\lambda,\mu)$ for some $\beta\in \Lambda\}$.
\end{center}
Using the unique factorization one can see that the sets $\MCE(\lambda,\mu)$ and $\Lambda^{\min}(\lambda,\mu)$ are in bijection. The $k$-graph is called \emph{finitely aligned} if $\MCE(\lambda,\mu)$ is finite for all $\lambda,\mu\in \Lambda$. It is not hard to observe that any row-finite $k$-graph is finitely aligned. However the converse is not true; for example see \cite{Clark}. 

\subsubsection{Aperiodicity and cofinality} We now briefly recall the notion of infinite paths in a $k$-graph. For two $k$-graphs $(\Lambda,d_1)$ and $(\Omega,d_2)$, a $k$-\emph{graph morphism} $f:\Lambda\longrightarrow \Omega$ is a degree preserving functor (i.e., $d_2(f(\lambda))=d_1(\lambda)$ for all $\lambda\in \Lambda$). Any $k$-graph morphism $x:\Omega_k\longrightarrow \Lambda$ is called an \emph{infinite path} and the set 
\begin{center}
    $\Lambda^\infty:=\{x:\Omega_k\longrightarrow \Lambda~|~x$ is a $k$-graph morphism$\}$
\end{center}
is called the \emph{infinite path space} of $\Lambda$. For each $m\in \mathbb{N}^k$, we denote $x((m,m))$ simply as $x(m)$. Thus $x(m)$ is a vertex in $\Lambda$. We define the range of $x$ as $r(x):=x(0)$. For $v\in \Lambda^0$, $v\Lambda^\infty:=\{x\in \Lambda^\infty~|~x(0)=v\}$. 

For $p\in \mathbb{N}^k$ and $x\in \Lambda^\infty$, the $p$-shifted infinite path $\sigma^p(x)$ is defined as:
\begin{center}
    $\sigma^p(x)((m,n)):=x((m+p,n+p))$ for all $(m,n)\in \Omega_k$.
\end{center}

The notion of \textit{aperiodicity} of a $k$-graph was first introduced in \cite{Kumjian-Pask} as an analogue of Condition $(L)$ used in the study of Cuntz-Krieger algebras of directed graphs \cite{KPR}. Aperiodicity is the main hypothesis for the Cuntz-Krieger uniqueness theorems for higher-rank graph $C^*$-algebras and Kumjian-Pask algebras and consequently it is used to characterize the simplicity of these algebras.

\begin{dfn}\label{def KP convention}   
Let $\Lambda$ be a row-finite $k$-graph without sources. An infinite path $x\in \Lambda^\infty$ is said to be \textit{periodic} if there exists $0\neq p\in \mathbb{Z}^k$ such that $\sigma^n (x)=\sigma^{n+p} (x)$ for all $n\in \mathbb{N}^k$ with $n+p\in \mathbb{N}^k$. It is called \textit{aperiodic} if it has no nonzero period i.e., $\sigma^m (x)=\sigma^n (x)$ for $m,n\in \mathbb{N}^k$ implies $m=n$. The $k$-graph $\Lambda$ is said to satisfy the aperiodicity condition $(A)$ (or $\Lambda$ is \emph{aperiodic}) if for all $v\in \Lambda^0$ there is an aperiodic infinite path $x$ with $r(x)=x(0)=v$.
\end{dfn}

The concept of aperiodicity can also be defined in terms of finite paths. 
\begin{dfn}\label{def RS convention}
(\cite{Robertson}, \textbf{Robertson-Sims finite path formulation})

Let $\Lambda$ be a row-finite $k$-graph without sources. Then $\Lambda$ is called aperiodic if for each $v\in \Lambda^0$ and each pair of distinct elements $m,n\in \mathbb{N}^k$, there is a path $\lambda\in v\Lambda$ such that $d(\lambda)\ge m\vee n$ and
\[
\lambda(m,m+d(\lambda)-(m\vee n))\neq \lambda(n,n+d(\lambda)-(m\vee n)).
\]
\end{dfn}

In 2010, Lewin and Sims \cite{Lewin} gave another definition of aperiodicity using finite paths in the general setting of a finitely aligned $k$-graph. 

\begin{dfn}\label{def LS convention}
(\cite{Lewin}, \textbf{Lewin-Sims finite path formulation})

Let $\Lambda$ be a finitely aligned $k$-graph. Then $\Lambda$ is called aperiodic if for every pair of distinct paths $\alpha,\beta\in \Lambda$ with $s(\alpha)=s(\beta)$, there exists $\tau\in s(\alpha)\Lambda$ such that $\MCE(\alpha\tau,\beta\tau)=\emptyset$ (or equivalently, $\Lambda^{\min}(\alpha\tau,\beta\tau)=\emptyset$).
\end{dfn}

In case of row-finite $k$-graphs with no sources, all the above aperiodicity conditions are equivalent. We find Lewin-Sims criterion more easier to work with in our context. 

There is also a higher-rank generalization of the so called condition $(K)$ of directed graphs, known as \emph{strong aperiodicity} \cite{Kang-Pask}. For this we need the notion of hereditary and saturated subsets in a $k$-graph. A subset $H\subseteq \Lambda^0$ is called \emph{hereditary} if $r(\lambda)\in H$ for $\lambda\in \Lambda$ implies $s(\lambda)\in H$; on the other hand $H$ is called \emph{saturated} if for $v\in \Lambda^0$ and $n\in \mathbb{N}^k$, $s(v\Lambda^n)\subseteq H$ implies $v\in H$. The set of all hereditary saturated subsets of $\Lambda^0$ is denoted as $\mathcal{H}(\Lambda)$ and it forms a lattice under the partial order of inclusion.

For any $H\in \mathcal{H}(\Lambda)$, the \emph{quotient $k$-graph} of $\Lambda$ by $H$ (see \cite{Pino}) is defined as a $k$-graph with underlying set 
\begin{center}
    $\Lambda/H:=\{\lambda\in H~|~s(\lambda)\notin H\}$
\end{center}
and set of objects
\begin{center}
    $(\Lambda/H)^0:=\Lambda^0\setminus H$.
\end{center}
The range, source and the degree maps are the restrictions of $r,s$ and $d$ to $\Lambda/H$. It is easy to see that $\Lambda/H$ is also a row-finite $k$-graph without sources. The $k$-graph is called \emph{strongly aperiodic} (see \cite{Kang-Pask}) if $\Lambda/H$ is aperiodic for every proper hereditary saturated subset $H$ of $\Lambda^0$.

Another notion which plays significant role in characterizing simplicity of higher-rank graph $C^*$-algebras and Kumjian-Pask algebras is \emph{cofinality}. A $k$-graph $\Lambda$ is said to be \textit{cofinal} (see \cite[Definition 4.7]{Kumjian-Pask}) if for every $x\in \Lambda^\infty$ and every $v\in \Lambda^0$, there exists $n\in \mathbb{N}^k$ with $v\Lambda x(n)\neq \emptyset$.

\subsubsection{Kumjian-Pask algebras}
In 2013, Aranda Pino et al. \cite{Pino} defined Kumjian-Pask algebras as a higher-rank analogue of Leavitt path algebras. It was first defined for row-finite $k$-graphs without sources and later generalized for locally convex $k$-graphs by Clark, Flynn and an Huef \cite{Flynn} and for finitely aligned $k$-graphs by Clark and Pangalela \cite{Clark}. 

Suppose $\Lambda$ is a row-finite $k$-graph. Let $\Lambda^{\neq 0}:=\{\lambda\in \Lambda~|~d(\lambda)\neq 0\}=\Lambda\setminus \Lambda^0$ and set $G(\Lambda^{\neq 0}):=\{\lambda^*~|~\lambda\in \Lambda^{\neq 0}\}$. The symbols $\lambda^*$ are called \emph{ghost paths} and accordingly $G(\Lambda^{\neq 0})$ is the set of ghost paths. 
\begin{dfn}\label{def KP family}
(\cite[Definition 3.1]{Pino}, \textbf{Kumjian-Pask $\Lambda$-family}) 

Let $\Lambda$ be a row-finite $k$-graph without sources and $R$ a unital commutative ring. A \emph{Kumjian-Pask $\Lambda$-family} $(P,S)$ in an $R$-algebra $A$ consists of two maps $P:\Lambda^0\longrightarrow A$ and $S:\Lambda^{\neq 0}\cup G(\Lambda^{\neq 0})\longrightarrow A$ (where images are written as subscripts) such that the following hold:

$(KP1)$ $P_uP_v=\delta_{u,v}P_u$ for all $u,v\in \Lambda^0$;

$(KP2)$ $S_\lambda S_\mu=S_{\lambda\mu}$, $S_{\mu^*}S_{\lambda^*}=S_{(\lambda\mu)^*}$ for all $\lambda,\mu\in \Lambda^{\neq 0}$ with $s(\lambda)=r(\mu)$ and

$P_{r(\lambda)}S_\lambda=S_\lambda=S_\lambda P_{s(\lambda)}$, $P_{s(\lambda)}S_{\lambda^*}=S_{\lambda^*}=S_{\lambda^*}P_{r(\lambda)}$ for all $\lambda\in \Lambda^{\neq 0}$;

$(KP3)$ $S_{\lambda^*} S_\mu=\delta_{\lambda,\mu}P_{s(\lambda)}$ for all $\lambda,\mu\in \Lambda^{\neq 0}$ with $d(\lambda)=d(\mu)$;

$(KP4)$ $P_v=\displaystyle{\sum_{\lambda\in v\Lambda^n}} S_\lambda S_{\lambda^*}$ for all $v\in \Lambda^0$ and for all $n\in \mathbb{N}^k$.
\end{dfn}

Given a row-finite $k$-graph $\Lambda$ with no sources and a unital commutative ring $R$, the authors in \cite[Theorem 3.4]{Pino} constructed an $R$-algebra $\KP_R(\Lambda)$ generated by a Kumjian-Pask $\Lambda$-family $(p,s)$ which satisfies the following universal property: whenever $(Q,T)$ is any Kumjian-Pask $\Lambda$-family in any $R$-algebra $A$, then there is a unique $R$-algebra homomorphism $\pi_{Q,T}:$ $\KP_R(\Lambda)\longrightarrow A$ such that 
\begin{center}
$\pi_{Q,T}(p_v)=Q_v$, $\pi_{Q,T}(s_\lambda)=T_\lambda$ and $\pi_{Q,T}(s_{\mu^*})=T_{\mu^*}$
\end{center}
for all $v\in \Lambda^0$ and $\lambda,\mu\in \Lambda^{\neq 0}$. 

Moreover the algebra $\KP_R(\Lambda)$ is equipped with a natural $\mathbb{Z}^k$-grading where the $n^{\text{th}}$ homogeneous component is defined as
\begin{center}
    $\KP_R(\Lambda)_n=\lspan_R(\{s_\lambda s_{\mu^*}~|~\lambda,\mu\in \Lambda$ with $s(\lambda)=s(\mu)$ and $d(\lambda)-d(\mu)=n\})$
\end{center}
for all $n\in \mathbb{Z}^k$. The $\mathbb{Z}^k$-graded $R$-algebra $\KP_R(\Lambda)$ is known as the \emph{Kumjian-Pask algebra} of the $k$-graph $\Lambda$.

\subsection{Commutative monoids}
We recall some basic concepts related with commutative monoids. 

Let $(M,+)$ be a commutative monoid with identity $0$. One can define a natural pre-order $\le$ on $M$ as follows:
\begin{center}
    $x\le y\Longleftrightarrow y=x+z$ for some $z\in M$.
\end{center}
This relation is usually called the \emph{algebraic pre-order} on $M$. One can easily see that $\le$ is compatible with the addition of $M$ i.e., if $x\le y$ and $x'\le y'$ then $x+x'\le y+y'$. A submonoid $N$ of $M$ is called an \emph{order ideal} if it is a down set with respect to the algebraic pre-order i.e., if $x\le y\in N$ implies $x\in N$. The set of all order ideals of $M$ is denoted as $\mathcal{L}^o(M)$ and it forms a \emph{lattice} under set inclusion where the meet is the usual intersection and the join of two order ideals $N_1,N_2$ is defined as
\begin{center}
    $N_1\vee N_2:=\overline{N_1+N_2}=\{x\in M~|~x\le y_1+y_2$ for some $y_1\in N_1$ and $y_2\in N_2\}$.
\end{center}
In connection with the algebraic pre-order $\le$ on $M$, we have the following list of definitions which can be found in \cite{Bosa,APW,Vadim,Wehrung}.
\begin{dfns}\label{def concepts on monoids w.r.t the preorder}
$(i)$ An element $u\in M$ is called a \emph{unit} if $u\le 0$. 

$(ii)$ $M$ is called \emph{conical} if $0$ is the only unit in $M$ (equivalently, if $M^*:=M\setminus \{0\}$ is a semigroup). 

$(iii)$ $M$ is called \emph{antisymmetric} if the relation $\le$ is antisymmetric.

$(iv)$ A nonunit $a\in M$ is called an \emph{atom} if $a=b+c$ implies either $b$ is a unit or $c$ is a unit in $M$.

$(v)$ The monoid $M$ is called \emph{atomic} if each nonunit in $M$ can be written as a finite sum of atoms.

$(vi)$ A nonunit $x\in M$ is said to have a \emph{unique factorization into atoms} if $x$ can be written as a sum of atoms and whenever 
\begin{center}
    $x=a_1+a_2+...+a_m=a_1'+a_2'+...+a_n'$
\end{center}
where $a_1,a_2,...,a_m$, $a_1',a_2',...,a_n'$ are atoms then $n=m$ and there is a permutation $\pi\in S_n$ such that
\begin{center}
$a_i\le a_{\pi(i)}'$ and $a_{\pi(i)}'\le a_i$ for all $i=1,2,...,n$.     
\end{center}

$(vii)$ $M$ is called \emph{unique factorization monoid (UFM)} if $M$ is atomic and every nonunit in $M$ has a unique factorization into atoms.

$(viii)$ An element $a\in M$ is called \emph{minimal} if $b\le a$ for $b\in M$ implies either $b=0$ or $b=a$.
\end{dfns}

A commutative monoid $M$ is called a \emph{refinement monoid} if for all $a,b,c,d\in M$ with $a+b=c+d$ there exist $z_1,z_2,z_3,z_4\in M$ such that 
\begin{center}
    $a=z_1+z_2$, $b=z_3+z_4$ and $c=z_1+z_3$, $d=z_2+z_4$.
\end{center}
$M$ is called a \emph{cancellative monoid} if $x+y=x+z$ for $x,y,z\in M$ implies $y=z$. 
\begin{rmk}\label{rem consequences in conical cancellative monoid}
In a conical monoid $M$, $0$ is a minimal element and $\{0\}$ is an order ideal. Moreover if $M$ is cancellative then it is antisymmetric and in this case atoms and nonzero minimal elements do coincide.    
\end{rmk}
Given a commutative monoid $M$ and a submonoid $N$, one can define a relation $\equiv_N$ on $M$ by 
\begin{center}
    $x\equiv_N y \Longleftrightarrow x+a=y+b$ for some $a,b\in N$
\end{center}
It is straightforward to check that $\equiv_N$ is a monoid congruence on $M$. We sometimes write the quotient monoid $M/\equiv_N$ as $M/N$. 

\subsubsection{$\Gamma$-monoid} 
Given a group $\Gamma$ and a commutative monoid $M$, we say that $M$ is a \emph{$\Gamma$-monoid} if $\Gamma \curvearrowright M$ ($\Gamma$ acts on $M$) by monoid automorphisms. Following the standard notation used in \cite{Cordeiro, Hazrat}, we denote the action of $\alpha\in \Gamma$ on $x\in M$ by $^{\alpha}x$. Note that the $\Gamma$-action is compatible with the algebraic pre-ordering in the sense that $x\le y$ implies $^{\alpha}x~\le ~^{\alpha}y$ for all $\alpha\in \Gamma$. 

If $M_1$ and $M_2$ are $\Gamma$-monoids then a \emph{$\Gamma$-monoid homomorphism} $f:M_1\longrightarrow M_2$ is a monoid homomorphism which respects the $\Gamma$-action i.e., $f(^{\alpha}x)=~^{\alpha}f(x)$ for all $\alpha\in \Gamma$ and $x\in M$. 

An order ideal $J$ of a $\Gamma$-monoid $M$ is said to be a \emph{$\Gamma$-order ideal} if $J$ is closed under the $\Gamma$-action. Equivalently a $\Gamma$-order ideal $J$ is a subset of $M$ such that for any $\alpha,\beta\in \Gamma$ and $x,y\in M$, $^{\alpha}x+~^{\beta}y\in J$ if and only if $x,y\in J$. We denote the lattice of all $\Gamma$-order ideals of $M$ by $\mathcal{L}(M)$. For a subset $A$ of $M$, $\langle A\rangle$ denotes the $\Gamma$-order ideal generated by $A$. A nonzero conical $\Gamma$-monoid $M$ is called \emph{simple} if $\{0\}$ and $M$ are the only $\Gamma$-order ideals of $M$. A nonzero element $a\in M$ is called \textit{periodic} if there exists $\alpha\neq 0 \in \Gamma$ such that $^{\alpha}a=a$, otherwise $a$ is called \emph{aperiodic}. We say that $\Gamma$ acts \emph{freely} on $M$ if there is no periodic element in $M$. 

A $\Gamma$-order ideal $P$ of $M$ is called \emph{prime} if for any two $\Gamma$-order ideals $J_1,J_2$ of $M$, $J_1\cap J_2\subseteq P$ implies either $J_1\subseteq P$ or $J_2\subseteq P$. 

A $\Gamma$-monoid $M$ is called \emph{finitely generated as a $\Gamma$-monoid} if there is a finite subset $A\subseteq M$ such that for any $x\in M$ there exist $\alpha_1,\alpha_2,...,\alpha_n\in \Gamma$ and $a_1,a_2,...,a_n\in A$ (not necessarily distinct) such that $x=\displaystyle{\sum_{i=1}^{n}} ~^{\alpha_i}a_i$. Clearly in this case $M=\langle A\rangle$.

\subsubsection{Talented monoid of a directed graph}

As we seek to cast the analogue of talented monoid for higher-rank graphs, we include its definition in the setting of directed graphs.

\begin{dfn}\label{def talented monoid}
(\cite{Hazrat,Cordeiro}) Let $E$ be any row-finite directed graph. The \emph{talented monoid} of $E$ is denoted as $T_E$ and is defined to be the commutative monoid generated by $\{v(i)~|~v\in E^0,i\in \mathbb{Z}\}$ subject to the relations:

\begin{center}
$v(i)=\displaystyle{\sum_{e\in s^{-1}(v)}} r(e)(i+1)$
\end{center}
for all vertices $v$ which are not sink and for all $i\in \mathbb{Z}$.
\end{dfn}
The monoid $T_E$ is a $\mathbb{Z}$-monoid with respect to the $\mathbb{Z}$-action defined on generators as: 
\begin{center}
    $^n v(i):=v(i+n)$.
\end{center}

This monoid can be seen as a graded version of the graph monoid $M_E$. To witness the beautiful relations of the talented monoid with the geometry of the underlying directed graph as well as with the algebraic properties of the associated Leavitt path algebra, the readers may go through \cite{Hazrat, Cordeiro}.

\subsection{Groupoid and Type semigroup}
If $\Lambda$ is a $k$-graph then using the degree functor $d$ together with the fact that $\mathbb{N}^k$ is a conical monoid, one can verify that the only arrows of $\Lambda$ which are invertible are the units i.e., no $\lambda\in \Lambda\setminus \Lambda^0$ is invertible. In the world of small categories, \emph{groupoids} sit completely at the other corner. A \emph{groupoid} is a small category consisting of isomorphisms only. We assume that the readers are familiar with the basic notions of topological groupoids.

\subsubsection{The path groupoid of a $k$-graph}\label{ss infinite path groupoid}
Let $\Lambda$ be a row-finite $k$-graph with no sources. In \cite{Kumjian-Pask}, the authors defined a groupoid $\mathcal{G}_\Lambda$ to obtain a groupoid model for the higher-rank graph $C^*$-algebra $C^*(\Lambda)$. We briefly describe the construction.

Define 
\begin{center}
    $\mathcal{G}_\Lambda:=\{(x,n-m,y)\in \Lambda^\infty \times \mathbb{Z}^k \times \Lambda^\infty~|~n,m\in \mathbb{N}^k$ and $\sigma^n (x)=\sigma^m(y)\}$.
\end{center}
The unit space $\mathcal{G}_\Lambda^{(0)}:=\Lambda^\infty$; the range and source maps are defined as $r((x,n,y)):=x$ and $s((x,n,y)):=y$. Two arrows $(x,n,y)$ and $(z,m,w)$ are composable if $y=z$ and in that case
\begin{center}
    $(x,n,y)(y,m,w):=(x,n+m,w)$.
\end{center}
The inverse of an arrow $(x,n,y)$ is defined as $(x,n,y)^{-1}:=(y,-n,x)$. Then $\mathcal{G}_\Lambda$ is a groupoid which is known as the \emph{(infinite) path groupoid} of $\Lambda$. 

For $\lambda,\mu\in \Lambda$ with $s(\lambda)=s(\mu)$ define
\begin{center}
    $Z(\lambda,\mu):=\{(\lambda z, d(\lambda)-d(\mu), \mu z)~|~z\in s(\lambda)\Lambda^\infty\}$.
\end{center}
Then $\mathcal{B}:=\{Z(\lambda,\mu)~|~ \lambda,\mu\in \Lambda, s(\lambda)=s(\mu)\}$ is a basis for a locally compact, Hausdorff topology on $\mathcal{G}_\Lambda$. Moreover each $Z(\lambda,\mu)$ is a compact open bisection which makes $\mathcal{G}_\Lambda$ an ample groupoid. If we identify $x\in \Lambda^\infty$ with the arrow $(x,0,x)$ then the sets $Z(\lambda):=Z(\lambda,\lambda)$ form a basis of compact open sets for the subspace topology on $\mathcal{G}_\Lambda^{(0)}=\Lambda^\infty$. 

If $\Lambda=E^*$ (the path category of a row-finite directed graph $E$ with no sink), then $\mathcal{G}_{E^*}\cong \mathcal{G}_E$, the boundary path groupoid of $E$. Both the groupoids $\mathcal{G}_E$ and $\mathcal{G}_\Lambda$ are prototypes of the so-called \emph{Deaconu-Renault groupoids} \cite{Deaconu}.

\subsubsection{Type semigroup}
Let $S$ be an inverse semigroup with $0$. Two elements $x,y\in S$ are said to be \emph{orthogonal} if $x^*y=xy^*=0$. If $x,y$ are orthogonal then we write $x\perp y$. The semigroup $S$ is called a \emph{Boolean inverse semigroup} if the semilattice of idempotents $E(S)$ is a Boolean ring (equivalently a \emph{generalized Boolean algebra}) such that any two orthogonal elements $x,y$ have a \emph{join} which is denoted by $x\oplus y$. 
\begin{dfn}\label{def type semigroup}
(\cite{Bosa,Cordeiro,Wehrung}) Let $S$ be a Boolean inverse semigroup. For each $x\in E(S)$ we set a symbol $\typ(x)$. The \emph{type semigroup} or \emph{type monoid} of $S$ is denoted as $\Typ(S)$ and defined as the commutative monoid freely generated by the symbols $\typ(x)$, where $x\in E(S)$ subject to the following relations:

$(i)$ $\typ(0)=0$;

$(ii)$ $\typ(x)=\typ(y)$ whenever there exists $s\in S$ such that $x=s^*s$ and $y=ss^*$;

$(iii)$ $\typ(x\oplus y)=\typ(x)+\typ(y)$ whenever $x,y\in E(S)$ are such that $x\perp y$ and $x\oplus y\in E(S)$.
\end{dfn}

An interesting fact about type monoid is that it is always a conical refinement monoid (see \cite[Corollary 4.1.4]{Wehrung}). Let $\mathcal{G}$ be a (not necessarily Hausdorff) ample groupoid. We denote $\mathcal{G}^a$ to be the collection of all compact open bisections in $\mathcal{G}$. Then $\mathcal{G}^a$ is an inverse semigroup with $0$ (the empty bisection) where the operations are defined as:
\begin{center}
    $AB:=\{\alpha\beta~|~(\alpha,\beta)\in \mathcal{G}^{(2)}\cap (A\times B)\}$

    $A^{-1}:=\{\alpha^{-1}~|~\alpha\in A\}$
\end{center}
for all $A,B\in \mathcal{G}^a$.

The natural partial order on this inverse semigroup becomes the usual set inclusion. The semilattice of idempotents $E(\mathcal{G}^a)$ coincides with the set of all compact open subsets of the unit space $\mathcal{G}^{(0)}$ and it is a Boolean ring where sum and product are respectively the symmetric difference and the intersection. Moreover if $A,B\in \mathcal{G}^a$ are such that $A\perp B$ (that means $A^{-1}B=AB^{-1}=\emptyset$) then $A\cap B=\emptyset$ and in this case $A\oplus B:=A \sqcup B$ is the join of $A,B$. This shows that $\mathcal{G}^a$ is a Boolean inverse semigroup. One of the important examples of type semigroups is the \emph{type semigroup of $\mathcal{G}$} which is defined as $\Typ(\mathcal{G}):=\Typ(\mathcal{G}^a)$.

Type semigroup of an ample groupoid is defined in some other ways also, for example see \cite[Definition 5.3]{Bonicke} and \cite[Definition 5.4]{Rainone}. However all these definitions are equivalent. We briefly recall B\"{o}nicke and Li's definition from \cite{Bonicke} for its use in this article. 

Suppose $\mathcal{G}$ is an ample groupoid. Define a relation $\sim$ on 
\begin{center}
    $\left\{\displaystyle{\bigcup_{i=1}^{n}}~ (A_i\times \{i\})~|~n\in \mathbb{N},A_i\in E(\mathcal{G}^a)\right\}$
\end{center}
as follows: for two such elements $A=\displaystyle{\bigcup_{i=1}^{n}}~ (A_i\times \{i\})$ and $B=\displaystyle{\bigcup_{j=1}^{m}}~ (B_j\times \{j\})$, $A\sim B$ if there exist $l\in \mathbb{N}$, $W_1,W_2,...,W_l\in \mathcal{G}^a$ and $n_1,n_2,...,n_l,m_1,m_2,...,m_l\in \mathbb{N}$ such that 
\begin{center}
   $A=\displaystyle{\bigsqcup_{k=1}^{l}} ~s(W_k)\times \{n_k\}$ and $B=\displaystyle{\bigsqcup_{k=1}^{l}} ~r(W_k)\times \{m_k\}$. 
\end{center}
It is an equivalence relation as shown in \cite[Lemma 5.2]{Bonicke}. The \emph{type semigroup} of $\mathcal{G}$ is defined as the set 
\begin{center}
    $S(\mathcal{G},\mathcal{G}^a):=\left\{\displaystyle{\bigcup_{i=1}^{n}}~ (A_i\times \{i\})~|~n\in \mathbb{N},A_i\in E(\mathcal{G}^a)\right\}/\sim$,
\end{center}
where the operation is defined as 
\begin{center}
$\left[\displaystyle{\bigcup_{i=1}^{n}}~ (A_i\times \{i\})\right]+\left[\displaystyle{\bigcup_{j=1}^{m}}~ (B_j\times \{j\})\right]:=\left[\displaystyle{\bigcup_{i=1}^{n}}~ (A_i\times \{i\}) \cup \displaystyle{\bigcup_{j=1}^{m}}~ (B_j\times \{n+j\})\right]$.
\end{center}
In \cite[Proposition 7.3]{Bosa}, it was shown that $\Typ(\mathcal{G})\cong S(\mathcal{G},\mathcal{G}^a)$.

\subsubsection{Skew-product groupoid and graded type semigroup}
Let $\Gamma$ be a discrete commutative group and $\mathcal{G}$ ample Hausdorff groupoid which is $\Gamma$-graded by a cocycle $c:\mathcal{G}\longrightarrow \Gamma$. The \emph{skew-product} of $\mathcal{G}$ by $\Gamma$ (see \cite[Definition 1.6]{Renault}) is denoted as $\mathcal{G}\times_c \Gamma$ and it is a groupoid with set of arrows $\mathcal{G}\times \Gamma$ and unit space $\mathcal{G}^{(0)}\times \Gamma$. For $(x,\alpha)\in \mathcal{G}\times_c \Gamma$ define $r((x,\alpha)):=(r(x),\alpha)$ and $s((x,\alpha)):=(s(x),\alpha c(x))$. Two arrows $(x,\alpha),(y,\beta)$ are composable if $(x,y)\in \mathcal{G}^{(2)}$ and $\beta=\alpha c(x)$; in this case $(x,\alpha)(y,\beta):=(xy,\alpha)$. The inverse is defined as $(x,\alpha)^{-1}:=(x^{-1},\alpha c(x))$. 

Note that $\mathcal{G}\times_c \Gamma$ is also an ample Hausdorff groupoid with respect to the product topology. The semilattice of idempotent $E((\mathcal{G}\times_c \Gamma)^a)$ is the set of all compact open subsets of $\mathcal{G}^{(0)}\times \Gamma$. Since $\mathcal{G}^{(0)}\times \Gamma$ is Hausdorff, one can write each such compact open subset as a disjoint union of sets of the form $U\times \{\alpha\}$ where $U$ is a compact open subset of $\mathcal{G}^{(0)}$ and $\alpha\in \Gamma$.
\begin{dfn}\label{def graded type semigroup}
(\cite[Definition 3.3]{Cordeiro}) Let $\Gamma$ be a discrete commutative group and $\mathcal{G}$ a $\Gamma$-graded ample Hausdorff groupoid. The \emph{graded type semigroup} of $\mathcal{G}$ is defined as
\begin{center}
    $\Typ^{\gr}(\mathcal{G}):=\Typ(\mathcal{G}\times_c \Gamma)=\Typ((\mathcal{G}\times_c \Gamma)^a)$.
\end{center}
\end{dfn}
The graded type semigroup $\Typ^{\gr}(\mathcal{G})$ is canonically a $\Gamma$-monoid where the $\Gamma$-action is defined on the generators by
\begin{center}
    $^{\gamma}\typ(U\times \{\alpha\}):=\typ(U\times \{\gamma\alpha\})$
\end{center}
and extended linearly. In this paper we denote the graded type semigroup of $\mathcal{G}$ by $T_{\mathcal{G}}$ and call it the \emph{talented monoid} of $\mathcal{G}$. It is established in \cite{Cordeiro} that the talented monoid of the graph groupoid $\mathcal{G}_E$ of a directed graph $E$ is same as the talented monoid of the graph $E$.

\section{The talented monoid of a higher-rank graph}\label{sec the talented monoid}
In this section we first seek a higher-rank graph analogue of the refinement monoid associated to a directed graph in the context of Leavitt path algebras~\cite{Abrams-Monograph, Ara}. In the context of directed graphs, this monoid is defined as the quotient of the free commutative monoid on the vertices of the graph  by relations that each vertex $v$ is equal to the the sum of vertices $v$ connects to (equation (M) in \cite{Ara})
which resembles the Cuntz-Krieger relation. To adapt this definition to the context of higher-rank graphs we must first alter the sum to be over  those vertices connecting \emph{to} $v$, second we must create a sum for those vertices which connect to $v$ \emph{by paths of each dimension}, in analogy with (KP4) of Definition \ref{def KP family}. 

Throughout this section we fix $\Lambda$ to be a row-finite $k$-graph.

\subsection{The $k$-graph monoid}
Let $\mathbb{F}_{_\Lambda}:=\mathbb{N}\Lambda^0$, the free commutative monoid generated by $\Lambda^0$. For each $i=1,2,..., k$ we define a binary relation $\longrightarrow_i$ on $\mathbb{F}_{_\Lambda}\setminus \{0\}$ as follows:

For $\displaystyle{\sum_{j=1}^{m}} x_j\in \mathbb{F}_{_\Lambda}$ and for some $t\in \{1,2,...,m\}$,

\begin{center}
    $\displaystyle{\sum_{j=1}^{m}} x_j\longrightarrow_i \displaystyle{\sum_{\substack{1\le j \le m\\ j\neq t}}} x_j +\mathfrak{s}^i(x_t)$ if $x_t\Lambda^{e_i}\neq \emptyset$
\end{center}
\begin{center}
$\mathfrak{s}^i(x_t):=\displaystyle{\sum_{\lambda\in x_t \Lambda^{e_i}}} s(\lambda)$. 
\end{center}
Now we define a binary relation $\longrightarrow$ on $\mathbb{F}_{_\Lambda}\setminus \{0\}$ by $\alpha\longrightarrow \beta$ if either $\alpha=\beta$ or there exists a finite string 
\begin{center}
    $\alpha=\alpha_0\longrightarrow_{i_1}\alpha_1\longrightarrow_{i_2}\alpha_2\longrightarrow_{i_3}\cdot\cdot\cdot\longrightarrow_{i_N}\alpha_N=\beta$
\end{center}
where $\alpha_j\in \mathbb{F}_{_\Lambda}\setminus \{0\}$ and $i_j\in \{1,2,...,k\}$ for all $j=1,2,...,N$.

Let $\sim$ be the $0$-restricted congruence on $\mathbb{F}_{_\Lambda}$ generated by the $k$-relations $\longrightarrow_i$ or equivalently by the single relation $\longrightarrow$. We define the \emph{$k$-graph monoid} $M_\Lambda:=\mathbb{F}_{_\Lambda}/\sim$. Since $\sim$ is $0$-restricted, it follows that $M_\Lambda$ is a conical monoid.

The following lemma is a $k$-graph analogue of \cite[Lemma 3.6.6]{Abrams-Monograph} and \cite[Lemma 4.2]{Ara}.
\begin{lem}\label{lem the splitting lemma}
Let $\alpha,\beta,\alpha_1,\alpha_2\in \mathbb{F}_{_\Lambda}\setminus \{0\}$ be such that $\alpha=\alpha_1+\alpha_2$ and $\alpha\longrightarrow \beta$. Then there exist $\beta_1,\beta_2\in \mathbb{F}_{_\Lambda}\setminus \{0\}$ such that $\beta=\beta_1+\beta_2$ and $\alpha_i\longrightarrow \beta_i$ for $i=1,2$.    
\end{lem}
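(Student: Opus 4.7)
The plan is to prove the lemma by induction on the length $N$ of a reduction string witnessing $\alpha \longrightarrow \beta$. The base case $N=0$ is trivial: $\alpha = \beta$, so take $\beta_1 := \alpha_1$ and $\beta_2 := \alpha_2$. The inductive step reduces at once to proving the single-step version: if $\alpha = \alpha_1 + \alpha_2$ in $\mathbb{F}_\Lambda \setminus \{0\}$ and $\alpha \longrightarrow_i \gamma$ for some $i \in \{1,\dots,k\}$, then $\gamma$ splits as $\gamma = \gamma_1 + \gamma_2$ with $\gamma_j \in \mathbb{F}_\Lambda \setminus \{0\}$ and $\alpha_j \longrightarrow \gamma_j$ for $j = 1,2$.

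For the single-step claim, write $\alpha = \sum_{j=1}^m x_j$ and assume the relation $\alpha \longrightarrow_i \gamma$ is realized by replacing the term $x_t$ (with $x_t\Lambda^{e_i} \neq \emptyset$) by $\mathfrak{s}^i(x_t)$. Since $\mathbb{F}_\Lambda$ is free commutative on $\Lambda^0$, the decomposition $\alpha = \alpha_1 + \alpha_2$ corresponds to partitioning the multiset of vertex-summands of $\alpha$ into two multisets giving $\alpha_1$ and $\alpha_2$. The distinguished summand $x_t$ lies in exactly one of the two; relabeling if necessary, assume it belongs to $\alpha_1$, so that we can write $\alpha_1 = \alpha_1' + x_t$ for some $\alpha_1' \in \mathbb{F}_\Lambda$ (possibly zero). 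Then set
\[
\gamma_1 := \alpha_1' + \mathfrak{s}^i(x_t), \qquad \gamma_2 := \alpha_2.
\]
Both are nonzero: $\gamma_2 = \alpha_2 \neq 0$ by hypothesis, and $\gamma_1 \geq \mathfrak{s}^i(x_t) \neq 0$ because $x_t\Lambda^{e_i}$ is nonempty (so the defining sum for $\mathfrak{s}^i(x_t)$ has at least one term). A direct comparison of summands gives $\gamma_1 + \gamma_2 = \alpha_1' + \mathfrak{s}^i(x_t) + \alpha_2 = \gamma$. For the relations, $\alpha_1 = \alpha_1' + x_t \longrightarrow_i \alpha_1' + \mathfrak{s}^i(x_t) = \gamma_1$ (when $\alpha_1' = 0$ this is still a valid single-step relation on $x_t$), while $\alpha_2 = \gamma_2$ gives $\alpha_2 \longrightarrow \gamma_2$ trivially.

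Having established the single step, the induction is straightforward: given the string $\alpha = \alpha_0 \longrightarrow_{i_1} \alpha_0^{(1)} \longrightarrow_{i_2} \cdots \longrightarrow_{i_N} \alpha_0^{(N)} = \beta$ and the decomposition $\alpha_0 = \alpha_1 + \alpha_2$, apply the single-step claim to the first arrow to produce $\alpha_0^{(1)} = \alpha_1^{(1)} + \alpha_2^{(1)}$ with $\alpha_j \longrightarrow \alpha_j^{(1)}$ and both $\alpha_j^{(1)}$ nonzero. Then invoke the inductive hypothesis (with string length $N-1$) on $\alpha_0^{(1)} = \alpha_1^{(1)} + \alpha_2^{(1)}$ and $\alpha_0^{(1)} \longrightarrow \beta$, yielding $\beta = \beta_1 + \beta_2$ with $\alpha_j^{(1)} \longrightarrow \beta_j$. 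Composing reductions gives $\alpha_j \longrightarrow \beta_j$ as required.

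I do not expect a real obstacle here; the only subtlety is bookkeeping. The single-step argument relies on selecting a consistent representation of $\alpha_1, \alpha_2$ as multisets of vertices so that the distinguished occurrence of $x_t$ can be placed unambiguously in one of the two summands, and on verifying nontriviality of the resulting $\gamma_1, \gamma_2$, which is where the row-finite, no-source setup is irrelevant—only the nonemptiness of $x_t\Lambda^{e_i}$ (built into the definition of $\longrightarrow_i$) is used.
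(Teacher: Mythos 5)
Your proposal is correct and follows essentially the same route as the paper's proof: reduce to a single-step relation $\longrightarrow_i$, place the distinguished summand $x_t$ into whichever of $\alpha_1,\alpha_2$ contains it, and set $\beta_1=(\alpha_1-x_t)+\mathfrak{s}^i(x_t)$, $\beta_2=\alpha_2$. Your version is slightly more careful than the paper's (you verify nonzeroness of the pieces and correctly note that $\alpha_2\longrightarrow\beta_2$ holds only via reflexivity, where the paper loosely writes $\alpha_2\longrightarrow_i\beta_2$), but the argument is the same.
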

\begin{proof}
Since $\longrightarrow$ is the reflexive and transitive closure of the relations $\longrightarrow_i$, $i=1,2,...,k$; it suffices to prove the result where $\alpha\longrightarrow_i \beta$ for some $i\in \{1,2,...,k\}$. Since $\alpha \longrightarrow_i \beta$ then there is an element $v \in \Lambda^0$ in the support of $\alpha$ such that $\beta = ( \alpha - v ) + \mathfrak{s}^i(v)$. The element $v$ either belongs to the support of $\alpha_1$ or $\alpha_2$. Assume, for instance that the element $v$ belongs to the support of $\alpha_1$. We set $\beta_1 = ( \alpha_1 - v ) + \mathfrak{s}^i(v)$ and $\beta_2 = \alpha_2$. Then $\alpha_1\longrightarrow_i \beta_1$ and $\alpha_2\longrightarrow_i \beta_2$.
\end{proof}
It is interesting to know whether there is a $k$-graph $\Lambda$ whose monoid $M_\Lambda$ can not be realized as the graph monoid of any directed graph $E$. We provide an example below.
\begin{example}\label{ex k-graph monoid which is not a graph monoid}
Consider $\Lambda$ to be the $3$-graph with the following $1$-skeleton:

\[
\begin{tikzpicture}[scale=0.8]
\node[inner sep=2.5pt, circle] (27) at (0,0) {$x$};
\node[inner sep=2.5pt, circle] (28) at (0,2) {$w$};	
\node[inner sep=2.5pt, circle] (29) at (2,0) {$v$};
\node[inner sep=2.5pt, circle] (30) at (-1.5,-1.5) {$u$};

\path[->, red, dashed, >=latex,thick] (29) edge [left] node {} (27);
\path[->, blue, >=latex, thick] (28) edge [below] node {} (27);
\path[->, red, dashed, >=latex,thick] (30) edge [bend left] node {} (27);
\path[->, blue, >=latex,thick] (30) edge [bend right] node {} (27);
\path[->,dotted, >=latex,thick] (30) edge [] node {} (27);
\end{tikzpicture}
\]
\noindent 
where solid blue edges are of degree $(1,0,0)$, dashed red edges are of degree $(0,1,0)$,  and the dotted black edge has degree $(0,0,1)$. We have the following relations in $M_\Lambda$.
\begin{align*}
    x= & ~u ~(\text{since}~x\longrightarrow_3 u);\\
    x= & ~u+v ~(\text{since}~x\longrightarrow_2 u+v);\\
    x= & ~u+w ~(\text{since}~x\longrightarrow_1 u+w).
\end{align*}
\noindent
Effectively we have three generators and  $M_\Lambda=\langle u,v,w~|~u=u+v=u+w\rangle$. This is the monoid $M_0$ constructed by Ara et al. in \cite{APW}. It follows that $M_\Lambda$ is a finitely generated, commutative refinement monoid but it is not the graph monoid of any row-finite directed graph; for details see \cite[\S 3, \S 4 and Lemma 4.1]{APW}. 
\end{example}
\begin{rmk}\label{rem confluence may not follow}
Note that in the $3$-graph of above example, the vertices $u,v,w$ are sources. So if there is some $z\in \mathbb{F}_{_\Lambda}\setminus \{0\}$ such that $u+v\longrightarrow z$ and $u+w\longrightarrow z$, then we must have $z=u+v=u+w$ in $\mathbb{F}_{_\Lambda}\setminus \{0\}$ which is not possible in any free commutative monoid. Hence there is no such $z$, although we have $u+v\sim u+w$ in $\mathbb{F}_{_\Lambda}\setminus \{0\}$.     
\end{rmk}

The above remark says that unlike graph monoids of row-finite directed graphs, we may not have confluence property in the $k$-graph monoid for a row-finite $k$-graph which has sources. Therefore, for the remaining part of this section, we consider row-finite $k$-graphs without sources. In order to prove that we have certain confluence property in the monoid $M_\Lambda$, we need the following fact about the binary operation $\longrightarrow$ defined above. 

\begin{lem}\label{lem mini confluence}
Let $\Lambda$ be a row-finite $k$-graph with no sources. For any $v\in \Lambda^0$ and $i,j\in \{1,2,...,k\}$,  we have the following confluence 
\begin{equation*}
\xymatrix{
&   \mathfrak{s}^{i}(v) \ar[dr] & \\
 v \ar[ur] \ar[dr] & & \mathfrak{s}^{e_i+e_j}(v)\\
& \mathfrak{s}^{j}(v) \ar[ur] & 
}
\end{equation*}
where $\mathfrak{s}^{e_i+e_j}(v):=\displaystyle{\sum_{\lambda\in v\Lambda^{e_i+e_j}}} s(\lambda)$.
\end{lem}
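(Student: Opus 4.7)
The plan is to build both reduction paths explicitly, using row-finiteness, the no-sources hypothesis, and the unique factorization property of the $k$-graph. I first would note that since $\Lambda$ has no sources, $v\Lambda^{e_i}\neq\emptyset$, so the single step $v\longrightarrow_i \mathfrak{s}^i(v)$ is legitimate (and likewise with $j$). This handles the two left-hand arrows of the diagram.

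For the arrow $\mathfrak{s}^i(v)\longrightarrow \mathfrak{s}^{e_i+e_j}(v)$, I would enumerate $v\Lambda^{e_i}=\{\alpha_1,\ldots,\alpha_n\}$ (finite by row-finiteness) so that $\mathfrak{s}^i(v)=\sum_{p=1}^{n}s(\alpha_p)$. Since each $s(\alpha_p)$ lies in $\Lambda^0$ and $\Lambda$ has no sources, one may apply $\longrightarrow_j$ successively to each summand in turn:
\[
\mathfrak{s}^i(v)\longrightarrow_j \mathfrak{s}^j(s(\alpha_1))+\sum_{p=2}^{n}s(\alpha_p)\longrightarrow_j\cdots\longrightarrow_j \sum_{p=1}^{n}\mathfrak{s}^j(s(\alpha_p)).
\]
Hence $\mathfrak{s}^i(v)\longrightarrow \sum_{p=1}^{n}\mathfrak{s}^j(s(\alpha_p))$. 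The key identification is then the equality
\[
\sum_{\alpha\in v\Lambda^{e_i}}\mathfrak{s}^j(s(\alpha)) \;=\; \sum_{\alpha\in v\Lambda^{e_i}}\sum_{\beta\in s(\alpha)\Lambda^{e_j}}s(\beta)\;=\;\sum_{\tau\in v\Lambda^{e_i+e_j}}s(\tau)\;=\;\mathfrak{s}^{e_i+e_j}(v),
\]
whose middle step is exactly the unique factorization bijection $v\Lambda^{e_i+e_j}\longleftrightarrow \{(\alpha,\beta)\mid \alpha\in v\Lambda^{e_i},\,\beta\in s(\alpha)\Lambda^{e_j}\}$ applied to the decomposition $e_i+e_j=e_i+e_j$.

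The symmetric argument (starting with $v\Lambda^{e_j}=\{\beta_1,\ldots,\beta_m\}$ and applying $\longrightarrow_i$ to each $s(\beta_q)$) yields $\mathfrak{s}^j(v)\longrightarrow \sum_{\beta\in v\Lambda^{e_j}}\mathfrak{s}^i(s(\beta))$, which by the dual factorization $e_i+e_j=e_j+e_i$ also equals $\mathfrak{s}^{e_i+e_j}(v)$. This completes the diagram. The case $i=j$ is handled by the same argument with $e_i+e_j=2e_i$, requiring only that paths of degree $2e_i$ factor uniquely as a concatenation of two paths of degree $e_i$. There is no genuine obstacle here: the proof is really a bookkeeping exercise, and the only thing one must be slightly careful about is that each intermediate step of the successive $\longrightarrow_j$ (or $\longrightarrow_i$) reductions is well-defined, which is ensured precisely by the absence of sources.
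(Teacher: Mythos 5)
Your proposal is correct and follows essentially the same route as the paper: both arguments reduce $\mathfrak{s}^i(v)$ term by term via $\longrightarrow_j$ (legitimate by row-finiteness and the no-sources hypothesis) and then identify the resulting double sum with $\mathfrak{s}^{e_i+e_j}(v)$ through the unique-factorization bijection between $v\Lambda^{e_i+e_j}$ and pairs $(\alpha,\beta)$ with $\alpha\in v\Lambda^{e_i}$, $\beta\in s(\alpha)\Lambda^{e_j}$. The only cosmetic difference is that the paper records the identity for general degrees $m,n$ (its equation for later reuse in Lemma 3.6), whereas you state it only for $e_i,e_j$; your explicit remark on the $i=j$ case is a harmless addition.
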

\begin{proof}
Clearly $v\longrightarrow_i \mathfrak{s}^i (v)$ and $v\longrightarrow_j \mathfrak{s}^j (v)$. We just need to show that $\mathfrak{s}^i (v), \mathfrak{s}^j(v)\longrightarrow \displaystyle{\sum_{\lambda\in v\Lambda^{e_i+e_j}}} s(\lambda)$. We first show that in the free commutative monoid $\mathbb{F}_{_\Lambda}$,

\begin{equation}\label{timefleet}
\sum_{\alpha \in v \Lambda^{m}}  \sum_{\beta \in s(\alpha) \Lambda^{n}} s(\beta)  = \sum_{\beta \in v \Lambda^{n}}  \sum_{\alpha \in s(\beta) \Lambda^{m}} s(\alpha) = \sum_{\alpha \in v \Lambda^{m+n}} s(\alpha).   
\end{equation}

We have 
\begin{equation*}
\sum_{\alpha \in v \Lambda^{m}}  \sum_{\beta \in s(\alpha) \Lambda^{n}} s(\beta) = 
\sum_{\alpha \in v \Lambda^{m}}  \sum_{\beta \in s(\alpha) \Lambda^{n}} s(\alpha \beta) =
\sum_{\alpha \beta\in A} s(\alpha \beta), 
\end{equation*}
where $A=\{ \alpha \beta \mid d(\alpha)=m, d(\beta)=n, r(\alpha)=v, s(\alpha)=r(\beta)  \}$. The latter equality follows from the fact that $d$ has the unique factorisation property and thus $\alpha \beta \not = \alpha' \beta'$ when $\alpha \not = \alpha'$. Then for $\alpha \beta \in A$, writing $d(\alpha \beta)=m+n=n+m$, the factorisation gives that $\alpha \beta = \beta' \alpha'$, where $\beta' \alpha' \in B$, where $B=\{ \beta \alpha \mid d(\beta)=n, d(\alpha)=m, r(\beta)=v, s(\beta)=r(\alpha) \}$. It follows that $A=B$. Therefore 
\[\sum_{\alpha \beta\in A} s(\alpha \beta) = \sum_{\beta \alpha\in B} s(\beta \alpha) = \sum_{\beta \in v \Lambda^{n}}  \sum_{\alpha \in s(\beta) \Lambda^{m}} s(\beta \alpha) =  \sum_{\beta \in v \Lambda^{n}}  \sum_{\alpha \in s(\beta) \Lambda^{m}} s(\alpha).\]

Furthermore setting  $C=\{ \alpha \mid d(\alpha)=m+n, r(\alpha)=v  \}$, factorisation guarantees that $A=B=C$ and the last equality of (\ref{timefleet}) follows. 

Now using (\ref{timefleet}) we have,
\begin{center}
    $\mathfrak{s}^i(v)=\displaystyle{\sum_{\alpha\in v\Lambda^{e_i}}} s(\alpha)\longrightarrow \sum_{\alpha \in v \Lambda^{e_i}}  \sum_{\beta \in s(\alpha)\Lambda^{e_j}} s(\beta)=  \sum_{\lambda \in v \Lambda^{e_i+e_j}} s(\lambda)$.
\end{center}

Similarly one can show that $\mathfrak{s}^j(v)\longrightarrow \displaystyle{\sum_{\lambda\in v\Lambda^{e_i+e_j}}} s(\lambda)$ and the lemma follows.
\end{proof}

Now we are ready to prove our desired Confluence lemma for the $k$-graph monoids of row-finite $k$-graphs without sources.
\begin{lem}\label{lem the confluence lemma}
(\textbf{Confluence lemma}) Let $\Lambda$ be a row-finite $k$-graph without sources. Let $\alpha,\beta\in \mathbb{F}_{_\Lambda}\setminus \{0\}$. Then $\alpha\sim \beta$ if and only if there exists $\gamma\in \mathbb{F}_{_\Lambda}\setminus \{0\}$ such that $\alpha\longrightarrow \gamma$ and $\beta \longrightarrow \gamma$.  
\end{lem}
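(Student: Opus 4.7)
The backward direction is immediate since $\longrightarrow\,\subseteq\,\sim$: if $\alpha\longrightarrow\gamma$ and $\beta\longrightarrow\gamma$, then $\alpha\sim\gamma\sim\beta$. For the forward direction my plan is first to prove that the reduction $\longrightarrow$ is \emph{confluent} on $\mathbb{F}_{_\Lambda}\setminus\{0\}$, i.e., any two reductions $\alpha\longrightarrow\beta_1$ and $\alpha\longrightarrow\beta_2$ admit a common continuation. Once confluence is in hand, the joinability relation $\alpha\equiv\beta:\iff(\exists\gamma)[\alpha\longrightarrow\gamma \text{ and } \beta\longrightarrow\gamma]$ (extended by $0\equiv 0$) is readily checked to be a $0$-restricted monoid congruence on $\mathbb{F}_{_\Lambda}$ containing $\longrightarrow$, so by minimality $\sim\,\subseteq\,\equiv$, which is the desired conclusion.

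To prove confluence I introduce, for each $m\in\mathbb{N}^k$, the \emph{canonical level-$m$ expansion}
\begin{center}
$\alpha^{[m]}:=\displaystyle\sum_{v\in\alpha}\sum_{\lambda\in v\Lambda^{m}}s(\lambda)$,
\end{center}
where the outer sum runs over summands of $\alpha$ counted with multiplicity. Iterating the identity~(\ref{timefleet}) from the proof of Lemma~\ref{lem mini confluence}, induction on $|m|=m_1+\cdots+m_k$ gives $\alpha\longrightarrow\alpha^{[m]}$ and $\alpha^{[m]}\longrightarrow\alpha^{[m+n]}$ for all $m,n\in\mathbb{N}^k$; each expansion step is legal precisely because $\Lambda$ has no sources. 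Confluence then reduces to the central claim: \emph{for every $\alpha\longrightarrow\beta$ there exists $m\in\mathbb{N}^k$ such that $\beta\longrightarrow\alpha^{[m]}$.} Given this, if $\alpha\longrightarrow\beta_j$ with witness $m_j$ for $j=1,2$, then $\beta_j\longrightarrow\alpha^{[m_j]}\longrightarrow\alpha^{[m_1\vee m_2]}$, so $\alpha^{[m_1\vee m_2]}$ is the common continuation.

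I propose to prove the central claim by lifting a reduction from the vertex-multiset level to the \emph{path}-multiset level. To a sequence $\alpha=\gamma_0\longrightarrow_{i_1}\cdots\longrightarrow_{i_N}\gamma_N=\beta$ I associate multisets $\Pi_n\subseteq\Lambda$ with $\Pi_0$ consisting of one identity path for each copy of each summand of $\alpha$; at step $n$, if the reduction replaces a summand $v=s(\lambda)$ for some chosen $\lambda\in\Pi_{n-1}$, then $\Pi_n=(\Pi_{n-1}\setminus\{\lambda\})\cup\{\lambda\mu:\mu\in s(\lambda)\Lambda^{e_{i_n}}\}$. Induction on $n$ will establish the two invariants: (i) $\gamma_n=\sum_{\lambda\in\Pi_n}s(\lambda)$, and (ii) for each individual copy of each summand $u$ of $\alpha$, the sub-multiset of $\Pi_n$ descending from that copy is a finite \emph{complete antichain} in $u\Lambda$ -- no element is a prefix of another, and for $m$ coordinatewise above the maximum degree of its elements, every path in $u\Lambda^{m}$ has a unique prefix in the sub-multiset. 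The antichain half uses only the unique factorisation property: two elements $\lambda,\rho$ descend from distinct children $\nu',\nu''$ of their lowest common ancestor $\nu$, and any extension $\lambda\mu$ has $\nu'$ (not $\nu''$) as its $d(\nu')$-prefix, so $\rho$ cannot prefix $\lambda\mu$. Completeness is built in by construction. Setting $m:=\bigvee_{\lambda\in\Pi_N}d(\lambda)$, the no-sources hypothesis permits expanding each $\lambda\in\Pi_N$ by $m-d(\lambda)$, and unique factorisation plus completeness identifies the resulting path-multiset, copy-by-copy, with the full level-$m$ fan $u\Lambda^m$ of each summand; summing sources delivers $\beta\longrightarrow\alpha^{[m]}$.

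The joinability step is then routine: reflexivity, symmetry, and $\longrightarrow$-containment of $\equiv$ are immediate; transitivity uses confluence of $\longrightarrow$ applied at the middle term; additivity follows because an $\longrightarrow_i$-step acts on a single summand and remains valid after adjoining any $\delta$. I expect the main obstacle to be maintaining the complete-antichain invariant (ii) and correctly matching multiplicities across different copies of the same summand -- this is precisely the point at which the no-sources assumption becomes indispensable, since without it the extensions required by the antichain argument may fail to exist and confluence genuinely fails (compare Remark~\ref{rem confluence may not follow}).
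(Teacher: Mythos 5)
Your proposal is correct, but it takes a genuinely different route from the paper. The paper proves the forward direction by induction on the length of the zig-zag $\alpha=\alpha_0,\dots,\alpha_n=\beta$ witnessing $\alpha\sim\beta$: it applies the induction hypothesis to obtain a common reduct $\lambda$ of $\alpha$ and $\alpha_{n-1}$, splits $\lambda$ via Lemma~\ref{lem the splitting lemma}, and then fills the remaining diamond by iterating the local confluence of Lemma~\ref{lem mini confluence} along the chain from $x$ to $\lambda(x)$. You instead establish the stronger structural fact that the canonical expansions $\alpha^{[m]}$ form a family that is cofinal among all reducts of $\alpha$ (every $\beta$ with $\alpha\longrightarrow\beta$ satisfies $\beta\longrightarrow\alpha^{[m]}$ for some $m$), deduce confluence from directedness of $\{\alpha^{[m]}\}_{m\in\mathbb{N}^k}$, and then obtain $\sim\,\subseteq\,\equiv$ abstractly by checking that joinability is a $0$-restricted congruence containing the generating relations. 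Your central claim is proved by lifting reductions to multisets of paths and maintaining the complete-antichain invariant via unique factorisation; I checked the two invariants and the final identification of the expanded antichain with the full fan $u\Lambda^m$, and they go through (the maximum degree of $\Pi_n$ is non-decreasing in $n$, so the completeness hypothesis ``$m$ above all degrees'' propagates correctly). What your approach buys is a cleaner, fully explicit replacement for the paper's ``using Lemma~\ref{lem mini confluence} repeatedly and tactfully'' step, plus a normal-form statement that subsumes Lemma~\ref{lem two sum equivalent} and is essentially the same prefix-uniqueness mechanism the paper later deploys inside the injectivity argument of Theorem~\ref{th isomorphism between talented monoid and type monoid}; what the paper's approach buys is a shorter induction that stays entirely at the level of monoid elements and parallels the classical confluence proof for graph monoids. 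Both arguments use the no-sources hypothesis at exactly the same point, namely to guarantee that the required expansions exist.
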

\begin{proof}
The sufficiency is clear from the definition of the relation $\sim$. So assume that $\alpha\sim \beta$. Then there exists a finite sequence $\alpha=\alpha_0,\alpha_1,...,\alpha_n=\beta$ in $\mathbb{F}_{_\Lambda}\setminus \{0\}$ such that for each $t=0,1,...,n-1$ either $\alpha_t\longrightarrow_i \alpha_{t+1}$ or $\alpha_{t+1}\longrightarrow_j \alpha_t$ for some $i,j\in \{1,2,...,k\}$. We use induction on the length of the sequence. If $n=0$ then $\alpha=\beta$ then taking $\gamma=\alpha$ we are done. Assume that the result is true for any sequence of length $n-1$. Then applying the induction hypothesis on the sequence $\alpha=\alpha_0,\alpha_1,...\alpha_{n-1}$, we can argue that there exists $\lambda\in \mathbb{F}_{_\Lambda}\setminus \{0\}$ such that $\alpha\longrightarrow \lambda$ and $\alpha_{n-1}\longrightarrow \lambda$. The following cases may appear. 

\textit{Case-1:} If $\beta=\alpha_n\longrightarrow_j\alpha_{n-1}$ for some $j\in \{1,2,...,k\}$ then $\beta\longrightarrow \lambda$ and taking $\gamma=\lambda$ we are done.

\textit{Case-2:} Suppose $\alpha_{n-1}\longrightarrow_i \alpha_n=\beta$ for some $i\in \{1,2,...,k\}$. Since $\alpha_{n-1}\neq 0$, there exists $x\in \Lambda^0$ in the basic representation of $\alpha_{n-1}$ such that $\alpha_{n-1}=x+\alpha_{n-1}'$ and $\beta=\alpha_{n-1}'+\mathfrak{s}^i(x)$. Since $\alpha_{n-1}\longrightarrow \lambda$ so by Lemma \ref{lem the splitting lemma}, we can write $\lambda=\lambda(x)+\lambda'$ where $x\longrightarrow \lambda(x)$ and $\alpha_{n-1}'\longrightarrow \lambda'$.

\textit{Subcase-1:} $x=\lambda(x)$. Define $\gamma:=\mathfrak{s}^i(x)+\lambda'$. Then \[\alpha\longrightarrow \lambda=\lambda(x)+\lambda'\longrightarrow_i \mathfrak{s}^i(x)+\lambda'=\gamma\] and \[\beta=\alpha_{n-1}'+\mathfrak{s}^i(x)\longrightarrow \lambda'+\mathfrak{s}^i(x)=\gamma.\]

\textit{Subcase-2:} $x\neq \lambda(x)$. Suppose we have a sequence of length $m\ge 1$
\begin{center}
    $x\longrightarrow_{j_1}\eta_1\longrightarrow_{j_2}\eta_2\longrightarrow_{j_3}\cdot\cdot\cdot\longrightarrow_{j_m}\eta_m=\lambda(x)$.
\end{center}
If $j_1=i$ then clearly $\eta_1=\mathfrak{s}^i(x)$ and so $\mathfrak{s}^i(x)\longrightarrow \lambda(x)$. Therefore $\beta\longrightarrow \lambda(x)+\lambda'=\lambda$ and we are done. So assume that $j_1\neq i$. Using Lemma \ref{lem mini confluence} repeatedly and tactfully, we can obtain elements $\xi_1,\xi_2,...,\xi_m\in \mathbb{F}_{_\Lambda}\setminus \{0\}$ such that $\mathfrak{s}^i(x),\eta_1\longrightarrow \xi_1$ and $\xi_i,\eta_{i+1}\longrightarrow \xi_{i+1}$ for all $i=1,2,...,m-1$. Finally let $\gamma=\xi_m+\lambda'$. Then \[\alpha\longrightarrow \lambda=\lambda(x)+\lambda'\longrightarrow \xi_m+\lambda'=\gamma\] and \[\beta=\alpha_{n-1}'+\mathfrak{s}^i(x)\longrightarrow \lambda'+\xi_m=\gamma.\] 

\end{proof}
Before we give another realization of the $k$-graph monoid $M_\Lambda$, we need one more lemma. 
\begin{lem}\label{lem two sum equivalent}
Let $\Lambda$ be a row-finite $k$-graph without sources. Let $u\in \Lambda^0$ and $n\in \mathbb{N}^k$. Then in $\mathbb{F}_{_\Lambda}\setminus \{0\}$, $u\longrightarrow \displaystyle{\sum_{\lambda\in u\Lambda^n}} s(\lambda)$ and so $\displaystyle{\sum_{\lambda\in u\Lambda^n}} s(\lambda)~\sim \displaystyle{\sum_{\mu\in u\Lambda^m}} s(\mu)$ for all $n,m\in \mathbb{N}^k$.
\end{lem}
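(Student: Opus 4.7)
The plan is to prove the first assertion $u\longrightarrow \sum_{\lambda\in u\Lambda^n}s(\lambda)$ by induction on $|n|:=n_1+n_2+\cdots+n_k$, and then derive the second claim immediately by transitivity of $\sim$.

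For the base case $|n|=0$, we have $n=0$, and $u\Lambda^0=\{u\}$, so $\sum_{\lambda\in u\Lambda^0}s(\lambda)=u$ and the statement is trivial from reflexivity of $\longrightarrow$. For the inductive step, suppose the claim holds for every $n'\in\mathbb{N}^k$ with $|n'|<|n|$. Pick any coordinate $i$ with $n_i\ge 1$ and write $n=n'+e_i$. By the inductive hypothesis,
\[
u\longrightarrow \sum_{\alpha\in u\Lambda^{n'}} s(\alpha).
\]
Now, since $\Lambda$ has no sources, every vertex $s(\alpha)$ appearing in the right-hand side satisfies $s(\alpha)\Lambda^{e_i}\neq\emptyset$, so we may apply the relation $\longrightarrow_i$ to each summand in turn. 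Iterating this for each $\alpha\in u\Lambda^{n'}$ (noting that $u\Lambda^{n'}$ is finite by row-finiteness and that repeated summands are handled one occurrence at a time) yields
\[
\sum_{\alpha\in u\Lambda^{n'}} s(\alpha)\ \longrightarrow\ \sum_{\alpha\in u\Lambda^{n'}}\mathfrak{s}^i(s(\alpha))\ =\ \sum_{\alpha\in u\Lambda^{n'}}\sum_{\beta\in s(\alpha)\Lambda^{e_i}} s(\beta).
\]
Applying identity \eqref{timefleet} from the proof of Lemma \ref{lem mini confluence} (with $m=n'$ and $n=e_i$), the right-hand side coincides in $\mathbb{F}_\Lambda$ with $\sum_{\gamma\in u\Lambda^{n}} s(\gamma)$. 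Combining the two chains via transitivity of $\longrightarrow$ gives $u\longrightarrow \sum_{\gamma\in u\Lambda^n}s(\gamma)$, as required.

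For the second assertion, we now know that for \emph{any} $n\in\mathbb{N}^k$ the element $\sum_{\lambda\in u\Lambda^n}s(\lambda)$ is reachable from $u$ via $\longrightarrow$. In particular both $\sum_{\lambda\in u\Lambda^n}s(\lambda)$ and $\sum_{\mu\in u\Lambda^m}s(\mu)$ lie in the $\sim$-class of $u$, so they are $\sim$-equivalent to each other.

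The only delicate point to verify carefully is the iterative application of $\longrightarrow_i$ to the sum $\sum_{\alpha\in u\Lambda^{n'}}s(\alpha)$: since $\longrightarrow_i$ rewrites one summand at a time and the underlying monoid is the free commutative monoid on $\Lambda^0$, one must keep track of occurrences (not of distinct vertices), but this is routine thanks to row-finiteness. The key algebraic input is the commutativity of paths of different degrees, packaged as identity \eqref{timefleet}, which is the only place where the unique factorization property of $\Lambda$ is used.
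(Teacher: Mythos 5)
Your proof is correct and follows essentially the same route as the paper: induction on $n_1+\cdots+n_k$, applying $\longrightarrow_i$ to each summand of $\sum_{\alpha\in u\Lambda^{n'}}s(\alpha)$ and then invoking identity (\ref{timefleet}) to collapse the double sum. The second assertion is derived exactly as in the paper, from both sums being reachable from $u$.
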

\begin{proof}
Let $n=(n_1,\dots,n_k)$. We argue by induction on $l(n):= n_1+\dots+n_k$.  If $l(n)=1$, then $n=e_i$ for some $1\leq i \leq k$. Then $v\longrightarrow_i \mathfrak{s}^i(u)=\displaystyle{\sum_{\alpha\in u\Lambda^{e_i}}} s(\alpha)$ follows from the definition of $\longrightarrow_i$. Suppose $l(n)=l>1$.  Then $n = n' + e_j$ for some $n'\in \mathbb N^k$ with $l(n')=l-1$ and some $1\leq j \leq k$. By induction 
\[u\longrightarrow\sum_{\alpha \in u \Lambda^{n'}} s(\alpha).\] 
Since $\Lambda$ is row-finite, there is a finite number of $s(\alpha)$ appearing in the above sum. Applying the binary operation $\longrightarrow_j$ on each of the these vertices, we have  $s(\alpha)\longrightarrow_j \displaystyle{\sum_{\beta \in s(\alpha)\Lambda^{e_j}} }s(\beta)$. Finally using Equation (\ref{timefleet}), we have 
\[u\longrightarrow\sum_{\alpha \in u\Lambda^{n'}} s(\alpha) \longrightarrow  \sum_{\alpha \in u \Lambda^{n'}}  \sum_{\beta \in s(\alpha)\Lambda^{e_j}} s(\beta)=  \sum_{\lambda \in u \Lambda^{n}} s(\lambda).\]
\end{proof}
Applying the above lemma, it is now easy to prove the following:

\begin{prop}\label{pro another realization of k-graph monoid}
Let $\Lambda$ be a row-finite $k$-graph without sources. Then $k$-graph monoid $M_\Lambda$ is the quotient of the free commutative monoid $\mathbb{N}\Lambda^0$ modulo the congruence generated by the set of relations
$$\Big \{(u,\displaystyle{\sum_{\lambda\in u\Lambda^n}} s(\lambda))~|~u\in \Lambda^0,n\in \mathbb{N}^k\Big\}.$$ 
\end{prop}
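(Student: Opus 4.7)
The plan is to show that the congruence $\sim$ on $\mathbb{F}_\Lambda=\mathbb{N}\Lambda^0$ used in defining $M_\Lambda$ coincides with the congruence $\approx$ generated by the relation set $R:=\{(u,\sum_{\lambda\in u\Lambda^n}s(\lambda))\mid u\in\Lambda^0,\,n\in\mathbb{N}^k\}$. Since both $\sim$ and $\approx$ are $0$-restricted congruences on the same free commutative monoid, it suffices to verify mutual containment of the generating sets (up to reflexive/symmetric/transitive closure and compatibility with addition).

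First I would check the inclusion $\sim\ \subseteq\ \approx$. By construction, $\sim$ is generated by the relations $\longrightarrow_i$, each of which replaces some summand $v$ in the support of an element by $\mathfrak{s}^i(v)=\sum_{\lambda\in v\Lambda^{e_i}}s(\lambda)$. Because $\Lambda$ has no sources, $v\Lambda^{e_i}\neq\emptyset$ automatically, so these are precisely the generators of $\approx$ in the special case $n=e_i$. Hence every generator of $\sim$ is a generator of $\approx$, and compatibility with monoid addition gives $\sim\,\subseteq\,\approx$.

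The reverse inclusion $\approx\ \subseteq\ \sim$ is exactly what Lemma~\ref{lem two sum equivalent} was set up to deliver. That lemma asserts that $u\longrightarrow\sum_{\lambda\in u\Lambda^n}s(\lambda)$ in $\mathbb{F}_\Lambda\setminus\{0\}$ for every $u\in\Lambda^0$ and every $n\in\mathbb{N}^k$, proved by induction on $|n|=n_1+\cdots+n_k$, using the base case (definition of $\longrightarrow_i$) and the factorisation identity (\ref{timefleet}) for the inductive step. Since $\longrightarrow$ is by definition contained in $\sim$, each generator $(u,\sum_{\lambda\in u\Lambda^n}s(\lambda))$ of $\approx$ already lies in $\sim$, and compatibility with addition yields $\approx\,\subseteq\,\sim$.

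Combining the two inclusions gives $\sim\,=\,\approx$, whence $M_\Lambda=\mathbb{F}_\Lambda/\sim$ is the quotient of $\mathbb{N}\Lambda^0$ by the congruence generated by $R$, as claimed. There is no substantive obstacle here: all the real work has been discharged by Lemma~\ref{lem two sum equivalent} (which in turn relied on the unique factorisation property via identity (\ref{timefleet})), so the proof reduces to an essentially bookkeeping argument comparing two sets of generators.
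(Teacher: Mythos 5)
Your proof is correct and takes exactly the route the paper intends: the paper merely remarks that the proposition follows by ``applying the above lemma,'' i.e.\ Lemma~\ref{lem two sum equivalent}, and your two-inclusion comparison of the generating relations of the two congruences is precisely the routine verification being elided. The one inclusion uses that the no-sources hypothesis makes each $\longrightarrow_i$ an instance of the relation with $n=e_i$, and the other is supplied by Lemma~\ref{lem two sum equivalent}; both steps are sound.
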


\begin{example}\label{ghghgd}
(\cite[Example 5.7]{Kang-Pask})
Suppose that $\Lambda$ is the following 2-graph, with single vertex $v$ and the $1$-skeleton shown below.
\vspace{-1cm}

\[
\begin{tikzpicture}[scale=0.35]

\node[circle,draw,fill=black,inner sep=0.5pt] (p11) at (1, 1) {$.$} 
edge[-latex, blue, thick,loop, out=40, in=-40, min distance=80, looseness=1.5] (p11)
edge[-latex, blue,thick,loop, out=50, in=-50, min distance=123, looseness=2.2] (p11)
edge[-latex, blue,thick, loop, out=60, in=-60, min distance=190, looseness=2.3] (p11)
edge[-latex, red,thick, loop, dashed, out=130, in=230, min distance=75, looseness=1.5] (p11)
edge[-latex, red,thick, loop, dashed, out=125, in=235, min distance=115, looseness=1.8] (p11)
edge[-latex, red, thick, loop, dashed, out=115, in=245, min distance=190, looseness=1.7] (p11);
                                                               
\node at (0.9, -0.5) {$v$};
\node at (-5, 1) {$g_1,g_2, g_3$};
\node at (7,1) {$f_1,f_2,f_3$};
\end{tikzpicture}
\]
\vspace{-1cm}
\noindent

The factorization rules are given as
\begin{equation*} 
\begin{array}{ll}
g_j f_1= f_2 g_j,  \quad g_j f_2 = f_1 g_j,  \quad g_j f_3 = f_3 g_j & \text{ for } j=1,3 \text{ and } \\
g_2 f_i = f_i g_2  & \text{ for } i=1,2,3.
\end{array}
\end{equation*}
\indent
where $f_i \in \Lambda^{e_1}$ and $g_j \in \Lambda^{e_2}$ for $i,j=1,2,3$.

Then we have the transformations \[v\longrightarrow_2 {\color{red} v}+ {\color{red} v} + {\color{red} v}\longrightarrow_1 {\color{red} v} + {\color{red} v}  +( {\color{blue} v}+ {\color{blue} v} + {\color{blue} v}),\] 

which can be seen pictorially as:
\begin{center}
\begin{tikzpicture}[grow=right]
\node {$v$} 
    child {node {$\color{red} v$}}
    child {node {$ \color{red} v$}}
    child {node {$\color{red} v$}
        child {node {$\color{blue} v$}}
        child {node {$\color{blue} v$}}
        child {node {$\color{blue} v$}}
         } ;
\end{tikzpicture}
\end{center}

Then the $k$-graph monoid is the cyclic monoid   \[M_\Lambda = \frac{\langle v \rangle}{  \langle v = 3v , v = 3v  \rangle} = \{0,v, 2v \}.\]
\end{example}

\subsection{The talented monoid of a $k$-graph}\label{ss introducing the monoid}
Let $\Lambda$ be a row-finite $k$-graph. We now define the talented monoid of $\Lambda$. 
\begin{dfn}\label{def higher rank talented monoid}
The \textit{talented monoid} $T_\Lambda$ of the higher-rank graph $\Lambda$ is the quotient of the free commutative monoid generated by $\{v(n)~|~v\in \Lambda^0, n\in \mathbb{Z}^k\}$ modulo the congruence defined by the following relations:
\begin{equation}\label{TM equation}
    v(n)=\displaystyle{\sum_{\alpha\in v\Lambda^{e_i}}} s(\alpha)(n+e_i)
\end{equation}
for each $v\in \Lambda^0$, $n\in \mathbb{Z}^k$ and $i\in \{1,2,...,k\}$ with $v\Lambda^{e_i}\neq \emptyset$.
\end{dfn}
\begin{rmks}\label{rem connection with talented monoid}
$(i)$ The defining relations (\ref{TM equation}) of $T_\Lambda$ mimic the relations used to define the talented monoid $T_E$ (see Definition \ref{def talented monoid}) of a directed graph $E$ in \cite{Hazrat}. However in (\ref{TM equation}), instead of ranges of paths (as used in $T_E$), we use sources keeping in mind the categorical nature of higher-rank graphs.

$(ii)$ If the $k$-graph $\Lambda$ has no sources then the defining relations (\ref{TM equation}) are equivalent to the more general relations:
\begin{equation}\label{general relation}
    v(n)=\displaystyle{\sum_{\alpha\in v\Lambda^m}} s(\alpha)(n+m)
\end{equation}
for all $v\in \Lambda^0$ and $m\in \mathbb{N}^k$ with $v\Lambda^m\neq \emptyset$.
\end{rmks}

For $v(n)\in T_\Lambda$, the $n$ in the parenthesis is called the \textit{state} of $v(n)$. We often denote $v(0)$ simply as $v$. The monoid $T_\Lambda$ is equipped with a natural $\mathbb{Z}^k$-monoid structure with respect to the action of $\mathbb{Z}^k$ on generators given by state shift as follows:

\begin{center}
$^n v(m):=v(m+n)$ for all $n\in \mathbb{Z}^k$
\end{center}
and then extended linearly. This action is clearly compatible with the algebraic pre-ordering on $T_\Lambda$. This also distinguishes $T_\Lambda$ from the $k$-graph monoid $M_\Lambda$ and throughout the rest of the paper this action is of utmost interest to us.

To understand the effect of the relations (\ref{general relation}), we compute the talented monoid of a simple looking $2$-graph in the following example.
\begin{example}\label{ex talented monoid of f^*(C_n)}
Suppose $C_4$ is the cycle of length $4$ and $\Lambda:=f^*(C_4^*)$, the $2$-graph constructed using the technique of Example \ref{ex higher-rank graphs} $(v)$. The following figure represents the $1$-skeleton of $\Lambda$.

\[
\begin{tikzpicture}[scale=1.2]
\node[inner sep=1.5pt, circle] (A) at (0,1) {$u$};	
\node[inner sep=1.5pt, circle] (B) at (-1,0) {$z$};	
\node[inner sep=1.5pt, circle] (C) at (0,-1) {$w$};
\node[inner sep=1.5pt, circle] (D) at (1,0) {$v$};	
	
\path[->, red, dashed, >=latex,thick] (A) edge [bend left] node[]{} (B);
\path[->, red, dashed, >=latex,thick] (B) edge [bend left] node[]{} (C);
\path[->, red, dashed, >=latex,thick] (C) edge [bend left] node[]{} (D);
\path[->, red, dashed, >=latex,thick] (D) edge [bend left] node[]{} (A);

\path[->,blue, >=latex,thick] (A) edge [bend right] node[]{} (B);
\path[->,blue, >=latex,thick] (B) edge [bend right] node[]{} (C);
\path[->,blue, >=latex,thick] (C) edge [bend right] node[]{} (D);
\path[->,blue, >=latex,thick] (D) edge [bend right] node[]{} (A);

\end{tikzpicture}
\]
The solid blue edges have degree $(1,0)$ while the dashed red edges are of degree $(0,1)$. Using the defining relations of $T_\Lambda$ we have,
\begin{align*}
    z &=u(e_1)=u(e_2)\\
    w &=u(2e_1)=u(e_1+e_2)=u(2e_2)\\
    v &=u(3e_1)=u(2e_1+e_2)=u(e_1+2e_2)=u(3e_2)\\
    u &=u(4e_1)=u(3e_1+e_2)=u(2e_1+2e_2)=u(e_1+3e_2)=u(4e_2).\\
\end{align*}
It follows that the vertices $v,w,z$ all lie in the $\mathbb{Z}^2$-orbit of $u$. A simple computation yields that for any $n=(n_1,n_2)\in \mathbb{Z}^2$, $u(n)=u(xe_1+ye_2)$ where $x,y\in \{0,1,2,3\}$ are such that $n_1\equiv x$ (mod $4$) and $n_2\equiv y$ (mod $4$). Using the above relations we further write $u(n)=u(Ne_1)$ where $N\in \{0,1,2,3\}$ and $(x+y)\equiv N$ (mod $4$). The talented monoid $T_\Lambda$ is therefore the commutative monoid freely generated by $u,u(e_1),u(2e_1)$ and $u(3e_1)$ or equivalently $T_\Lambda\cong \mathbb{N}\oplus \mathbb{N}\oplus \mathbb{N}\oplus \mathbb{N}$, where the $\mathbb{Z}^2$-action on the later is given by cyclic permutation more precisely, for any $n=(n_1,n_2)\in \mathbb{Z}^2$ and $(a,b,c,d)\in \mathbb{N}\oplus \mathbb{N}\oplus \mathbb{N}\oplus \mathbb{N}$, $^n (a,b,c,d)$ is the $4$-tuple obtained by permuting the entries $N$-times cyclically where $(n_1+n_2) \equiv N$ (mod $4$). It follows that $T_\Lambda$ and $T_{C_4}$ (see \cite[Example 2.4]{Hazrat}) are isomorphic as commutative monoids.   
\end{example}

\begin{rmk}\label{rem motivation towards aperiocity characterization}
In the $2$-graph $\Lambda$ of Example \ref{ex talented monoid of f^*(C_n)}, every infinite path $x$ is periodic with period $(1,-1)$ and so $\Lambda$ is not aperiodic (see Definition \ref{def KP convention}). On the other hand $\mathbb{Z}^2$ does not act freely on $T_\Lambda$ in fact $^{(1,0)}a=~^{(0,1)}a$ for any $a\in T_\Lambda$. This connection is not a mere coincidence as we prove in Theorem \ref{th Equivalence for aperiodicity in UF} of Section \ref{sec aperiodicity and KP algebra}, that when $T_\Lambda$ is atomic, then $\Lambda$ is aperiodic if and only if every element $a\in T_\Lambda$ has trivial isotropy under the $\mathbb{Z}^k$-action.
\end{rmk}

As the path category of a directed graph is simply a $1$-graph, the following connection is quite expected.
\begin{prop}\label{pro the obvious connection}
Let $E$ be a row-finite directed graph and $E^*$ the path category of $E$, considered as a $1$-graph. Then $T_{E^*}\cong T_E$.
\end{prop}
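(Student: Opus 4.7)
The plan is to show that both monoids are built from the same generators by the same relations once one unwinds the identification of the directed graph $E$ with its path category $E^*$ viewed as a $1$-graph. First I would make explicit the dictionary between the two pictures: the objects $(E^*)^0$ are literally $E^0$, the degree-one morphisms $(E^*)^{e_1}$ are in natural bijection with the edge set $E^1$, and under this bijection the category-theoretic range/source maps of $E^*$ correspond to the source/range maps of $E$. Concretely, for each vertex $v \in E^0$ one obtains
\[
v(E^*)^{e_1} \;=\; \{e \in E^1 : s_E(e) = v\} \;=\; s_E^{-1}(v),
\]
and for an edge $e$, $s_{E^*}(e) = r_E(e)$. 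In particular $v$ is a sink in $E$ (i.e.\ $s_E^{-1}(v) = \emptyset$) precisely when $v(E^*)^{e_1} = \emptyset$.

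With this dictionary in hand, the defining relations of the two monoids agree on the nose. The talented monoid $T_{E^*}$ from Definition~\ref{def higher rank talented monoid} (taking $k=1$) is the commutative monoid on $\{v(n) : v \in E^0,\, n \in \mathbb{Z}\}$ modulo
\[
v(n) \;=\; \sum_{\alpha \in v(E^*)^{e_1}} s_{E^*}(\alpha)(n+1) \quad \text{whenever } v(E^*)^{e_1} \ne \emptyset,
\]
and substituting the identifications above converts this into
\[
v(n) \;=\; \sum_{e \in s_E^{-1}(v)} r_E(e)(n+1) \quad \text{whenever } v \text{ is not a sink},
\]
which is exactly the defining relation of $T_E$ from Definition~\ref{def talented monoid}. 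I would therefore define $\phi \colon T_E \to T_{E^*}$ on generators by $\phi(v(i)) := v(i)$ and use the universal property of a monoid presentation to conclude that $\phi$ is a well-defined monoid homomorphism; the same recipe in the opposite direction gives an inverse $\psi$, and the composites are the identity on generators, so $\phi$ is an isomorphism of commutative monoids. Finally the $\mathbb{Z}$-equivariance is immediate since the action on either side is the state shift $v(i) \mapsto v(i+n)$ and $\phi$ commutes with this by construction. There is no real obstacle: the only point that demands care is keeping the two conventions straight (the $k$-graph relation sums sources of paths into $v$, while $T_E$ sums ranges of edges out of $v$), but this is precisely what the identification of $E$ with the $1$-graph $E^*$ is set up to reconcile.
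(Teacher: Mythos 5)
Your proposal is correct and follows exactly the paper's argument: the paper likewise observes that the relations of $T_{E^*}$ are those of $T_E$ after accounting for the reversal of edge direction in passing from $E$ to $E^*$, and concludes that the canonical map $v(i)\longmapsto v(i)$ extends to an isomorphism. You simply spell out the dictionary (range/source swap, sinks corresponding to $v(E^*)^{e_1}=\emptyset$) that the paper leaves as ``straightforward.''
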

\begin{proof}
As the defining relations of $T_{E^*}$ are modified in accordance with the reversal of edge direction in the passage from $E$ to $E^*$, it is straightforward to see that the canonical map $v(i)\longmapsto v(i)$; $v\in E^0$, $i\in \mathbb{Z}$ when extended linearly gives a well-defined isomorphism of monoids between $T_E$ and $T_{E^*}$.
\end{proof}

For $k$-graph $\Lambda$ without sources, the monoid $T_\Lambda$ has appeared in the literature before but not in its present form. In \cite{Ara-Hazrat}, the authors considered a directed system $(D_n,~\phi_{m,n})$ where for each $n\in \mathbb{Z}^k$, $D_n:=\mathbb{N}\Lambda^0$ (the free commutative monoid generated by $\Lambda^0$) and for all $m,n\in \mathbb{Z}^k$ with $m\le n$ the connecting morphism $\phi_{m,n}:D_m\longrightarrow D_n$ is defined on the generators by 
\begin{center}
    $\phi_{m,n}(v):=\displaystyle{\sum_{w\in \Lambda^0}} |v\Lambda^{n-m}w| w=\displaystyle{\sum_{\lambda\in v\Lambda^{n-m}}} s(\lambda)$ for all $v\in \Lambda^0$.
\end{center}
The direct limit of the above directed system was shown to be isomorphic with the graded $\mathcal{V}$-monoid of the Kumjian-Pask algebra $\KP_\mathsf{k}(\Lambda)$ (see \cite[Corollary 6.7]{Ara-Hazrat}). We obtain the following proposition which says that $T_\Lambda$ is disguised as the directed limit of the above directed system.
\begin{prop}\label{pro realizing as a direct limit}
Let $\Lambda$ be a row-finite $k$-graph without sources. Then 

$(i)$ $T_\Lambda= \displaystyle{\varinjlim_{\mathbb{Z}^k}} (D_n,\phi_{m,n})$;

$(ii)$ the monoid $T_\Lambda$ is cancellative. 
\end{prop}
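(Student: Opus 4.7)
The plan for part (i) is to exhibit mutually inverse $\mathbb{Z}^k$-monoid homomorphisms between $T_\Lambda$ and the direct limit $L := \varinjlim_{\mathbb{Z}^k}(D_n, \phi_{m,n})$. Let $\iota_n : D_n \to L$ denote the canonical structural maps. First, I define a map $\psi$ from the free commutative monoid on $\{v(n) : v \in \Lambda^0, n \in \mathbb{Z}^k\}$ to $L$ by $\psi(v(n)) := \iota_n(v)$, extended additively. To see that $\psi$ descends to $T_\Lambda$, it suffices to check the defining relation (\ref{TM equation}) is respected: for each generator $v(n)$ with $v \Lambda^{e_i} \neq \emptyset$, we have $\psi\bigl(\sum_{\alpha \in v\Lambda^{e_i}} s(\alpha)(n+e_i)\bigr) = \iota_{n+e_i}\bigl(\sum_\alpha s(\alpha)\bigr) = \iota_{n+e_i}(\phi_{n, n+e_i}(v)) = \iota_n(v) = \psi(v(n))$, using the compatibility $\iota_{n+e_i} \circ \phi_{n, n+e_i} = \iota_n$ in the direct limit.

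Conversely, I use the universal property of the direct limit. On each $D_n$, define $\eta_n : D_n \to T_\Lambda$ on generators by $\eta_n(v) := v(n)$ and extend. Compatibility with the connecting maps amounts to the identity $\sum_{\lambda \in v \Lambda^{n-m}} s(\lambda)(n) = v(m)$ in $T_\Lambda$ for all $m \le n$; this is precisely the generalized defining relation (\ref{general relation}) which is available because $\Lambda$ has no sources, so $v\Lambda^{n-m} \neq \emptyset$ (Remark \ref{rem connection with talented monoid}(ii)). The induced map $\eta : L \to T_\Lambda$ composes with $\psi$ to give the identity on generators in both directions, and both maps intertwine the $\mathbb{Z}^k$-actions (the state shift on $T_\Lambda$ corresponds to reindexing $\iota_n \mapsto \iota_{n+p}$ in the direct system), establishing $T_\Lambda \cong L$ as $\mathbb{Z}^k$-monoids.

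For (ii), I exploit that each $D_n = \mathbb{N}\Lambda^0$ is a free commutative monoid, hence cancellative, and argue that a direct limit of cancellative commutative monoids over a directed set remains cancellative. Given $a + b = a + c$ in $L$, represent all three elements at a common level, say $a, b, c \in D_n$. By definition of equality in the direct limit there exists $p \ge n$ with $\phi_{n,p}(a) + \phi_{n,p}(b) = \phi_{n,p}(a) + \phi_{n,p}(c)$ in the free monoid $D_p$; cancellation there gives $\phi_{n,p}(b) = \phi_{n,p}(c)$, so $[b,n] = [c,n]$ in $L$. Combining with (i) yields cancellativity of $T_\Lambda$.

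The main subtlety, rather than the direct-limit formalities, is verifying that $\eta_n$ respects the connecting morphisms; this is where the no-sources hypothesis is essential, since without it the sum $\sum_{\lambda \in v\Lambda^{n-m}} s(\lambda)$ may be empty and the identification $v(m) = 0$ in $T_\Lambda$ would fail. Everything else reduces to bookkeeping with the universal properties of the free monoid, the quotient by the relations (\ref{TM equation}), and the direct limit.
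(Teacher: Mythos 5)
Your proof is correct. For part (i) your argument is essentially the paper's: the authors likewise define the maps $D_n\to T_\Lambda$, $v\mapsto v(n)$, verify compatibility with the connecting morphisms via the relation (\ref{general relation}) (which is where the no-sources hypothesis enters, exactly as you flag), and then establish the universal property; your variant of packaging this as a pair of mutually inverse homomorphisms between $T_\Lambda$ and an independently constructed colimit $L$ is an equivalent bookkeeping choice. Where you genuinely diverge is part (ii): the paper deduces cancellativity by identifying the direct limit with $\mathcal{V}^{\gr}(\KP_\mathsf{k}(\Lambda))$ and citing \cite[Corollary 6.6]{Ara-Hazrat}, whereas you give a self-contained argument that a directed colimit of cancellative commutative monoids is cancellative, using that each $D_n=\mathbb{N}\Lambda^0$ is free and that equality in the colimit of classes represented at a common level $n$ means agreement after applying some $\phi_{n,p}$. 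Your route is more elementary and avoids any appeal to $K$-theoretic machinery; the paper's route has the side benefit of simultaneously establishing the identification of $T_\Lambda$ with the graded $\mathcal{V}$-monoid, which is reused immediately afterwards (Remark \ref{rem Group completion of talented monoid}). Both arguments are sound.
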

\begin{proof}
$(i)$ For each $n\in \mathbb{Z}^k$, we denote the generators in $D_n=\mathbb{N}\Lambda^0$ as $v_n$, $v\in \Lambda^0$. Define $\psi_n:D_n \longrightarrow T_\Lambda$ by $v_n \longmapsto \psi_n(v_n):=v(n)$ for all $v\in \Lambda^0$ and then extend linearly to all elements of $D_n$. Clearly $\psi_n$ induces a monoid homomorphism for each $n\in \mathbb{Z}^k$.

Let $m\le n\in \mathbb{Z}^k$. We claim that $\psi_n \circ \phi_{m,n}=\psi_m$. Choose any $v\in \Lambda^0$. Then
\begin{center}
$\phi_{m,n}(v_m)=\displaystyle{\sum_{\lambda\in v\Lambda^{n-m}}} s(\lambda)_n$,    
\end{center}
and so \[\psi_n(\phi_{m,n}(v_m))=\displaystyle{\sum_{\lambda\in v\Lambda^{n-m}}} \psi_n(s(\lambda)_n)=\displaystyle{\sum_{\lambda\in v\Lambda^{n-m}}} s(\lambda)(n)=\displaystyle{\sum_{v\in \Lambda^{n-m}}}s(\lambda)(m+(n-m))=v(m)=\psi_m(v_m).\]
Hence the claim is established. Now we just need to prove that $(T_\Lambda,\psi_n)_{n\in \mathbb{Z}^k}$ satisfies the universal property required to be a direct limit. So assume that $M$ is any monoid and $\chi_n:D_n\longrightarrow M$ are monoid homomorphisms satisfying $\chi_n\circ \phi_{m,n}=\chi_m$ for all $m\le n\in \mathbb{Z}^k$. We define $\chi:T_\Lambda \longrightarrow M$ by $v(i) \longmapsto \chi(v(i)):=\chi_i(v_i)$ for all $v\in \Lambda^0$ and $i\in \mathbb{Z}^k$. In order to show that $\chi$ induces a well-defined monoid homomorphism, choose any $v\in \Lambda^0$, $n\in \mathbb{Z}^k$ and $m\in \mathbb{N}^k$. In $T_\Lambda$ we have $v(n)=\displaystyle{\sum_{\alpha\in v\Lambda^m}}s(\alpha)(n+m)$. We need to show that $\chi$ is compatible with this relation. Now
\begin{align*}
    \chi(v(n))=\chi_n(v_n)=\chi_{n+m}(\phi_{n,n+m}(v_n)) &=\chi_{n+m}\left(\displaystyle{\sum_{\alpha\in v\Lambda^m}} s(\alpha)_{n+m}\right)\\
    &= \displaystyle{\sum_{\alpha\in v\Lambda^m}} \chi_{n+m}(s(\alpha)_{n+m})\\
    &= \displaystyle{\sum_{\alpha\in v\Lambda^m}} \chi(s(\alpha)(n+m))\\
    &=\chi\left(\displaystyle{\sum_{\alpha\in v\Lambda^m}} s(\alpha)(n+m)\right).
\end{align*}
Therefore $\chi$ is well-defined. Now from the definition of $\chi$ it is clear that $\chi$ is the unique monoid homomorphism satisfying $\chi\circ \psi_n=\chi_n$ for all $n\in \mathbb{Z}^k$. This completes the proof of $(i)$.

$(ii)$ This is immediate since $\displaystyle{\varinjlim_{\mathbb{Z}^k}} (D_n,\phi_{m,n})\cong \mathcal{V}^{\gr}(\KP_\mathsf{k}(\Lambda))$ and $\mathcal{V}^{\gr}(\KP_\mathsf{k}(\Lambda))$ is cancellative by \cite[Corollary 6.6]{Ara-Hazrat}.
\end{proof}

\begin{rmk}\label{rem Group completion of talented monoid}
From Proposition \ref{pro realizing as a direct limit} and \cite[Corollary 6.7]{Ara-Hazrat}, one can conclude that the group completion of the monoid $T_\Lambda$ is the graded Grothendieck group $K_0^{\gr}(\KP_\mathsf{k}(\Lambda))$. 
\end{rmk}

Recall the skew-product $k$-graph $\overline{\Lambda}:=\Lambda\times_d \mathbb{Z}^k$ from \cite{Kumjian-Pask}. There is a canonical action of $\ZZ^k$ on $M_{\overline{\Lambda}}$ defined by ${}^n (v,i) \longmapsto (v,(i+n))$ on the generators and extended linearly on the whole monoid.  

The following is a generalisation of \cite[Lemma 3.2]{Hazrat}.

\begin{thm} \label{thm:talisrefine}
Let $\Lambda$ be a row-finite $k$-graph without sources. Then 
\begin{enumerate}[\upshape(1)]
\item[(i)] there is a forgetful homomorphism of monoids $\pi:T_\Lambda \longrightarrow M_\Lambda$ given by $v(i) \longmapsto v$. Furthermore, we have 
\[\frac{T_\Lambda}{\langle v = v(n), v\in \Lambda^0, n \in \mathbb{Z}^k \rangle} \cong M_\Lambda;\] 

\item[(ii)] the map $\phi: T_\Lambda \longrightarrow M_{\overline{\Lambda}}$ defined by $v(n) \longmapsto (v,n)$ induces a $\ZZ^k$-monoid isomorphism.

\end{enumerate}
\end{thm}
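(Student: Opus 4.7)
For part (i), my plan is to use the universal properties of $T_\Lambda$ and $M_\Lambda$. The map $\pi$ on generators $v(i) \longmapsto v$ is extended linearly; to see it is well-defined one must check it respects the defining relations of $T_\Lambda$. Under $\pi$, the relation $v(n) = \sum_{\alpha\in v\Lambda^{e_i}} s(\alpha)(n+e_i)$ becomes $v = \sum_{\alpha\in v\Lambda^{e_i}} s(\alpha)$, which is precisely a defining relation of $M_\Lambda$ (and since $\Lambda$ has no sources, $v\Lambda^{e_i}\neq\emptyset$, so this is genuinely a relation). For the quotient statement, $\pi$ clearly kills the congruence $\langle v = v(n)\rangle$, inducing a surjection $\overline{\pi}$. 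I would construct the inverse $\psi: M_\Lambda \longrightarrow T_\Lambda/\langle v=v(n)\rangle$ by $v\longmapsto [v(0)]$; this is well-defined because the $M_\Lambda$-relation $v = \sum_{\alpha\in v\Lambda^{e_i}} s(\alpha)$ lifts to $[v(0)] = \sum [s(\alpha)(e_i)] = \sum[s(\alpha)(0)]$, using the relation in $T_\Lambda$ followed by the quotient identification. It is then immediate that $\psi$ and $\overline{\pi}$ are mutually inverse on generators.

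For part (ii), the main point is that the defining relations of $M_{\overline{\Lambda}}$ and $T_\Lambda$ are literally the same once one unpacks the skew-product. First I would verify that $\overline{\Lambda}$ is row-finite without sources: a path of degree $e_i$ with range $(v,n)$ in $\overline{\Lambda}$ is exactly a pair $(\alpha,n)$ with $\alpha\in v\Lambda^{e_i}$, and its source is $(s(\alpha), n+e_i)$, so $(v,n)\overline{\Lambda}^{e_i}$ is in bijection with $v\Lambda^{e_i}$. Consequently, the $M_{\overline{\Lambda}}$-relation at $(v,n)$ reads
\[
(v,n) \;=\; \sum_{\alpha\in v\Lambda^{e_i}} \bigl(s(\alpha), n+e_i\bigr),
\]
which is exactly the defining relation for $T_\Lambda$ under the bijective correspondence of generators $v(n)\leftrightarrow (v,n)$. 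Hence both $\phi:T_\Lambda\longrightarrow M_{\overline{\Lambda}}$, $v(n)\mapsto (v,n)$, and its candidate inverse $\psi: M_{\overline{\Lambda}}\longrightarrow T_\Lambda$, $(v,n)\mapsto v(n)$, are well-defined monoid homomorphisms, and mutually inverse on generators.

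Finally, $\mathbb{Z}^k$-equivariance of $\phi$ is immediate from the definitions: $\phi({}^m v(n)) = \phi(v(m+n)) = (v,m+n) = {}^m(v,n) = {}^m\phi(v(n))$, and this extends linearly.

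The only mildly delicate point is to confirm that the defining relations of $T_\Lambda$ given only for unit degrees $e_i$ with $v\Lambda^{e_i}\neq\emptyset$ (in Definition \ref{def higher rank talented monoid}) are sufficient to make the forgetful map and the skew-product map respect the larger family of relations actually present in $M_\Lambda$ and $M_{\overline{\Lambda}}$; this is handled by the no-sources hypothesis, which guarantees $v\Lambda^{e_i}\neq\emptyset$ for every $v$ and every $i$, together with Remark \ref{rem connection with talented monoid}$(ii)$ stating that under this hypothesis the $e_i$-relations generate all relations of type \eqref{general relation}. I do not anticipate any real obstacle beyond bookkeeping.
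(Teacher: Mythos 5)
Your proposal is correct and follows essentially the same route as the paper: for (ii) both arguments reduce to matching the generators $v(n)\leftrightarrow(v,n)$ and observing that the defining $e_i$-relations of $T_\Lambda$ and $M_{\overline{\Lambda}}$ correspond exactly (the paper verifies well-definedness of $\phi$ via the degree-$m$ relations and Proposition \ref{pro another realization of k-graph monoid}, but the content is the same), and the $\ZZ^k$-equivariance check is identical. For (i) the paper merely sketches the argument by introducing an auxiliary relation $\curly_1$ and leaving the verification to the reader, so your explicit construction of the mutually inverse maps $\overline{\pi}$ and $\psi$ on the quotient actually supplies more detail than the paper does.
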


\begin{proof}
$(i)$ The fact that $\pi:T_\Lambda \longrightarrow M_\Lambda, v(i) \longmapsto v$ is well-defined, is immediate. Define a binary relation $\curly_1$ on $T_\Lambda$ as follows:

\begin{equation}\label{ihesdin}
\sum_{l=1}^q v_l (n_l) \curly_1 \sum_{l \neq p} v_l(n_l) +   v_p(n_p+n), \text{ for some } n\in \mathbb N^k.
\end{equation}

Let $\curly$ be the transitive and reflective closure of $\curly_1$ on $T_\Lambda$, that is $\alpha \curly \beta$ if and only if either $\alpha=\beta$ or there is a finite string $\alpha = \alpha_0 \curly_1 \alpha_1 \curly_1 \cdots \curly_1 \alpha_l = \beta$. Let $\approx$ be the congruence on $T_\Lambda$ generated by $\curly$. This congruence relation respects the action of $\mathbb Z^k$. We leave it to the reader to show that $\approx$ is in fact the congruence specified in the statement and $T_E/\approx$ is isomorphic to $M_\Lambda$. 

$(ii)$ 
We check that $\phi$ is well-defined. In $T_\Lambda$ we have
$v(n) = \displaystyle{\sum_{\lambda \in  v \Lambda^m}} s(\lambda)(n +m)$ for any $m\in \mathbb{N}^k$.
Then 
\begin{align*}
\phi ( v(n)) =(v,n) & = \displaystyle{\sum_{(\lambda,n)\in (v,n)\overline{\Lambda}^m}} \overline{s}(\lambda,n) ~\text{(using Proposition \ref{pro another realization of k-graph monoid})}\\
&=  \displaystyle{\sum_{\lambda\in v\Lambda^m}} (s(\lambda),n+m) \\
&=\phi \left(\displaystyle{\sum_{\lambda\in v\Lambda^m}} s(\lambda)(n+m)\right).
\end{align*}
To establish $\phi$ is a  $\ZZ^k$-monoid isomorphism is now straightforward. 
\end{proof}

Throughout this article we conveniently switch between $T_\Lambda$ and  $M_{\overline{\Lambda}}$. In particular Theorem~\ref{thm:talisrefine} allows us to use all the results proved for the monoid $M_\Lambda$, such as Lemma \ref{lem the splitting lemma} and confluence Lemma \ref{lem the confluence lemma} for $T_\Lambda$. 

We conclude this section by proving a nice connection between $T_\Lambda$ and the type monoid of the inverse semigroup of the compact open bisections of the skew-product groupoid $\mathcal{G}_\Lambda\times_c \mathbb{Z}^k$ where $c:\mathcal{G}_\Lambda\longrightarrow \mathbb{Z}^k$ is the canonical cocycle. This will directly prove that $T_\Lambda$ is indeed a refinement monoid (see \cite[Corollary 4.1.4]{Wehrung}).

Let $\Lambda$ be a row-finite $k$-graph with no sources. We wish to show that the talented monoid $T_\Lambda$ is isomorphic (as $\mathbb{Z}^k$-monoids) with the graded Type semigroup $\Typ^{\gr}(\mathcal{G}_\Lambda)$. Consider the correspondence 
\begin{align*}
\varphi: T_\Lambda &\longrightarrow \Typ^{\gr}(\mathcal{G}_\Lambda)=\Typ(\mathcal{G}_\Lambda\times_c \mathbb{Z}^k)\\
v(n) &\longmapsto \typ(Z(v)\times \{n\}).
\end{align*}
We show that the above correspondence when extended linearly gives rise to a well-defined monoid homomorphism.

Let $v\in \Lambda^0$ and $n\in \mathbb{Z}^k$. Then for each $m\in \mathbb{N}^k$, we have 
\begin{center}
    $v(n)=\displaystyle{\sum_{\lambda\in v\Lambda^m}} s(\lambda)(n+m)$.
\end{center}
We have to show that $\varphi$ respects the above relation. Let $\lambda\in v\Lambda^m$ and $x:=Z(\lambda,s(\lambda))\times \{n\}\in (\mathcal{G}_\Lambda \times_{c} \mathbb{Z}^k)^a$. Using the Definition of skew-product groupoid (see \cite[Definition 1.6]{Renault}), we have
\[x^{-1}x =(Z(\lambda,s(\lambda))\times \{n\})^{-1}(Z(\lambda,s(\lambda))\times \{n\})=(Z(s(\lambda),\lambda)\times \{n+m\})(Z(\lambda,s(\lambda))\times \{n\})=Z(s(\lambda))\times \{n+m\}\]
and 
\[xx^{-1}=(Z(\lambda,s(\lambda))\times \{n\})(Z(s(\lambda),\lambda)\times \{n+m\})=Z(\lambda)\times \{n\}.\]
Therefore $\typ(Z(s(\lambda))\times \{n+m\})=\typ(Z(\lambda)\times \{n\})$ using \cite[Definition 3.1 (2)]{Cordeiro}. 

Now $Z(v)=\displaystyle{\bigsqcup_{\lambda\in v\Lambda^m}} Z(\lambda)$ and so 
\begin{align*}
\varphi(v(n))= \typ(Z(v)\times \{n\}) &= \typ\left(\displaystyle{\bigsqcup_{\lambda\in v\Lambda^m}}(Z(\lambda)\times \{n\})\right)\\
&= \displaystyle{\sum_{\lambda\in v\Lambda^m}} \typ(Z(\lambda)\times\{n\})\\
&=\displaystyle{\sum_{\lambda\in v\Lambda^m}} \typ(Z(s(\lambda))\times \{n+m\})\\
&=\displaystyle{\sum_{\lambda\in v\Lambda^m}} \varphi(s(\lambda)(n+m))\\
&=\varphi\left(\displaystyle{\sum_{\lambda\in v\Lambda^m}} s(\lambda)(n+m)\right).
\end{align*}

Thus $\varphi$ is a well-defined $\mathbb{Z}^k$-monoid homomorphism. 

We need one lemma before we can prove that the above defined homomorphism is in fact bijective.

\begin{lem}\label{lem relative complement and disjoint union}
Let $\Lambda$ be a row-finite $k$-graph without sources. Let $\mathcal{B}:=\{Z(\lambda,\mu)~|~\lambda,\mu\in \Lambda; s(\lambda)=s(\mu)\}$ be the basis of the topology of $\mathcal{G}_\Lambda$ as described in \cite{Kumjian-Pask}. Then

$(i)$ $Z(\lambda,\mu)\cap Z(\gamma,\delta)=\displaystyle{\bigsqcup_{(\alpha,\beta)\in F}} Z(\lambda\alpha,\delta\beta)$ where $F:=\Lambda^{\min}(\lambda,\gamma)\cap \Lambda^{\min}(\mu,\delta)$;

$(ii)$ for each $B_1,B_2\in \mathcal{B}$, $B_1\setminus B_2$ is a finite disjoint union of sets from $\mathcal{B}$;

$(iii)$ for each $\lambda,\mu\in \Lambda$, $Z(\lambda)\setminus Z(\mu)=\displaystyle{\bigsqcup_{\substack{\alpha\in s(\lambda)\Lambda^m\\ \alpha\notin Ext(\lambda;\{\mu\}) }}} Z(\lambda\alpha)$ where $m:=(d(\lambda)\vee d(\mu))-d(\lambda)$.
\end{lem}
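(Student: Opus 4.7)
The plan is to prove the three items in the order (i), (iii), (ii), so that both (i) and the slicing idea behind (iii) can be combined in the proof of (ii). All three parts exploit the unique factorization property of $\Lambda$, together with the no-sources and row-finiteness hypotheses.

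For (i), I would trace an arbitrary element $g\in Z(\lambda,\mu)\cap Z(\gamma,\delta)$ through both parameterizations: writing $g=(\lambda z,d(\lambda)-d(\mu),\mu z)=(\gamma z',d(\gamma)-d(\delta),\delta z')$, the equality $\lambda z=\gamma z'$ together with unique factorization produces a pair $(\alpha,\beta)$ and a common infinite tail $z_0$ with $\lambda\alpha=\gamma\beta$, $d(\lambda\alpha)=d(\lambda)\vee d(\gamma)$, $z=\alpha z_0$ and $z'=\beta z_0$, so $(\alpha,\beta)\in\Lambda^{\min}(\lambda,\gamma)$. The key technical step is to verify that the \emph{same} pair also lies in $\Lambda^{\min}(\mu,\delta)$. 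Writing $p=d(\lambda)$, $q=d(\mu)$, $r=d(\gamma)$, $s=d(\delta)$, the constraint $p-q=r-s$ yields the coordinatewise identity $q+((p\vee r)-p)=q\vee s$, hence $d(\mu\alpha)=d(\mu)\vee d(\delta)$; comparing $\mu z=\delta z'$ at this common degree then forces $\mu\alpha=\delta\beta$. Thus $(\alpha,\beta)\in F$ and $g\in Z(\lambda\alpha,\delta\beta)$. The reverse inclusion is immediate, and disjointness follows since distinct pairs in $F$ give distinct first coordinates $\lambda\alpha$ of common degree $p\vee r$.

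For (iii), the no-sources hypothesis gives the finite disjoint decomposition $Z(\lambda)=\bigsqcup_{\alpha\in s(\lambda)\Lambda^m}Z(\lambda\alpha)$ with $m=(d(\lambda)\vee d(\mu))-d(\lambda)$. Because $d(\lambda\alpha)=d(\lambda)\vee d(\mu)\ge d(\mu)$, unique factorization shows that an infinite path with prefix $\lambda\alpha$ lies in $Z(\mu)$ if and only if $\mu$ is a prefix of $\lambda\alpha$; so each $Z(\lambda\alpha)$ is either entirely contained in, or entirely disjoint from, $Z(\mu)$. Finally, $\mu$ is a prefix of $\lambda\alpha$ exactly when there is $\beta$ with $\lambda\alpha=\mu\beta$ (the degree constraint being automatic), i.e.\ precisely when $\alpha\in\operatorname{Ext}(\lambda;\{\mu\})$, yielding the claimed decomposition of $Z(\lambda)\setminus Z(\mu)$.

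For (ii), I combine (i) with the slicing idea of (iii). By (i), $B_1\cap B_2=\bigsqcup_{(\alpha,\beta)\in F}Z(\lambda\alpha,\mu\alpha)$, which is a \emph{finite} union because $\Lambda$ is row-finite and therefore finitely aligned. The range map restricts to a homeomorphism $r:Z(\lambda,\mu)\to Z(\lambda)$ sending each $Z(\lambda\alpha,\mu\alpha)$ to $Z(\lambda\alpha)$, so it suffices to express $Z(\lambda)\setminus\bigsqcup_{\alpha}Z(\lambda\alpha)$ as a finite disjoint union of basic cylinder sets. All $\alpha$ occurring share the common degree $p=(d(\lambda)\vee d(\gamma))-d(\lambda)$, so the no-sources decomposition $Z(\lambda)=\bigsqcup_{\eta\in s(\lambda)\Lambda^p}Z(\lambda\eta)$ (finite by row-finiteness) lets one simply discard those indices $\eta$ that appear in the intersection; pulling back through $r$ exhibits $B_1\setminus B_2$ as the finite disjoint union $\bigsqcup Z(\lambda\eta,\mu\eta)$ of members of $\mathcal{B}$. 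The main obstacle throughout is the degree bookkeeping in (i), specifically the identity $q+((p\vee r)-p)=q\vee s$, which is precisely what forces the single extracted pair $(\alpha,\beta)$ to be minimal simultaneously for $(\lambda,\gamma)$ and for $(\mu,\delta)$.
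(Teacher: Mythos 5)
Your proposal is correct, and for part $(ii)$ it is essentially the argument in the paper: decompose $B_1\cap B_2$ via $(i)$, rewrite each piece $Z(\lambda\alpha,\delta\beta)$ as $Z(\lambda\alpha,\mu\alpha)$ (legitimate since $(\alpha,\beta)\in \Lambda^{\min}(\mu,\delta)$ forces $\mu\alpha=\delta\beta$), and then discard those indices from the finite partition $Z(\lambda,\mu)=\bigsqcup_{\eta\in s(\lambda)\Lambda^{p}}Z(\lambda\eta,\mu\eta)$; your detour through the range homeomorphism $r:Z(\lambda,\mu)\to Z(\lambda)$ is just a repackaging of that. Where you genuinely diverge is in the other two parts. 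The paper simply cites Clark--Pangalela \cite[Lemma 5.5]{Clark} for $(i)$, whereas you prove it from scratch; your degree bookkeeping is right --- the identity $q+((p\vee r)-p)=q\vee s$ does follow coordinatewise from $p-q=r-s$ (which is itself forced whenever the intersection, or even $F$, is nonempty), and this is exactly what makes the extracted pair minimal for $(\mu,\delta)$ as well as for $(\lambda,\gamma)$. The paper then deduces $(iii)$ from $(i)$ and $(ii)$ via $Z(\lambda)=Z(\lambda,\lambda)$, while you prove $(iii)$ independently by the all-or-nothing containment of each $Z(\lambda\alpha)$ in $Z(\mu)$; both are fine, and your self-contained treatment of $(i)$ buys independence from the cited lemma at the cost of a little extra factorization work.
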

\begin{proof}
$(i)$ Follows from \cite[Lemma 5.5]{Clark}.

$(ii)$ Let $B_1=Z(\lambda,\mu)$ and $B_2=Z(\gamma,\delta)$. Then 
\begin{align*}
    B_1\setminus B_2 & =Z(\lambda,\mu)\setminus Z(\gamma,\delta)\\
    & =Z(\lambda,\mu)\setminus \displaystyle{\bigsqcup_{(\alpha,\beta)\in F}} Z(\lambda\alpha,\delta\beta) ~\text{(from (i))}\\
    &=Z(\lambda,\mu)\setminus \displaystyle{\bigsqcup_{\alpha\in D(F)}} Z(\lambda\alpha,\mu\alpha), ~\text{where}~ D(F):=\{\alpha\in \Lambda~|~(\alpha,\beta)\in F ~\text{for some}~ \beta\in \Lambda\}\\
    &=\displaystyle{\bigsqcup_{\eta\in s(\lambda)\Lambda^n ~\setminus ~D(F)}} Z(\lambda\eta,\mu\eta), ~\text{where}~ n:=(d(\lambda)\vee d(\gamma))-d(\lambda)=(d(\mu)\vee d(\delta))-d(\mu).
\end{align*}
$(iii)$ Follows from $(i)$ and $(ii)$ since $Z(\lambda)=Z(\lambda,\lambda)$.
\end{proof}

Now we are in a position to prove that $\varphi:T_\Lambda\longrightarrow \Typ^{\gr}(\mathcal{G}_\Lambda)$ is bijective. Let $U$ be any compact open subset of $\mathcal{G}_\Lambda^{(0)}$ and $n\in \mathbb{Z}^k$. Using Lemma \ref{lem relative complement and disjoint union} $(i)$ and $(iii)$, one can write $U\times \{n\}=\displaystyle{\bigsqcup_{i=1}^{t}} Z(\lambda_i)\times \{n\}$ where $t\in \mathbb{N}$. Then $\typ(U\times \{n\})=\displaystyle{\sum_{i=1}^{t}} \typ(Z(\lambda_i)\times \{n\})=\displaystyle{\sum_{i=1}^{t}}\typ(Z(s(\lambda_i))\times \{n+d(\lambda_i)\})=\varphi\left(\displaystyle{\sum_{i=1}^{t}} s(\lambda_i)(n+d(\lambda_i))\right)$. Therefore $\varphi$ is surjective. Now we show that $\varphi$ is injective as well.

Let $a=\displaystyle{\sum_{i=1}^{p}} v_i(n_i)$ and $b=\displaystyle{\sum_{j=1}^{q}} w_j(m_j)\in T_\Lambda$ be such that $\varphi(a)=\varphi(b)$. So $\displaystyle{\sum_{i=1}^{p}} \typ(Z(v_i)\times n_i)=\displaystyle{\sum_{j=1}^{q}} \typ(Z(w_j)\times m_j)$ in $\Typ^{\gr}(\mathcal{G}_\Lambda)$. Using the isomorphism of \cite[Proposition 7.3]{Bosa} we can argue that in $S(\mathcal{G}_\Lambda\times_c \mathbb{Z}^k, (\mathcal{G}_\Lambda\times_c \mathbb{Z}^k)^a)$,
\begin{center}
    $A:=\displaystyle{\bigcup_{i=1}^{p}} (Z(v_i)\times n_i)\times \{i\}\sim B:=\displaystyle{\bigcup_{j=1}^{q}} (Z(w_j)\times m_j)\times \{j\}$.
\end{center}
Now by the definition of $\sim$ and using Lemma \ref{lem relative complement and disjoint union} $(i)$, $(ii)$ we can find $l\in \mathbb{N}$, $\lambda_1,\lambda_2,...,\lambda_l,\mu_1,\mu_2,...,\mu_l\in \Lambda$; $t_1,t_2,...,t_l\in \mathbb{Z}^k$ and $p_1,p_2,...,p_l,q_1,q_2,...,q_l\in \mathbb{N}$ such that
\begin{center}
    $A=\displaystyle{\bigsqcup_{k=1}^{l}}s(Z(\lambda_k,\mu_k)\times t_k)\times \{p_k\}$ and $B=\displaystyle{\bigsqcup_{k=1}^{l}}r(Z(\lambda_k,\mu_k)\times t_k)\times \{q_k\}$.
\end{center}
It implies that one can partition $\{1,2,...,l\}$ as $\displaystyle{\sqcup_{i=1}^{p}} I_i$ and $\displaystyle{\sqcup_{j=1}^{q}} J_j$ such that 
\begin{center}
    $Z(v_i)\times n_i=\displaystyle{\bigsqcup_{k\in I_i}} Z(\mu_k)\times \{t_k+d(\lambda_k)-d(\mu_k)\}$ and $Z(w_j)\times m_j=\displaystyle{\bigsqcup_{k\in J_j}} Z(\lambda_k)\times \{t_k\}$
\end{center}
for all $i=1,2,...,p$ and $j=1,2,...,q$.

Therefore $m_j=t_k$ for all $k\in J_j$ and $n_i+d(\mu_k)=t_k+d(\lambda_k)$ for all $k\in I_i$. Moreover $Z(v_i)=\displaystyle{\bigsqcup_{k\in I_i}} Z(\mu_k)$ and $Z(w_j)=\displaystyle{\bigsqcup_{k\in J_j}} Z(\lambda_k)$. 

Now we claim that $(v_i,0)=\displaystyle{\sum_{k\in I_i}} (s(\mu_k),d(\mu_k))$ in $M_{\overline{\Lambda}}$. Let $n=\displaystyle{\bigvee_{k\in I_i}} d(\mu_k)$. For each $\alpha\in v_i\Lambda^n$, $Z(\alpha)\subseteq Z(v_i)=\displaystyle{\bigsqcup_{k\in I_i}} Z(\mu_k)$ and so there exist a unique $k'\in I_i$ with the property that $\alpha(0,d(\mu_k'))=\mu_k'$ otherwise it will contradict the disjoint union decomposition of $Z(v_i)$. This argument shows that
\begin{center}
    $\displaystyle{\sum_{k\in I_i}} (s(\mu_k),d(\mu_k))\longrightarrow \displaystyle{\sum_{k\in I_i}}~\left(\displaystyle{\sum_{\xi\in s(\mu_k)\Lambda^{n-d(\mu_k)}}} (s(\xi),n)\right)=\displaystyle{\sum_{\alpha\in v_i\Lambda^n}} (s(\alpha),n)$.
\end{center}
in $\mathbb{F}_{_{\overline{\Lambda}}}$. Now our claim is established by using Lemma \ref{lem two sum equivalent} and Lemma \ref{lem the confluence lemma}. Therefore in $T_\Lambda$, we have $v_i=\displaystyle{\sum_{k\in I_i}} s(\mu_k)(d(\mu_k))$. Similarly $w_j=\displaystyle{\sum_{k\in J_j}} s(\lambda_k)(d(\lambda_k))$. Finally we have
\begin{align*}
a=\displaystyle{\sum_{i=1}^{p}} v_i(n_i)& =\displaystyle{\sum_{i=1}^{p}} \left(\displaystyle{\sum_{k\in I_i}} s(\mu_k)(n_i+d(\mu_k))\right)\\
& =\displaystyle{\sum_{k=1}^{l}}s(\mu_k)(t_k+d(\lambda_k))\\
&=\displaystyle{\sum_{k=1}^{l}} s(\lambda_k)(t_k+d(\lambda_k))\\
&=\displaystyle{\sum_{j=1}^{q}}\left(\displaystyle{\sum_{k\in J_j}} s(\lambda_k)(m_j+d(\lambda_k))\right)\\
&=\displaystyle{\sum_{j=1}^{q}} w_j(m_j)=b. 
\end{align*}
Thus $\varphi$ is a $\mathbb{Z}^k$-monoid isomorphism and we have proved the following: 

\begin{thm}\label{th isomorphism between talented monoid and type monoid}
For any row-finite $k$-graph without sources $\Lambda$, the talented monoid $T_\Lambda\cong T_{\mathcal{G}_\Lambda}$, and hence $T_\Lambda$ is a conical refinement monoid.    
\end{thm}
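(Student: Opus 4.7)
The plan is to construct an explicit $\mathbb{Z}^k$-monoid homomorphism
\[\varphi \colon T_\Lambda \longrightarrow T_{\mathcal{G}_\Lambda} = \Typ(\mathcal{G}_\Lambda \times_c \mathbb{Z}^k), \qquad v(n) \longmapsto \typ(Z(v)\times\{n\}),\]
and prove that it is a bijection. Once this is achieved, the conical refinement property of $T_\Lambda$ follows for free from the general fact that type monoids of Boolean inverse semigroups are always conical refinement monoids (e.g.\ \cite[Corollary 4.1.4]{Wehrung}), so the second clause of the theorem is not an additional burden.

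For well-definedness I would verify that the defining relation $v(n) = \sum_{\lambda \in v\Lambda^m} s(\lambda)(n+m)$ is respected by $\varphi$. This splits into two ingredients: the cylinder decomposition $Z(v) = \bigsqcup_{\lambda \in v\Lambda^m} Z(\lambda)$, which supplies additivity under $\typ$; and the identity $\typ(Z(\lambda)\times\{n\}) = \typ(Z(s(\lambda))\times\{n+m\})$ for each $\lambda \in v\Lambda^m$. The latter is witnessed by the compact open bisection $Z(\lambda,s(\lambda))\times\{n\}$ inside the skew-product groupoid, whose source and range I would compute by unwinding the definitions of $r,s$ and the cocycle $c$. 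The $\mathbb{Z}^k$-equivariance is built directly into the definition of $\varphi$ on generators. For surjectivity, I would first show that every compact open subset of $\Lambda^\infty \times \mathbb{Z}^k$ is a finite disjoint union of basic sets $Z(\lambda)\times\{n\}$; this reduces to checking that intersections $Z(\lambda)\cap Z(\mu)$ and differences $Z(\lambda)\setminus Z(\mu)$ can be expressed this way, which is a routine analysis using $\Lambda^{\min}(\lambda,\mu)$ and the extension sets.

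The main obstacle will be injectivity. My plan is to exploit B\"onicke and Li's alternative presentation of the type semigroup, in which an element is represented by a labelled disjoint union $\bigsqcup_i (A_i \times \{i\})$ modulo an equivalence generated by compact open bisections. If $\varphi(a) = \varphi(b)$ for $a = \sum_i v_i(n_i)$ and $b = \sum_j w_j(m_j)$, then the corresponding representatives are equivalent via a finite collection of bisections $W_1,\dots,W_\ell$ of the skew product, which I would refine to pieces of the form $Z(\lambda_k,\mu_k)\times\{t_k\}$. This should produce partitions $Z(v_i) = \bigsqcup_{k \in I_i} Z(\mu_k)$ and $Z(w_j) = \bigsqcup_{k \in J_j} Z(\lambda_k)$ with matching degree data. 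To convert these cylinder-level identities into equalities in $T_\Lambda$, I would pass to the skew-product graph $\overline{\Lambda}$ via Theorem \ref{thm:talisrefine}(ii) and invoke the confluence lemma (Lemma \ref{lem the confluence lemma}) together with Lemma \ref{lem two sum equivalent} to obtain $v_i = \sum_{k \in I_i} s(\mu_k)(d(\mu_k))$ and analogously for each $w_j$. Substituting and reindexing should then collapse the two sums to the same element, giving $a=b$. This is precisely the step where the absence of sources becomes crucial, since the confluence property can fail otherwise, as already observed in Remark \ref{rem confluence may not follow}.
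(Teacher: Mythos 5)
Your proposal matches the paper's proof essentially step for step: the same map $\varphi\colon v(n)\mapsto \typ(Z(v)\times\{n\})$, well-definedness via the bisection $Z(\lambda,s(\lambda))\times\{n\}$, surjectivity from decomposing compact opens into basic cylinders using $\Lambda^{\min}$ and extension sets, and injectivity via the B\"onicke--Li presentation refined to pieces $Z(\lambda_k,\mu_k)\times\{t_k\}$, converted back to $T_\Lambda$ through $M_{\overline{\Lambda}}$ with the confluence lemma and Lemma \ref{lem two sum equivalent}. The approach is correct and is the same as the paper's.
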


\begin{rmks}\label{rem importance of the isomorphism}
$(i)$ It is well known that the fundamental properties of a $k$-graph $\Lambda$ are intrinsically connected to the properties of the path groupoid $\mathcal{G}_\Lambda$ (see \cite[Propositions 6.3, 7.1]{Clark}). In the following sections we show that the properties like cofinality, aperiodicity, and strong aperiodicity of $\Lambda$ are nicely captured by the talented monoid $T_\Lambda$. Thus in view of Theorem \ref{th isomorphism between talented monoid and type monoid} we get strong evidence, in the level of $k$-graphs, that the graded type semigroup of a $\Gamma$-graded ample groupoid with its $\Gamma$-action encodes important information about the groupoid and its associated Steinberg algebra which is in line with the Program \ref{genproblem} in the introduction.

$(ii)$ Following a similar line of argument (with a little bit of required modifications) used in the above proof, one can also obtain the non-graded version of Theorem \ref{th isomorphism between talented monoid and type monoid}, namely $M_\Lambda\cong \Typ(\mathcal{G}_\Lambda)$. Via this isomorphism we can relate our $k$-graph monoid to the semigroup $S(\Lambda)$ defined in \cite[Definition 3.5]{PSS}. Since for any ample groupoid $\mathcal{G}$, the type semigroups $\Typ(\mathcal{G})$, $S(\mathcal{G},\mathcal{G}^a)$ (B\"{o}nicke and Li \cite{Bonicke}), and $S(\mathcal{G})$ (Rainone and Sims \cite{Rainone}) are all isomorphic (due to \cite[Proposition 7.3]{Bosa} and \cite[Remark 5.5]{Rainone}), we infer that $M_\Lambda\cong S(\mathcal{G}_\Lambda)\cong S(\Lambda)$ where the last isomorphism is obtained in \cite[Proposition 8.11]{Rainone}. Also in view of Theorem \ref{thm:talisrefine} $(ii)$, $T_\Lambda\cong S(\overline{\Lambda})$. Since $S(\Lambda)$ is the quotient of the free commutative monoid $\mathbb{N}\Lambda^0$ by a certain congruence $\approx_\Lambda$ defined using the matrices $A_n\in \mathbb{M}_{\Lambda^0}(\mathbb{N})$, $n\in \mathbb{N}^k$ where  $A_n(v,w):=|v\Lambda^n w|$ for all $v,w\in \Lambda^0$ (see \cite{PSS, Rainone} for more details), we believe that these isomorphisms may help to connect the concepts of symbolic dynamics such as shift equivalence and strong shift equivalence with the monoids $M_\Lambda$ and $T_\Lambda$.
\end{rmks}

\section{Cofinality of a $k$-graph via $T_\Lambda$}\label{sec cofinality}
In this section our initial aim is to set up a lattice isomorphism between the lattice of all hereditary saturated subsets of a $k$-graph $\Lambda$ and the lattice of all $\mathbb{Z}^k$-order ideals of $T_\Lambda$. This will help us to find a necessary and sufficient condition for cofinality of $\Lambda$ using the monoid $T_\Lambda$. Throughout the rest of this section we will assume that $\Lambda$ is a row-finite $k$-graph without sources.

Let $J$ be a $\mathbb{Z}^k$-order ideal of $T_\Lambda$. We define 
\begin{center}
$H_{_J}:=\{v\in \Lambda^0~|~v(0)\in J\}$. 
\end{center}
\begin{lem}\label{lem HS subset from order ideal}
Let $J$ be a $\mathbb{Z}^k$-order ideal of $T_\Lambda$. Then $H_{_J}$ is a hereditary and saturated subset of $\Lambda^0$.
\end{lem}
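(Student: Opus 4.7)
The plan is to check the two defining conditions (hereditary and saturated) separately, in each case using the defining relations of $T_\Lambda$ together with the facts that $J$ is closed under the algebraic pre-order and under the $\mathbb{Z}^k$-action.

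For hereditariness, I would take $v\in H_{_J}$ and $\lambda\in v\Lambda$, and show $s(\lambda)\in H_{_J}$. Since $\Lambda$ has no sources we may apply the more general relation (\ref{general relation}) from Remark \ref{rem connection with talented monoid}(ii) with $m=d(\lambda)$ to write
\[
v(0)=\sum_{\alpha\in v\Lambda^{d(\lambda)}} s(\alpha)(d(\lambda)).
\]
Isolating the summand for $\alpha=\lambda$ shows $s(\lambda)(d(\lambda))\le v(0)$ in the algebraic pre-order. Because $v(0)\in J$ and $J$ is an order ideal, $s(\lambda)(d(\lambda))\in J$; applying the $\mathbb{Z}^k$-action by $-d(\lambda)$ and using that $J$ is a $\mathbb{Z}^k$-order ideal yields $s(\lambda)(0)\in J$, so $s(\lambda)\in H_{_J}$.

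For saturation, I would take $v\in\Lambda^0$ and $n\in\mathbb{N}^k$ with $s(v\Lambda^n)\subseteq H_{_J}$ and show $v\in H_{_J}$. Again using (\ref{general relation}) (valid since $v\Lambda^n\neq\emptyset$ as $\Lambda$ has no sources) we have
\[
v(0)=\sum_{\alpha\in v\Lambda^n} s(\alpha)(n).
\]
For each $\alpha\in v\Lambda^n$, $s(\alpha)(0)\in J$ by hypothesis, and shifting by $n$ via the $\mathbb{Z}^k$-action gives $s(\alpha)(n)\in J$. Closure of $J$ under the monoid addition then forces $v(0)\in J$, so $v\in H_{_J}$.

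No serious obstacle is expected; the argument is essentially a bookkeeping exercise, with the only subtle point being to invoke the no-sources hypothesis to ensure $v\Lambda^{d(\lambda)}$ and $v\Lambda^n$ are nonempty (so that relation (\ref{general relation}) really does apply), and to use both features of a $\mathbb{Z}^k$-order ideal (downward closure under $\le$ and invariance under the $\mathbb{Z}^k$-action) rather than just one.
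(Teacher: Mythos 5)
Your proof is correct and follows essentially the same route as the paper: for hereditariness, isolate the summand $s(\lambda)(d(\lambda))$ in the relation $v(0)=\sum_{\alpha\in v\Lambda^{d(\lambda)}}s(\alpha)(d(\lambda))$, use downward closure and the shift by $-d(\lambda)$; for saturation, shift each $s(\alpha)(0)$ by $n$ and use closure of $J$ under addition. No gaps.
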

\begin{proof}
Let $\lambda\in \Lambda$ be such that $r(\lambda)\in H_{_J}$. Then $s(\lambda)(d(\lambda))\le \displaystyle{\sum_{\mu\in r(\lambda)\Lambda^{d(\lambda)}}} s(\mu)(d(\lambda))=r(\lambda)$ in $T_\Lambda$. Since $J$ is a $\mathbb{Z}^k$-order ideal, it implies that $s(\lambda)(0)=~^{-d(\lambda)} s(\lambda)(d(\lambda))\in J$ and so $s(\lambda)\in H_{_J}$. Thus $H_{_J}$ is hereditary. Let $v\in \Lambda^0$ and $n\in \mathbb{N}^k$ such that $s(v\Lambda^n)\subseteq H_{_J}$. Then it follows that $v(0)=\displaystyle{\sum_{\lambda\in v\Lambda^n}} s(\lambda)(n)\in J$. Consequently $v\in H_{_J}$. Therefore $H_{_J}$ is saturated as well.
\end{proof}

For a subset $X$ of $\Lambda^0$, by $\langle X\rangle$ we denote the $\mathbb{Z}^k$-order ideal of $T_\Lambda$ generated by $\{v(0)~|~v\in X\}$ if $X\neq \emptyset$ and $\langle X \rangle =\{0\}$ if $X=\emptyset$. For $X(\neq \emptyset)\subseteq \Lambda^0$, it is not hard to show that
\begin{center}
    $\langle X\rangle=\{x\in T_\Lambda~|~x\le \displaystyle{\sum_{i=1}^{N}}~^{n_i}v_i$ for some $N\in \mathbb{N};~n_i\in \mathbb{Z}^k$ and $v_i\in X$ for all $i=1,2,...,N\}$.
\end{center}
\begin{lem}\label{lem order ideal generated by HS subset}
Let $J$ be any $\mathbb{Z}^k$-order ideal of $T_\Lambda$. Then $J=\langle H_{_J}\rangle$. 
\end{lem}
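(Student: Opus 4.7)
The plan is to prove the two inclusions $\langle H_J\rangle \subseteq J$ and $J\subseteq \langle H_J\rangle$ directly from the definitions of $H_J$, of the generated $\mathbb{Z}^k$-order ideal, and of the algebraic pre-order on $T_\Lambda$. The key is that every element of $T_\Lambda$ is a finite $\mathbb{N}$-linear combination of generators $v(n)=\,{}^n v(0)$, combined with the fact that order ideals are down-sets under the algebraic pre-order and that $\mathbb{Z}^k$-order ideals are in addition closed under the $\mathbb{Z}^k$-action.

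For the easy inclusion $\langle H_J\rangle\subseteq J$, I would observe that by definition of $H_J$, every generator $v(0)$ with $v\in H_J$ lies in $J$. Since $J$ is itself a $\mathbb{Z}^k$-order ideal containing all these generators, and $\langle H_J\rangle$ is by definition the \emph{smallest} such $\mathbb{Z}^k$-order ideal, the inclusion $\langle H_J\rangle\subseteq J$ follows immediately.

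For the reverse inclusion $J\subseteq \langle H_J\rangle$, I would take an arbitrary $x\in J$ and write it as a finite sum of generators
\[
x=\sum_{i=1}^{N} v_i(m_i)=\sum_{i=1}^{N}{}^{m_i}v_i(0),
\]
with $v_i\in\Lambda^0$ and $m_i\in\mathbb{Z}^k$. Since $v_i(m_i)\le x$ in the algebraic pre-order and $J$ is a down-set, each $v_i(m_i)$ lies in $J$; then, since $J$ is closed under the $\mathbb{Z}^k$-action, $v_i(0)={}^{-m_i}v_i(m_i)\in J$, so $v_i\in H_J$ for every $i$. Using the description of $\langle H_J\rangle$ recalled just before the lemma, $x=\sum_{i=1}^N{}^{m_i}v_i(0)$ is itself of the displayed form, hence $x\in\langle H_J\rangle$.

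There is no real obstacle here: the argument only uses that the generators of $T_\Lambda$ are of the form ${}^n v(0)$, that the algebraic pre-order makes each summand dominated by the whole sum, and the two defining closure properties of a $\mathbb{Z}^k$-order ideal. The only point worth explicitly noting is that it does not matter which representation of $x$ as a sum of generators one picks: any such representation forces all of its vertex data to lie in $H_J$ by the argument above.
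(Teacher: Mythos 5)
Your proposal is correct and follows essentially the same route as the paper's proof: both inclusions are handled identically, with the reverse inclusion obtained by writing $x\in J$ as a sum of generators $v_i(m_i)$, using that each summand is dominated by $x$ in the algebraic pre-order and that $J$ is closed under the $\mathbb{Z}^k$-action to conclude $v_i\in H_J$. No gaps.
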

\begin{proof}
The inclusion $\langle H_{_J} \rangle \subseteq J$ is quite clear. For the other inclusion let $0\neq x\in J$. Suppose $x=\displaystyle{\sum_{i=1}^{t}} v_i(n_i)$ is any representation of $x$ in $T_\Lambda$. Then for each $i=1,2,...,t$ we have $v_i(n_i)\le x$ and so $v_i=~^{-n_i} v_i(n_i)\in J$ since $J$ is a $\mathbb{Z}^k$-order ideal. This implies $v_i\in H_{_J}$ for each $i=1,2,...,t$ and hence $x=\displaystyle{\sum_{i=1}^{t}} v_i(n_i)\in \langle H_{_J} \rangle$. 
\end{proof}
We obtain the following proposition which gives a necessary condition for cofinality of $\Lambda$.
\begin{prop}\label{pro necessary condition for cofinality}
Let $\Lambda$ be a row-finite $k$-graph without sources. If $\Lambda$ is cofinal then $T_\Lambda$ is a simple $\mathbb{Z}^k$-monoid.    
\end{prop}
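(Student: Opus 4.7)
The plan is to prove this as a direct consequence of the two lemmas immediately preceding the proposition, together with the classical fact from Aranda Pino et al.\ (\cite[Lemma 5.12]{Pino}) that in a cofinal row-finite $k$-graph without sources, the only hereditary saturated subsets of $\Lambda^0$ are $\emptyset$ and $\Lambda^0$. Since $T_\Lambda$ is already known to be conical (via Theorem~\ref{th isomorphism between talented monoid and type monoid}) and is nonzero whenever $\Lambda^0 \neq \emptyset$, simplicity as a $\mathbb{Z}^k$-monoid reduces to showing that every nonzero $\mathbb{Z}^k$-order ideal $J \subseteq T_\Lambda$ is all of $T_\Lambda$.

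First I would take an arbitrary nonzero $\mathbb{Z}^k$-order ideal $J$ and pick $0 \neq x \in J$. Writing $x = \sum_{i=1}^t v_i(n_i)$ in the canonical way, each summand $v_i(n_i)$ lies in $J$ because $J$ is a down set for the algebraic pre-order, and then $v_i(0) = {}^{-n_i} v_i(n_i) \in J$ since $J$ is stable under the $\mathbb{Z}^k$-action. This shows $v_i \in H_J$ for every $i$, so $H_J \neq \emptyset$. By Lemma~\ref{lem HS subset from order ideal}, $H_J$ is a nonempty hereditary saturated subset of $\Lambda^0$.

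Next, invoking the cofinality hypothesis through \cite[Lemma 5.12]{Pino}, $H_J = \Lambda^0$. By Lemma~\ref{lem order ideal generated by HS subset}, $J = \langle H_J \rangle = \langle \Lambda^0 \rangle$. Finally I would observe that every generator $v(n)$ of $T_\Lambda$ equals ${}^n v(0)$ and therefore lies in $\langle \Lambda^0 \rangle$, so $\langle \Lambda^0 \rangle = T_\Lambda$; hence $J = T_\Lambda$, proving that $T_\Lambda$ is a simple $\mathbb{Z}^k$-monoid.

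There is no real obstacle here: the proof is essentially a bookkeeping exercise once Lemmas~\ref{lem HS subset from order ideal} and \ref{lem order ideal generated by HS subset} are in hand, and the nontrivial content is entirely packaged inside the cited result of Aranda Pino et al. The only place requiring a mild bit of care is the very last step, where one must verify that the $\mathbb{Z}^k$-order ideal generated by $\{v(0) : v \in \Lambda^0\}$ is the full monoid $T_\Lambda$; this is immediate from the state-shift action but should be explicitly stated so the reader sees why $\langle \Lambda^0 \rangle = T_\Lambda$ rather than some proper submonoid.
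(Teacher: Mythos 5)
Your proposal is correct and follows essentially the same route as the paper: both reduce the claim to the dichotomy for hereditary saturated subsets from \cite[Lemma 5.12]{Pino} via Lemmas~\ref{lem HS subset from order ideal} and \ref{lem order ideal generated by HS subset}. The only cosmetic difference is that you argue $H_J\neq\emptyset$ directly from a nonzero element of $J$, whereas the paper phrases the same point contrapositively ($H_J=\emptyset$ forces $J=\langle\emptyset\rangle=\{0\}$).
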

\begin{proof}
By \cite[Lemma 5.12]{Pino} the only hereditary and saturated subsets of $\Lambda^0$ are $\emptyset$ and $\Lambda^0$ itself. Suppose $J$ is any $\mathbb{Z}^k$-order ideal of $T_\Lambda$. Then $H_{_J}$ is a hereditary and saturated subset by Lemma \ref{lem HS subset from order ideal} and $J=\langle H_{_J}\rangle$ by Lemma \ref{lem order ideal generated by HS subset}. Now if $H_{_J}=\emptyset$ then $J=\langle \emptyset \rangle=\{0\}$ and if $H_{_J}=\Lambda^0$ then $J=\langle \Lambda^0\rangle =T_\Lambda$.
\end{proof}
The next lemma helps us to prove our desired lattice isomorphism as well as the converse of Proposition \ref{pro necessary condition for cofinality}.
\begin{lem}\label{lem recovering HS subset}
Let $H$ be any hereditary and saturated subset of $\Lambda^0$. Let $J_{_H}:=\langle H\rangle$. Then $H=H_{J_{_H}}$.   
\end{lem}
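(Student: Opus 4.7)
The inclusion $H \subseteq H_{J_{_H}}$ is immediate: if $v \in H$ then the generator $v(0)$ lies in $\langle H \rangle = J_{_H}$, so $v \in H_{J_{_H}}$. The substance of the lemma is the reverse inclusion, which I plan to establish by translating the order-ideal membership $v(0) \in J_{_H}$ into a combinatorial condition in the free commutative monoid $\mathbb{F}_{_{\overline{\Lambda}}}$ of the skew-product $k$-graph, and then exploiting hereditariness and saturation of $H$.

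Suppose $v \in H_{J_{_H}}$, so $v(0) \in J_{_H}$. By the explicit description of $\langle H \rangle$ recorded just before the lemma, there exist $N \ge 1$, vertices $w_1, \dots, w_N \in H$, exponents $n_1, \dots, n_N \in \mathbb{Z}^k$ and an element $y \in T_\Lambda$ with
\[
v(0) + y \;=\; \sum_{i=1}^{N} w_i(n_i) \quad \text{in } T_\Lambda.
\]
Via the $\mathbb{Z}^k$-monoid isomorphism $T_\Lambda \cong M_{\overline{\Lambda}}$ of Theorem~\ref{thm:talisrefine}, this equality lifts to an equivalence $(v,0) + y' \sim \sum_i (w_i, n_i)$ in $\mathbb{F}_{_{\overline{\Lambda}}}$, where $\overline{\Lambda}$ is again a row-finite $k$-graph without sources. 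The confluence Lemma~\ref{lem the confluence lemma} therefore produces $\gamma \in \mathbb{F}_{_{\overline{\Lambda}}}$ with
\[
(v,0) + y' \longrightarrow \gamma \quad \text{and} \quad \sum_{i=1}^{N} (w_i, n_i) \longrightarrow \gamma.
\]

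Observe that $\overline{H} := H \times \mathbb{Z}^k$ is a hereditary subset of $\overline{\Lambda}^0$, since each arrow $\longrightarrow_j$ replaces a vertex by the sources of the degree-$e_j$ edges issuing from it and $H$ is hereditary in $\Lambda$. A straightforward induction on the number of arrows starting from any $(w_i, n_i)$ then shows the support stays inside $\overline{H}$; combining this observation with an iterated application of the splitting Lemma~\ref{lem the splitting lemma} to $\sum_i(w_i, n_i) \longrightarrow \gamma$ gives $\supp(\gamma) \subseteq \overline{H}$. A further use of the splitting lemma on $(v,0) + y' \longrightarrow \gamma$ yields a decomposition $\gamma = \gamma_1 + \gamma_2$ with $(v, 0) \longrightarrow \gamma_1$ and $\supp(\gamma_1) \subseteq \overline{H}$.

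The remaining and main technical step is the following claim, which I plan to prove by induction on the length of the chain of arrows: \emph{if $(u, m) \in \overline{\Lambda}^0$ and $(u, m) \longrightarrow \delta$ with $\supp(\delta) \subseteq \overline{H}$, then $u \in H$.} The base case of length zero forces $(u, m) = \delta$, so $u \in H$. For the inductive step, because the chain begins at the single vertex $(u, m)$, the first arrow is necessarily $(u, m) \longrightarrow_j \sum_{\lambda \in u \Lambda^{e_j}}(s(\lambda), m + e_j)$ for some $j$; iterated splitting then decomposes $\delta$ into pieces $\delta_\lambda$, each reached from $(s(\lambda), m + e_j)$ by a chain strictly shorter than the original, so the inductive hypothesis yields $s(\lambda) \in H$ for every $\lambda \in u\Lambda^{e_j}$. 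The no-sources hypothesis gives $u\Lambda^{e_j} \ne \emptyset$, and saturation of $H$ delivers $u \in H$. Specialising the claim to $(v, 0) \longrightarrow \gamma_1$ completes the proof.
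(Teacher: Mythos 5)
Your proposal is correct and follows essentially the same route as the paper's proof: unfold the order-ideal membership, transfer to $M_{\overline{\Lambda}}$, apply the confluence and splitting lemmas to isolate $\gamma_1$ with $(v,0)\longrightarrow\gamma_1$ and $\supp(\gamma_1)\subseteq H\times\mathbb{Z}^k$, then use saturation of $H$ along the chain of arrows. The only (cosmetic) difference is that you run the final step as a forward induction on chain length, repeatedly splitting after the first arrow, whereas the paper peels the chain backwards from $\gamma_1$ to $(v,0)$; both are valid.
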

\begin{proof}
The inclusion $H\subseteq H_{J_{_H}}$ is clear. Take $v\in H_{J_{_{H}}}$. Then $v\in \langle H\rangle$. Hence $v\le \displaystyle{\sum_{i=1}^{t}} u_i(n_i)$ for some $t\in \mathbb{N}$, $u_i\in H$ and $n_i\in \mathbb{Z}^k$ for all $i=1,2,..,t$. It implies $v+x=\displaystyle{\sum_{i=1}^{t}} u_i(n_i)$ for some $x\in T_\Lambda$. Now we use the isomorphism $\phi:T_\Lambda\longrightarrow M_{\overline{\Lambda}}$ to bypass this equality to $M_{\overline{\Lambda}}$. Let $y=\phi(x)$. Then we have 
\begin{center}
    $(v,0)+y\sim \displaystyle{\sum_{i=1}^{t}} (u_i,n_i)$ in $\mathbb{F}_{_{\overline{\Lambda}}} \setminus \{0\}$.
\end{center}
By Lemma \ref{lem the confluence lemma}, there exists $\gamma\in \mathbb{F}_{_{\overline{\Lambda}}} \setminus \{0\}$ such that $(v,0)+y\longrightarrow \gamma$ and $\displaystyle{\sum_{i=1}^{t}} (u_i,n_i)\longrightarrow \gamma$. Since $H$ is hereditary so is $H\times \mathbb{Z}^k$ in $\overline{\Lambda}^0$. Therefore each vertex in the basic representation of $\gamma$ in $\mathbb{F}_{_{\overline{\Lambda}}}$ is in $H\times \mathbb{Z}^k$. Now by Lemma \ref{lem the splitting lemma}, there are $\gamma_1,\gamma_2\in \mathbb{F}_{_{\overline{\Lambda}}} \setminus \{0\}$ such that $(v,0)\longrightarrow \gamma_1$, $y\longrightarrow\gamma_2$ and $\gamma=\gamma_1+\gamma_2$. Then we have a chain of arrows:
\begin{center}
    $(v,0)\longrightarrow_{j_1}\eta_1\longrightarrow_{j_2}\eta_2\longrightarrow_{j_3}\cdot\cdot\cdot\longrightarrow_{j_m}\eta_m=\gamma_1$. 
\end{center}
Since $\eta_{m-1}\longrightarrow_{j_m}\gamma_1$, so $\eta_{m-1}=\eta_{m-1}'+(w,l)$ and $\gamma_1=\eta_{m-1}'+\mathfrak{s}^{j_m}((w,l))$ for some $w\in \Lambda^0$ and $l\in \mathbb{Z}^k$. 
As $\overline{s}((w,l)\overline{\Lambda}^{e_{j_m}})\subseteq H\times \mathbb{Z}^k$ 
and $H\times \mathbb{Z}^k$ is saturated so $(w,l)\in H\times \mathbb{Z}^k$. So it follows that $\supp(\eta_{m-1})\subseteq H\times \mathbb{Z}^k$. Proceeding in this way, we can show that $\supp(\eta_1)\subseteq H\times \mathbb{Z}^k$ which in turn implies $\overline{s}((v,0)\overline{\Lambda}^{e_{j_1}})\subseteq H\times \mathbb{Z}^k$. Therefore $v\in H$ and we are done.
\end{proof}
By $\mathcal{H}(\Lambda)$ and $\mathcal{L}(T_\Lambda)$ we respectively denote the lattice of hereditary saturated subsets of $\Lambda^0$ and the lattice of $\mathbb{Z}^k$-order ideals of $T_\Lambda$. Using Lemma \ref{lem order ideal generated by HS subset} and \ref{lem recovering HS subset}, we obtain the following result. 

\begin{prop}\label{pro the lattice isomorphism}

Define $\rho:\mathcal{H}(\Lambda) \longrightarrow \mathcal{L}(T_\Lambda)$ by $\rho(H):=\langle H\rangle$ and $\eta:\mathcal{L}(T_\Lambda) \longrightarrow \mathcal{H}(\Lambda)$ by $\eta(J):=H_{_J}$. Then $\rho$ and $\eta$ are order preserving mutually inverse maps and hence $\mathcal{H}(\Lambda)$ is lattice isomorphic with $\mathcal{L}(T_\Lambda)$.
\end{prop}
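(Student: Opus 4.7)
The plan is to assemble the proof directly from the three preceding lemmas, which do essentially all the work.

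First I would verify that the two maps are well-defined. For $\rho$, this requires no new argument since $\langle H\rangle$ is by definition a $\mathbb{Z}^k$-order ideal of $T_\Lambda$. For $\eta$, well-definedness is exactly the content of Lemma \ref{lem HS subset from order ideal}, which tells us $H_J \in \mathcal{H}(\Lambda)$ whenever $J \in \mathcal{L}(T_\Lambda)$.

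Next I would check that $\rho$ and $\eta$ are order-preserving with respect to inclusion. If $H_1 \subseteq H_2$ in $\mathcal{H}(\Lambda)$, then $\{v(0) : v \in H_1\} \subseteq \{v(0) : v \in H_2\}$, so $\langle H_1 \rangle \subseteq \langle H_2 \rangle$ by the minimality in the definition of the $\mathbb{Z}^k$-order ideal generated by a set. Similarly, if $J_1 \subseteq J_2$, then $v(0) \in J_1$ implies $v(0) \in J_2$, whence $H_{J_1} \subseteq H_{J_2}$. Both monotonicity statements are immediate.

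To finish, I would observe that the two composites are identities. The composite $\rho \circ \eta$ sends $J$ to $\langle H_J \rangle$, which equals $J$ by Lemma \ref{lem order ideal generated by HS subset}. The composite $\eta \circ \rho$ sends $H$ to $H_{\langle H \rangle} = H_{J_H}$, which equals $H$ by Lemma \ref{lem recovering HS subset}. Thus $\rho$ and $\eta$ are mutually inverse, order-preserving bijections between the two posets, and since both $\mathcal{H}(\Lambda)$ and $\mathcal{L}(T_\Lambda)$ are lattices under inclusion, any such order-isomorphism is automatically a lattice isomorphism (it preserves meets and joins because these are characterized order-theoretically).

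There is essentially no obstacle here: the hard content has already been packaged into Lemmas \ref{lem HS subset from order ideal}, \ref{lem order ideal generated by HS subset}, and \ref{lem recovering HS subset}. The only mildly delicate point worth flagging is that the lattice join in $\mathcal{L}(T_\Lambda)$ is not the set-theoretic union but the order ideal generated by the union, so to confirm $\rho$ preserves joins one should note $\rho(H_1 \vee H_2) = \langle H_1 \cup H_2\rangle = \langle H_1\rangle \vee \langle H_2\rangle$, which again follows formally from the generator-based definition. This is, however, automatic once one has an order-isomorphism between two lattices, so no separate argument is needed.
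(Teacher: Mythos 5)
Your proposal is correct and follows exactly the route the paper takes: the paper states the proposition as an immediate consequence of Lemmas \ref{lem HS subset from order ideal}, \ref{lem order ideal generated by HS subset} and \ref{lem recovering HS subset}, which is precisely how you assemble it. Your additional remarks on monotonicity and on joins being automatic for an order-isomorphism of lattices are accurate and fill in the routine details the paper leaves implicit.
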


Let $R$ be a unital commutative ring and $\KP_R(\Lambda)$ the Kumjian-Pask algebra of $\Lambda$ over $R$ (see Definition \ref{def KP family}). We denote $\mathcal{L}^{\gb}(\KP_R(\Lambda))$ to be the lattice of all graded basic ideals of $\KP_R(\Lambda)$. By using Proposition \ref{pro the lattice isomorphism} and \cite[Theorem 5.1]{Pino} together, we can say that the three lattices $\mathcal{H}(\Lambda)$, $\mathcal{L}(T_\Lambda)$ and $\mathcal{L}^{\gb}(\KP_R(\Lambda))$ are mutually lattice isomorphic. The lattice isomorphism between $\mathcal{L}(T_\Lambda)$ and $\mathcal{L}^{\gb}(\KP_R(\Lambda))$ is given by $J\longmapsto I(H_{_J})$, where $I(H_{_J})$ is the ideal of $\KP_R(\Lambda)$ generated by the hereditary saturated subset $H_{_J}$. 

As an immediate application of these lattice isomorphisms, we obtain the following proposition which is an analogue of \cite[Lemma 5.5]{Hazrat} in the level of higher-rank graphs.

\begin{prop}\label{pro characterizing prime order ideals}
Let $\Lambda$ be a row-finite $k$-graph without sources. Let $J$ be any $\mathbb{Z}^k$-order ideal of $T_\Lambda$. Then $J$ is prime if and only if for any $x,y\in T_\Lambda\setminus J$, there exist $z\in T_\Lambda\setminus J$ and $n,m\in \mathbb{Z}^k$ such that $^{n}x$, $^{m}y\ge z$.   
\end{prop}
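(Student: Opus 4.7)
The plan is to prove both directions by exploiting two fundamental features of $T_\Lambda$ established earlier: the fact that $T_\Lambda$ is a conical refinement monoid (Theorem \ref{th isomorphism between talented monoid and type monoid}) and the elementary observation that for any $\mathbb{Z}^k$-order ideal $J$, one has $a+b \in J$ if and only if both $a,b \in J$ (since $J$ is a submonoid and a downward-closed subset).

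For the reverse implication, I would assume the stated property and take $\mathbb{Z}^k$-order ideals $J_1, J_2$ with $J_1 \cap J_2 \subseteq J$. Suppose, for contradiction, $J_1 \not\subseteq J$ and $J_2 \not\subseteq J$. Picking $x \in J_1 \setminus J$ and $y \in J_2 \setminus J$, the hypothesis produces $n, m \in \mathbb{Z}^k$ and $z \in T_\Lambda \setminus J$ with ${}^{n}x \ge z$ and ${}^{m}y \ge z$. Since $J_i$ is a $\mathbb{Z}^k$-order ideal, ${}^{n}x \in J_1$ and ${}^{m}y \in J_2$, and downward-closure forces $z \in J_1 \cap J_2 \subseteq J$, contradicting $z \notin J$.

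For the forward implication, assume $J$ is prime and take $x, y \notin J$. Consider the $\mathbb{Z}^k$-order ideals $\langle x \rangle$ and $\langle y \rangle$ generated by $x$ and $y$; neither lies in $J$, so by primeness $\langle x \rangle \cap \langle y \rangle \not\subseteq J$ and one can pick $w \in \langle x \rangle \cap \langle y \rangle$ with $w \notin J$. By the description of the generated order ideal recalled before Lemma \ref{lem order ideal generated by HS subset}, there exist $n_1,\dots,n_N, m_1,\dots,m_M \in \mathbb{Z}^k$ with
\[
w \le \sum_{i=1}^N {}^{n_i}x \quad \text{and} \quad w \le \sum_{j=1}^M {}^{m_j}y.
\]
Now I would apply refinement iteratively: writing $w + u = \sum_i {}^{n_i}x$ and using that $T_\Lambda$ is a refinement monoid, decompose $w = \sum_{i=1}^N w_i$ with $w_i \le {}^{n_i}x$. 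Since $w \notin J$ and $J$ is downward-closed and closed under sums, at least one summand $w_{i_0} \notin J$. Apply the same refinement argument to the inequality $w_{i_0} \le w \le \sum_j {}^{m_j}y$ to obtain a decomposition $w_{i_0} = \sum_{j=1}^M w_{i_0,j}$ with $w_{i_0,j} \le {}^{m_j}y$; again some $z := w_{i_0,j_0} \notin J$. This element satisfies $z \le {}^{n_{i_0}}x$ and $z \le {}^{m_{j_0}}y$, as required with $n := n_{i_0}$ and $m := m_{j_0}$.

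The main (though mild) obstacle is packaging the iterative use of refinement to pass from an inequality involving a finite sum to an inequality involving a single translate; this is where the refinement property of $T_\Lambda$ is essential and where an induction on $N$ (respectively $M$) cleanly delivers the required decomposition.
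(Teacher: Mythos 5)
Your proof is correct, and the forward direction takes a genuinely different route from the paper's. For the reverse direction (the stated condition implies primeness) your argument is exactly the one the paper intends, which it simply defers to the directed-graph case \cite[Lemma 5.5]{Hazrat}. For the forward direction, however, the paper does not stay inside the monoid: it transports $J$ across the lattice isomorphisms of Proposition \ref{pro the lattice isomorphism} to the graded ideal $I(H_{_J})$ of $\KP_\mathsf{k}(\Lambda)$, shows that ideal is graded prime (using that graded ideals are idempotent), invokes \cite[Proposition 4.1]{Larki} to conclude that $\Lambda^0\setminus H_{_J}$ is a maximal tail, and then uses the maximal-tail property to produce a single \emph{vertex} $u$ with ${}^{n}x,{}^{m}y\ge u$. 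You instead apply primeness directly to the order ideals $\langle x\rangle$ and $\langle y\rangle$ and extract the common lower bound $z$ by two applications of the Riesz decomposition property, which holds because $T_\Lambda$ is a refinement monoid (Theorem \ref{th isomorphism between talented monoid and type monoid}). This is more self-contained and strictly more general: it proves the statement for any conical refinement $\Gamma$-monoid and avoids the detour through Kumjian--Pask algebras and maximal tails; the price is that your $z$ is an arbitrary element of $T_\Lambda\setminus J$ rather than a vertex, but the statement does not require more. Two small points you should make explicit: first, the description $\langle x\rangle=\{w\mid w\le \sum_{i=1}^{N}{}^{n_i}x\}$ is recalled in the paper only for ideals generated by subsets of $\Lambda^0$, so you need the one-line check that the same description gives the $\mathbb{Z}^k$-order ideal generated by an arbitrary element (closure under addition, the action, and down-sets is immediate); second, the induction deriving Riesz decomposition (if $a\le b_1+\cdots+b_N$ then $a=a_1+\cdots+a_N$ with $a_i\le b_i$) from the refinement property should be spelled out or cited, though it is standard.
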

\begin{proof}
The sufficiency part follows mutatis-mutandis the proof of \cite[Lemma 5.5]{Hazrat}. For the other way round suppose $J$ is any prime $\mathbb{Z}^k$-order ideal of $T_\Lambda$. We claim that $I(H_{_J})$ is a graded prime ideal of $\KP_\mathsf{k}(\Lambda)$, where $\mathsf{k}$ is any field. Assume that $I_1,I_2$ are graded ideals of $\KP_\mathsf{k}(\Lambda)$ such that $I_1I_2\subseteq I(H_{_J})$. Using the lattice isomorphism between $\mathcal{L}(T_\Lambda)$ and $\mathcal{L}^{\gb}(\KP_\mathsf{k}(\Lambda))$, we can find $\mathbb{Z}^k$-order ideals $J_1,J_2$ of $T_\Lambda$ such that $I_i=I(H_{J_i})$ for $i=1,2$. Since any graded ideal of $\KP_\mathsf{k}(\Lambda)$ is idempotent (see \cite[Theorem 5.1 and Lemma 5.4]{Pino}), so $I_1I_2=I_1\cap I_2$. Then 
\[I(H_{J_1\cap J_2})=I(H_{J_1})\cap I(H_{J_2})=I_1\cap I_2=I_1I_2\subseteq I(H_{_J})\]
which implies $J_1\cap J_2\subseteq J$. Hence $J_1\subseteq J$ or $J_2\subseteq J$ since $J$ is a prime order ideal. So $I_1\subseteq I(H_{_J})$ or $I_2\subseteq I(H_{_J})$ and our claim is established. It follows that $\Lambda^0\setminus H_{_J}$ is a maximal tail (see \cite[Definition 3.10]{Kang-Pask}) by \cite[Proposition 4.1]{Larki}. Let $x=\displaystyle{\sum_{i=1}^{p}} v_i(n_i)$, $y=\displaystyle{\sum_{j=1}^{q}} w_j(m_j) \in T_\Lambda\setminus J$. Since $J$ is an order-ideal and $J=\langle H_{_J}\rangle$ (by Theorem \ref{pro the lattice isomorphism}), so there must exist indices $i\in \{1,2,...,p\}$ and $j\in \{1,2,...,q\}$ such that $v_i,w_j\in \Lambda^0\setminus H_{_J}$. By the definition of maximal tail, there exists $u\in \Lambda^0\setminus H_{_J}$ such that $v_i\Lambda u,w_j\Lambda u\neq \emptyset$. Choose $\lambda\in v_i\Lambda u$ and $\mu\in w_j\Lambda u$. Then in $T_\Lambda$, $x\ge v_i(n_i)\ge u(d(\lambda)+n_i)$ and $y\ge w_j(m_j)\ge u(d(\mu)+m_j)$. Finally letting $n:=-(d(\lambda)+n_i)$, $m:=-(d(\mu)+m_j)$ we have $^{n}x$, $^{m}y\ge u$. This completes the proof.
\end{proof}

Let $v\in \Lambda^0$ and $\overline{v}$ be the smallest hereditary and saturated subset containing $v$. An explicit description of $\overline{v}$ can be found in \cite[Lemma 5.1]{Raeburn}. However for the sake of completion, we exhibit a way of constructing $\overline{v}$. 

For any subset $X\subseteq \Lambda^0$ and $n\in \mathbb{N}^k$, we denote
\begin{center}
    $X^n:=\{u\in \Lambda^0~|~s(u\Lambda^n)\subseteq X\}$
\end{center}
and set $\mathcal{S}(X):=\displaystyle{\bigcup_{n\in \mathbb{N}^k}} X^n$. Now for $v\in \Lambda^0$, let $X_0:=s(v\Lambda)$ and $X_{i+1}:=\mathcal{S}(X_i)$ for all $i\in \mathbb{N}$. 
\begin{lem}\label{lem the construction of HS subset containg v}
With notations as above, $\overline{v}=\displaystyle{\bigcup_{i\in \mathbb{N}}} X_i$ for any $v\in \Lambda^0$.
\end{lem}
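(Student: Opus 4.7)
The plan is to verify that $Y:=\bigcup_{i\in\NN} X_i$ satisfies all four conditions defining $\overline v$: it contains $v$, it is hereditary, it is saturated, and it is contained in every hereditary saturated subset of $\Lambda^0$ containing $v$. The containment $v\in Y$ is immediate since $v\in v\Lambda$ (the identity morphism at $v$ is a path of degree $0$ in $v\Lambda$), so $v=s(v)\in s(v\Lambda)=X_0\subseteq Y$.

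The main technical step is heredity of $Y$, which I would prove by induction on $i$ that each $X_i$ is hereditary. For $X_0$: if $u=s(\mu)$ for some $\mu\in v\Lambda$ and $\lambda\in u\Lambda$, then $\mu\lambda\in v\Lambda$ with $s(\mu\lambda)=s(\lambda)\in X_0$. For the inductive step, assume $X_i$ is hereditary and let $u\in X_{i+1}=\mathcal{S}(X_i)$, so $s(u\Lambda^n)\subseteq X_i$ for some $n\in\NN^k$. Take any $\lambda\in u\Lambda$ and set $p:=n\vee d(\lambda)$ and $n':=p-d(\lambda)$. The unique factorization property writes every $\alpha\in u\Lambda^p$ as $\alpha=\alpha(0,n)\alpha(n,p)$ with $\alpha(0,n)\in u\Lambda^n$, so $r(\alpha(n,p))=s(\alpha(0,n))\in X_i$, and by the induction hypothesis $s(\alpha)=s(\alpha(n,p))\in X_i$. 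In particular, for each $\mu\in s(\lambda)\Lambda^{n'}$ we have $\lambda\mu\in u\Lambda^p$, so $s(\mu)=s(\lambda\mu)\in X_i$; hence $s(s(\lambda)\Lambda^{n'})\subseteq X_i$, which means $s(\lambda)\in\mathcal{S}(X_i)=X_{i+1}$. This is the subtlest part, since the defining property of $X_{i+1}$ only involves one degree $n$, and one must use factorization together with heredity of $X_i$ to transport the information along $\lambda$.

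Saturation of $Y$ is easier: if $u\in\Lambda^0$ satisfies $s(u\Lambda^n)\subseteq Y$ for some $n$, then row-finiteness of $\Lambda$ makes $u\Lambda^n$ finite, so some single $X_N$ already contains $s(u\Lambda^n)$, giving $u\in\mathcal{S}(X_N)=X_{N+1}\subseteq Y$. For minimality, if $H\in\mathcal{H}(\Lambda)$ with $v\in H$, an induction shows $X_i\subseteq H$: heredity of $H$ gives $X_0=s(v\Lambda)\subseteq H$, and if $X_i\subseteq H$ then for any $u\in X_{i+1}$ we have $s(u\Lambda^n)\subseteq X_i\subseteq H$ for some $n$, so saturation of $H$ yields $u\in H$. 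Therefore $Y\subseteq H$, and combining the four parts gives $Y=\overline v$.
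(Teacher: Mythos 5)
Your proposal is correct and follows essentially the same route as the paper: an induction showing each $X_i$ is hereditary (using the same factorization of $\lambda\mu$ at degree $n$ to transport membership in $X_i$ along $\lambda$), followed by saturation of the union via row-finiteness and the increasing chain $X_i\subseteq X_{i+1}$, and minimality by induction. Your saturation step is in fact slightly cleaner than the paper's (you land directly in $X_{N+1}$ rather than arguing that $u$ already lies in $X_{t+1}$), but this is a minor streamlining, not a different argument.
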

\begin{proof}
It is easy to observe that any hereditary saturated subset containing $v$ indeed contains $\displaystyle{\bigcup_{i\in \mathbb{N}}} X_i$. So if we can show that $\displaystyle{\bigcup_{i\in \mathbb{N}}} X_i$ is hereditary and saturated then we are done. First we show that $X_i$ is hereditary for each $i\in \mathbb{N}$ and this will establish that their union is hereditary. For this we use induction. Clearly $X_0=s(v\Lambda)$ is hereditary. Suppose $X_i$ is hereditary for some $i\ge 0$ and $r(\lambda)\in X_{i+1}$ for some $\lambda\in \Lambda$. Then $r(\lambda)\in X_i^n$ for some $n\in \mathbb{N}^k$. Then $s(\alpha)\in X_i$ for all $\alpha\in r(\lambda)\Lambda^n$. Let $m:=(n\vee d(\lambda))-d(\lambda)$. We claim that $s(\mu)\in X_i$ for all $\mu\in s(\lambda)\Lambda^m$. Let $\mu\in s(\lambda)\Lambda^m$. Then $\lambda\mu\in r(\lambda)\Lambda^{n\vee d(\lambda)}$. By factorization property, there exists unique $\nu_1,\nu_2\in \Lambda$ such that $\lambda\mu=\nu_1\nu_2$ with $d(\nu_1)=n$ and $d(\nu_2)=(n\vee d(\lambda))-n$. Then $\nu_1\in r(\lambda)\Lambda^n$ and so $r(\nu_2)=s(\nu_1)\in X_i$ which implies $s(\mu)=s(\nu_2)\in X_i$ since $X_i$ is hereditary. This shows that $s(\lambda)\in X_i^m\subseteq X_{i+1}$. Therefore $X_{i+1}$ is hereditary. Next we show that $\displaystyle{\bigcup_{i\in \mathbb{N}}} X_i$ is saturated. Let $u\in \Lambda^0$ be such that $s(u\Lambda^m))\subseteq \displaystyle{\bigcup_{i\in \mathbb{N}}} X_i$ for some $m\in \mathbb{N}^k$. Since $\Lambda$ is row-finite and $X_i\subseteq X_{i+1}$ so there exists $t\in \mathbb{N}$ such that $s(\alpha)\in X_{t+1}$ for all $\alpha\in u\Lambda^m$ (note that if $s(u\Lambda^m)\subseteq X_0$ then $u\in X_0^m\subseteq X_1\subseteq \displaystyle{\bigcup_{i\in \mathbb{N}}} X_i$ and so we only consider the case where $s(u\Lambda^m)\subseteq X_j$ for some $j\ge 1$). Now for each $\alpha\in u\Lambda^m$, there is $n_\alpha\in \mathbb{N}^k$ such that $s(\alpha)\in X_t^{n_\alpha}$. Set $n:=m+\displaystyle{\bigvee_{\alpha\in u\Lambda^m}} n_\alpha$. We will show that $u\in X_t^n$. So take any $\lambda\in u\Lambda^n$. Let $\mu:=\lambda(0,m)$. Since $\mu\in u\Lambda^m$ we have $s(\mu)\in X_t^{n_\mu}$. Applying unique factorization again, we obtain unique $\gamma_1,\gamma_2\in \Lambda$ such that $\lambda(m,n)=\gamma_1\gamma_2$ with $d(\gamma_1)=n_\mu$ and $d(\gamma_2)=n-m-n_\mu$. Then $\gamma_1\in s(\mu)\Lambda^{n_\mu}$ which implies $r(\gamma_2)=s(\gamma_1)\in X_t$ and consequently $s(\gamma_2)\in X_t$ since $X_t$ is hereditary. Now $s(\lambda)=s(\lambda(m,n))=s(\gamma_2)\in X_t$. Hence we have $u\in X_t^n\subseteq X_{t+1}\subseteq \displaystyle{\bigcup_{i\in \mathbb{N}}} X_i$ and so $\displaystyle{\bigcup_{i\in \mathbb{N}}}X_i$ is saturated. 
\end{proof}

Now we are ready to characterize cofinality of $\Lambda$ using the property of the monoid $T_\Lambda$.
\begin{prop}\label{pro sufficient condition for cofinality}
Let $\Lambda$ be a row-finite $k$-graph without sources. Then $\Lambda$ is cofinal if and only if $T_\Lambda$ is a simple $\mathbb{Z}^k$-monoid.    
\end{prop}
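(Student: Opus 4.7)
The necessity direction is already Proposition \ref{pro necessary condition for cofinality}, so the work is entirely in the converse: if $T_\Lambda$ is simple as a $\mathbb{Z}^k$-monoid, then $\Lambda$ is cofinal. My plan is to go through the lattice isomorphism of Proposition \ref{pro the lattice isomorphism}: simplicity of $T_\Lambda$ translates into the statement that $\mathcal{H}(\Lambda)=\{\emptyset,\Lambda^0\}$, so it suffices to prove the purely $k$-graph-theoretic implication that triviality of $\mathcal{H}(\Lambda)$ forces cofinality.

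Given $v\in \Lambda^0$ and $x\in \Lambda^\infty$, I seek $n\in \mathbb{N}^k$ with $v\Lambda x(n)\neq \emptyset$. The idea is to attach to $x$ the subset
\[H_x := \{u\in \Lambda^0 : u\Lambda x(n) = \emptyset \text{ for all } n\in \mathbb{N}^k\},\]
the set of vertices that cannot reach any tail-vertex of $x$, and to show $H_x\in \mathcal{H}(\Lambda)$. Since the identity morphism at $x(0)$ lies in $x(0)\Lambda x(0)$, the vertex $x(0)$ is not in $H_x$, so $H_x$ is proper. By hypothesis this forces $H_x=\emptyset$, whence $v\notin H_x$, giving the required $n$.

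Thus the substantive work is to verify $H_x$ is hereditary and saturated. Hereditariness is immediate: if $r(\lambda)\in H_x$ and some $\mu\in s(\lambda)\Lambda x(n)$ exists, then $\lambda\mu\in r(\lambda)\Lambda x(n)$ contradicts $r(\lambda)\in H_x$. Saturation is the main obstacle: suppose $s(u\Lambda^m)\subseteq H_x$ yet $\mu\in u\Lambda x(n)$ witnesses $u\notin H_x$. The degree $d(\mu)$ need not dominate $m$, so a direct factorization of $\mu$ through a path of degree $m$ is unavailable. To overcome this I use that $\Lambda$ has no sources, so the functor $x$ is defined on all of $\Omega_k$: for any $p\in \mathbb{N}^k$ the morphism $x((n,n+p))$ lies in $x(n)\Lambda^p$, and concatenation produces $\nu := \mu\cdot x((n,n+p))\in u\Lambda x(n+p)$ of degree $d(\mu)+p$. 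Choosing $p$ large enough that $d(\nu)\geq m$, the unique factorization property gives $\nu=\nu_1\nu_2$ with $d(\nu_1)=m$; then $r(\nu_2)=s(\nu_1)\in s(u\Lambda^m)\subseteq H_x$, while $\nu_2\in r(\nu_2)\Lambda x(n+p)$ is nonempty, a contradiction.

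The hard step is precisely this saturation argument, where the no-sources hypothesis is essential to stretch the witnessing path $\mu$ to a path of degree at least $m$ before unique factorization can be invoked. Once $H_x$ is shown to be a proper hereditary saturated subset, the remaining argument is a direct appeal to the lattice isomorphism of Proposition \ref{pro the lattice isomorphism}, closing the proof.
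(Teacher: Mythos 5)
Your proof is correct, but it takes a genuinely different route from the paper's. Both arguments reduce, via the lattice isomorphism of Proposition \ref{pro the lattice isomorphism}, to the purely combinatorial implication ``$\mathcal{H}(\Lambda)=\{\emptyset,\Lambda^0\}$ implies cofinality.'' From there you attach to the infinite path $x$ the set $H_x$ of vertices that reach no tail-vertex of $x$, verify it is hereditary and saturated (the saturation step, stretching the witness $\mu$ by $x((n,n+p))$ before invoking unique factorization, is exactly right), observe $x(0)\notin H_x$, and conclude $H_x=\emptyset$. The paper instead argues from the other side: simplicity gives $\overline{v}=\Lambda^0$ for every $v$, and then the explicit filtration $\overline{v}=\bigcup_i X_i$ of Lemma \ref{lem the construction of HS subset containg v} (with $X_0=s(v\Lambda)$ and $X_{i+1}=\mathcal{S}(X_i)$) is used to walk down the path $x$ in finitely many stages until some $x(n)$ lands in $s(v\Lambda)$. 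Your version is more self-contained --- it needs only the lattice isomorphism and not the auxiliary construction of $\overline{v}$ --- and is the natural higher-rank analogue of the classical ``complement of the reachability set is hereditary and saturated'' argument from directed graphs; the paper's version gets to reuse a lemma it has already established for other purposes. One small quibble with your commentary rather than your proof: the existence of $x((n,n+p))$ does not actually require the no-sources hypothesis, since $x$ is by definition a functor on all of $\Omega_k$; the hypothesis is needed elsewhere (for the lattice isomorphism and the standing assumptions of the section), not to ``stretch'' the path.
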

\begin{proof}
The necessity is already proved in Proposition \ref{pro necessary condition for cofinality}. For the sufficiency assume that $T_\Lambda$ is a simple $\mathbb{Z}^k$-monoid. Let $x\in \Lambda^\infty$ be any infinite path and $v\in \Lambda^0$. Consider the nonzero $\mathbb{Z}^k$-order ideal $\langle v\rangle$. Then $\langle v\rangle=T_\Lambda$ and so $\overline{v}=H_{\langle \overline{v}\rangle}=H_{\langle v\rangle}=\Lambda^0$. Therefore $x(0)\in \overline{v}$. So $x(0)\in X_i$ for some $i\in \mathbb{N}$ by Lemma \ref{lem the construction of HS subset containg v}. If $x(0)\in X_0=s(v\Lambda)$ then clearly $v\Lambda x(0)\neq \emptyset$. Assume that $x(0)\in X_{j+1}$ for some $j\ge 0$. Then $x(0)\in \mathcal{S}(X_j)$. So there exists $n_1\in \mathbb{N}^k$ such that $x(0)\in X_j^{n_1}$. Hence $x(n_1)=s(x(0,n_1))\in X_j=\mathcal{S}(X_{j-1})$. Thus there exists $n_2\in \mathbb{N}^k$ such that $x(n_1)\in X_{j-1}^{n_2}$ which implies $x(n_1+n_2)=s(x(n_1,n_1+n_2))\in X_{j-1}$. Since $x$ is an infinite path we can continue this process and after $j$-steps we can find $n_j\in \mathbb{N}^k$ such that
\[x(n_1+n_2+...+n_j)=s(x(n_1+n_2+...+n_{j-1},n_1+n_2+...+n_j))\in X_1=\mathcal{S}(X_0).\] 
Now there exists $m\in \mathbb{N}^k$ such that $x(n_1+n_2+...+n_j+m)\in X_0$. Let $n=n_1+n_2+...+n_j+m$. Then we have $x(n)\in s(v\Lambda)$ and consequently $v\Lambda x(n)\neq \emptyset$. 
\end{proof} 

The following theorem gives a talented monoid criterion for graded basic ideal simplicity of $\KP_R(\Lambda)$. The implications $(iii)\Longleftrightarrow (iv)$ and $(i)\Longleftrightarrow (v)$ follow from \cite[Theorems 5.1, 5.13]{Pino}. Although $(i)\Longrightarrow (iii)$ is proved in \cite[Lemma 5.12]{Pino}, we here obtain an equivalence using the monoid $T_\Lambda$ as a bridge.

\begin{thm}\label{th graded basic simplicity of KPA}
The following statements are equivalent.

$(i)$ $\Lambda$ is cofinal.

$(ii)$ $T_\Lambda$ is a simple $\mathbb{Z}^k$-monoid.

$(iii)$ $\emptyset$ and $\Lambda^0$ are the only hereditary and saturated subsets of $\Lambda^0$.

$(iv)$ $\KP_R(\Lambda)$ is graded basic ideal simple.

$(v)$ The only basic ideal of $\KP_R(\Lambda)$ containing $p_v$ for some $v\in \Lambda^0$ is $\KP_R(\Lambda)$ itself.
\end{thm}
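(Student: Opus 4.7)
The proof is essentially an assembly of results already established in the paper together with the cited results from \cite{Pino}. My plan is to chain the equivalences through the monoid $T_\Lambda$ rather than to redo any of the hard work.

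First, the equivalence $(i)\Longleftrightarrow(ii)$ is precisely the content of Proposition~\ref{pro sufficient condition for cofinality}, which was proved using both directions: that cofinality forces $T_\Lambda$ to be a simple $\mathbb{Z}^k$-monoid (via \cite[Lemma 5.12]{Pino} and the lattice isomorphism), and conversely that simplicity of $T_\Lambda$ as a $\mathbb{Z}^k$-monoid forces every vertex of $\Lambda$ to lie in $\overline{v}$ for every $v\in\Lambda^0$, from which cofinality is extracted using Lemma~\ref{lem the construction of HS subset containg v}. So nothing new is required here.

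Next, the equivalence $(ii)\Longleftrightarrow(iii)$ is immediate from the lattice isomorphism $\mathcal{H}(\Lambda)\cong\mathcal{L}(T_\Lambda)$ obtained in Proposition~\ref{pro the lattice isomorphism}: a lattice with only two elements on one side corresponds to a lattice with only two elements on the other side. In detail, $T_\Lambda$ is simple as a $\mathbb{Z}^k$-monoid precisely when $\mathcal{L}(T_\Lambda)=\{\{0\},T_\Lambda\}$, and under the isomorphism $\rho(H)=\langle H\rangle$ the element $\{0\}$ corresponds to $\emptyset$ and $T_\Lambda=\langle\Lambda^0\rangle$ corresponds to $\Lambda^0$.

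For the remaining equivalences I would simply quote \cite[Theorems 5.1, 5.13]{Pino}. Theorem 5.1 of \cite{Pino} sets up the lattice isomorphism between hereditary saturated subsets and graded basic ideals of $\KP_R(\Lambda)$, from which $(iii)\Longleftrightarrow(iv)$ follows by the same two-element-lattice argument. Theorem 5.13 of \cite{Pino} directly records $(i)\Longleftrightarrow(v)$. Since none of $(i)$, $(iii)$, $(iv)$, $(v)$ requires any new argument beyond these citations, and $(ii)$ is tied into the chain via Propositions~\ref{pro sufficient condition for cofinality} and~\ref{pro the lattice isomorphism}, there is no genuine obstacle: the whole proof is a short concatenation of references. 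The only thing worth stressing is that $(ii)$ is the genuinely new link of the chain, and its role is to express an algebraic-geometric property of $\Lambda$ (cofinality) and of $\KP_R(\Lambda)$ (graded basic ideal simplicity) as an intrinsic, $\mathbb{Z}^k$-monoid-theoretic property of $T_\Lambda$, making $T_\Lambda$ a $\mathbb{Z}^k$-isomorphism invariant capturing these properties.
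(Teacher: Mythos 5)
Your proposal is correct and follows essentially the same route as the paper: the equivalence $(i)\Longleftrightarrow(ii)$ is Proposition~\ref{pro sufficient condition for cofinality}, $(ii)\Longleftrightarrow(iii)$ comes from the lattice isomorphism of Proposition~\ref{pro the lattice isomorphism}, and $(iii)\Longleftrightarrow(iv)$ together with $(i)\Longleftrightarrow(v)$ are quoted from \cite[Theorems 5.1, 5.13]{Pino}, exactly as the paper does. Nothing is missing.
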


\begin{cor}\label{cor basically simple}
Let $\Lambda$ be a row-finite $k$-graph without sources and $\mathsf{k}$ any field. Then the Kumjian-Pask algebra $\KP_\mathsf{k}(\Lambda)$ is graded simple if and only if $T_\Lambda$ is simple as a $\mathbb{Z}^k$-monoid.
\end{cor}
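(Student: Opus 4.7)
The plan is to reduce this corollary to the equivalence (ii)~$\Longleftrightarrow$~(iv) already established in Theorem \ref{th graded basic simplicity of KPA}, by observing that over a field every ideal of the Kumjian-Pask algebra is automatically basic. So effectively only one short observation is required, and no new combinatorics on $\Lambda$ enters.

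First I would recall the definition of a basic ideal in $\KP_R(\Lambda)$: an ideal $I$ is basic if, whenever $r p_v \in I$ for some $v \in \Lambda^0$ and some nonzero $r \in R$, then $p_v \in I$. When $R = \mathsf{k}$ is a field, any nonzero $r \in \mathsf{k}$ is invertible, so $p_v = r^{-1}(r p_v) \in I$ is automatic. Hence every ideal (and in particular every graded ideal) of $\KP_\mathsf{k}(\Lambda)$ is basic. Consequently, the notions of graded ideal simple and graded basic ideal simple coincide over a field.

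Given this observation, the corollary is immediate: the equivalence $(ii) \Longleftrightarrow (iv)$ in Theorem \ref{th graded basic simplicity of KPA} specialised to $R = \mathsf{k}$ reads that $T_\Lambda$ is a simple $\mathbb{Z}^k$-monoid if and only if $\KP_\mathsf{k}(\Lambda)$ is graded basic ideal simple, which by the preceding paragraph is the same as saying $\KP_\mathsf{k}(\Lambda)$ is graded simple.

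There is no real obstacle here; the only point worth stating explicitly is that the field hypothesis collapses ``basic'' into a triviality, so the result is a clean specialisation of the previous theorem. If one wanted a completely self-contained argument avoiding Theorem \ref{th graded basic simplicity of KPA}, one could instead chain the lattice isomorphism of Proposition \ref{pro the lattice isomorphism} (between $\mathcal{L}(T_\Lambda)$ and $\mathcal{H}(\Lambda)$) with the lattice isomorphism between $\mathcal{H}(\Lambda)$ and graded ideals of $\KP_\mathsf{k}(\Lambda)$ from \cite[Theorem 5.1]{Pino} (which, over a field, covers all graded ideals), obtaining the equivalence directly: $T_\Lambda$ has only the trivial $\mathbb{Z}^k$-order ideals iff $\Lambda^0$ has only the trivial hereditary saturated subsets iff $\KP_\mathsf{k}(\Lambda)$ has only the trivial graded ideals.
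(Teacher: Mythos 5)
Your proposal is correct and matches the paper's intended argument: the corollary is stated without proof immediately after Theorem \ref{th graded basic simplicity of KPA}, precisely because over a field every (graded) ideal of $\KP_\mathsf{k}(\Lambda)$ is automatically basic, so the equivalence $(ii)\Longleftrightarrow(iv)$ specialises to the stated claim. Your observation that invertibility of nonzero scalars collapses the basic condition is exactly the point, and the alternative route via Proposition \ref{pro the lattice isomorphism} is also sound.
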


As the structure of any $\mathbb{Z}^k$-order ideal of $T_\Lambda$ is now clear, we next focus on the quotient of $T_\Lambda$ by any such ideal.

Let $H\in \mathcal{H}(\Lambda)$. Define $\rho:T_\Lambda\longrightarrow T_{\Lambda/H}$ on generators as follows:
$$v(n)\longmapsto
	\left\{
	\begin{array}{ll}
		v(n)  & \mbox{if } v\notin H\\
		0  & \mbox{otherwise } 
	\end{array}
	\right.$$ 
and then extend linearly. Since $H$ is hereditary and saturated one can check that $\rho$ is a well-defined surjective homomorphism of $\mathbb{Z}$-monoids. 
\begin{lem}\label{lem quotient k-graph isomorphism}
Let $H$ be any hereditary saturated subset of $\Lambda^0$. Suppose $\equiv_{H}$ is the congruence on $T_\Lambda$ defined by $a\equiv_{H} b$ if and only if $a+x=b+y$ for some $x,y\in \langle H\rangle$. Then $T_\Lambda/\equiv_{H}~\cong T_{\Lambda/H}$.    
\end{lem}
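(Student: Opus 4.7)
The plan is to build the isomorphism from the map $\rho$ defined just before the statement and show that its kernel congruence is exactly $\equiv_H$. Concretely, I will construct mutually inverse $\mathbb{Z}^k$-monoid homomorphisms $\bar{\rho} : T_\Lambda/{\equiv_H} \longrightarrow T_{\Lambda/H}$ and $\sigma : T_{\Lambda/H} \longrightarrow T_\Lambda/{\equiv_H}$.

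First I would verify that $\rho$ sends $\langle H \rangle$ to $0$: on generators $v(n)$ with $v\in H$ we have $\rho(v(n))=0$ by definition, and any element of $\langle H\rangle$ is bounded by a finite sum $\sum ~^{n_i}v_i$ with $v_i\in H$, which forces its image to vanish. Hence $\rho$ descends to a surjective $\mathbb{Z}^k$-monoid homomorphism $\bar{\rho}: T_\Lambda/{\equiv_H} \longrightarrow T_{\Lambda/H}$ with $\bar{\rho}([v(n)])=v(n)$ for $v\notin H$ and $\bar{\rho}([v(n)])=0$ for $v\in H$.

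For the inverse, define $\sigma$ on the generators of $T_{\Lambda/H}$ by $v(n) \longmapsto [v(n)]_{\equiv_H}$ for $v\in \Lambda^0\setminus H$. The key step is checking compatibility with the defining relations of $T_{\Lambda/H}$. Fix $v\notin H$ and $i\in\{1,\dots,k\}$; since $\Lambda$ has no sources, $v\Lambda^{e_i}\neq \emptyset$, and by saturation of $H$ the assumption $v\notin H$ forces $v(\Lambda/H)^{e_i}=\{\alpha\in v\Lambda^{e_i} : s(\alpha)\notin H\}\neq \emptyset$. In $T_\Lambda$ the defining relation reads
\[
v(n) \;=\; \sum_{\alpha\in v\Lambda^{e_i},\, s(\alpha)\notin H} s(\alpha)(n+e_i) \;+\; \sum_{\alpha\in v\Lambda^{e_i},\, s(\alpha)\in H} s(\alpha)(n+e_i).
\]
The second summand lies in $\langle H\rangle$, so modulo $\equiv_H$ we obtain exactly the defining relation of $T_{\Lambda/H}$. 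By the universal property (the description of $T_\Lambda$ as a commutative monoid freely generated by $\{v(n)\}$ subject to those relations), $\sigma$ is a well-defined monoid homomorphism, and it manifestly commutes with the $\mathbb{Z}^k$-action on generators.

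Finally, I would check $\bar\rho\circ\sigma=\mathrm{id}$ and $\sigma\circ\bar\rho=\mathrm{id}$. On generators $v(n)$ with $v\notin H$ both compositions fix $v(n)$; on generators $v(n)$ with $v\in H$ we have $\bar\rho([v(n)])=0$ and $[v(n)]_{\equiv_H}=0$ since $v(n)\in \langle H\rangle$, so $\sigma\circ\bar\rho$ is still the identity on such classes. The main obstacle is the well-definedness of $\sigma$, i.e.\ the identification of the ``extra'' terms indexed by $s(\alpha)\in H$ as elements of $\langle H\rangle$ together with the use of the saturation of $H$ to guarantee $v(\Lambda/H)^{e_i}\neq\emptyset$ whenever $v\notin H$; everything else is a routine diagram chase.
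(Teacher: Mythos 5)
Your proposal is correct, but it takes a genuinely different route from the paper. The paper keeps only the forgetful map $\rho$ and proves directly that its kernel congruence equals $\equiv_H$: the hard inclusion ($\rho(a)=\rho(b)\Rightarrow a\equiv_H b$) is handled by splitting $a=a'+h_1$, $b=b'+h_2$ with $h_1,h_2$ supported in $H$, transporting the equality $a'=b'$ to $M_{\overline{\Lambda/H}}$, invoking the Confluence Lemma \ref{lem the confluence lemma} to produce a common target $c$, and then lifting the two transformation chains back to $\mathbb{F}_{\overline{\Lambda}}$, where each step picks up an error term supported in $H\times\mathbb{Z}^k$; this yields $a'+h_b=b'+h_a$ in $M_{\overline{\Lambda}}$ and hence $a\equiv_H b$. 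You instead build an explicit inverse $\sigma:T_{\Lambda/H}\to T_\Lambda/\equiv_H$ via the presentation of $T_{\Lambda/H}$, the only real work being that the relation $v(n)=\sum_{\alpha\in v\Lambda^{e_i}}s(\alpha)(n+e_i)$ in $T_\Lambda$ differs from the corresponding relation of $T_{\Lambda/H}$ by a summand lying in $\langle H\rangle$ (with saturation guaranteeing $\Lambda/H$ has no sources, and heredity guaranteeing $\rho$ kills the discarded terms). Your argument is more elementary and self-contained --- it avoids the confluence machinery entirely and reduces everything to universal properties and a check on generators --- whereas the paper's kernel computation gives slightly more explicit information about how two elements with the same image differ (namely by error terms produced along a common refinement), which is in the same spirit as the other confluence-based arguments used throughout Section \ref{sec cofinality}. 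One small point worth making explicit in your write-up: the step ``$\rho$ kills $\langle H\rangle$'' uses conicality of $T_{\Lambda/H}$ (an element $\le 0$ must be $0$), which is available since $\Lambda/H$ is again row-finite without sources.
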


\begin{proof}
It suffices to show that the kernel congruence $Ker(\rho)=~\equiv_H$. If $a\equiv_H b$ then it is easy to show that $\rho(a)=\rho(b)$ and hence $(a,b)\in Ker(\rho)$. Let $(a,b)\in Ker(\rho)$. Let $a=a'+h_1$ and $b=b'+h_2$ be any two representations of $a,b\in T_\Lambda$ such that the vertices appearing in $h_1,h_2$ are in $H$ and the vertices in $a',b'$ are outside $H$. Then in $T_{\Lambda/H}$, $a'=b'$. Passing this equality to $M_{\overline{\Lambda/H}}$ via the natural isomorphism, we can find $c\in \mathbb{F}_{{\overline{\Lambda/H}}}$ such that $a'\longrightarrow c$ and $b'\longrightarrow c$ (note that we are making some abuse of notation for simplicity). Clearly $\supp(a')$, $\supp(b')$ and $\supp(c)\subseteq (\Lambda^0\setminus H)\times \mathbb{Z}^k$. Let us analyze $a'\longrightarrow c$. It implies that there is a chain of arrows:
\begin{center}
    $a'\longrightarrow_{i_1}\alpha_1\longrightarrow_{i_2}\alpha_2\longrightarrow_{i_3}\cdot\cdot\cdot\longrightarrow_{i_l}\alpha_l=c$ in $\mathbb{F}_{\overline{\Lambda/H}}$. 
\end{center}
Then $a'=a''+a_0$ for some $a_0\in (\Lambda^0\setminus H)\times \mathbb{Z}^k$ and $\alpha_1=a''+\mathfrak{s}^{i_1}(a_0)$ where $\mathfrak{s}^{i_1}(a_0)=\displaystyle{\sum_{\lambda\in a_0\overline{\Lambda/H}^{e_{i_1}}}} \overline{s}(\lambda)$. Therefore in $\mathbb{F}_{\overline{\Lambda}}$, $a'\longrightarrow_{i_1}\alpha_1+h_{\alpha_1}$ for some $h_{\alpha_1}$ with $\supp(h_{\alpha_1})\subseteq H\times \mathbb{Z}^k$. Inductively we can show that $a'\longrightarrow c+h_a$ for some $h_a$ with $\supp(h_a)\subseteq H\times \mathbb{Z}^k$. Similarly $b'\longrightarrow c+h_b$ in $\mathbb{F}_{\overline{\Lambda}}$ for some $h_b$ with $\supp(h_b)\subseteq H\times \mathbb{Z}^k$. Then $a'+h_b,b'+h_a\longrightarrow c+h_a+h_b$ and so $a'+h_b=b'+h_a$ in $M_{\overline{\Lambda}}$. Now returning to $T_\Lambda$ via the inverse of the natural isomorphism, we have $a'+h_b=b'+h_a$ in $T_\Lambda$ (here also we make abuse of notation). Hence $a+h_2+h_b=b+h_1+h_a$. Finally since $h_a+h_1,h_b+h_2\in \langle H\rangle$, so $a\equiv_H b$. 
\end{proof}

\begin{thm}\label{th lattice isomorphism between basic ideals and HS subsets}
Let $\Lambda$ be a row-finite $k$-graph without sources and $R$ any unital commutative ring. Suppose $\mathbb{Z}^k$ acts freely on every quotient of $T_\Lambda$. Then $\mathcal{L}(T_\Lambda)$ is lattice isomorphic with $\mathcal{L}^b(\KP_R(\Lambda))$, the lattice of all basic ideals of $\KP_R(\Lambda)$. Consequently, $T_\Lambda$ is $\mathbb{Z}^k$-simple if and only if $\KP_R(\Lambda)$ is basic ideal simple.
\end{thm}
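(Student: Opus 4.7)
The overall strategy is to use the already-established lattice isomorphism between $\mathcal{H}(\Lambda)$, $\mathcal{L}(T_\Lambda)$ and $\mathcal{L}^{\gb}(\KP_R(\Lambda))$, and then to show that under the given hypothesis the \emph{basic} ideals of $\KP_R(\Lambda)$ already coincide with the \emph{graded} basic ideals. The second conclusion is then a formality, since $\mathbb{Z}^k$-simplicity and basic ideal simplicity are both translated as the lattices in question consisting only of two elements.

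First I would upgrade the monoid hypothesis to strong aperiodicity of $\Lambda$. By Proposition \ref{pro the lattice isomorphism}, every $\mathbb{Z}^k$-order ideal of $T_\Lambda$ is of the form $\langle H\rangle$ for a unique $H\in\mathcal{H}(\Lambda)$; Lemma \ref{lem quotient k-graph isomorphism} then identifies the quotient $T_\Lambda/\langle H\rangle$ (as a $\mathbb{Z}^k$-monoid) with $T_{\Lambda/H}$. So the standing assumption that $\mathbb{Z}^k$ acts freely on every quotient of $T_\Lambda$ is exactly the assertion that $\mathbb{Z}^k$ acts freely on $T_{\Lambda/H}$ for every $H\in\mathcal{H}(\Lambda)$. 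By the forthcoming Proposition \ref{pro sufficient condition of aperiodicity} (free $\mathbb{Z}^k$-action on the talented monoid implies aperiodicity), each quotient $k$-graph $\Lambda/H$ is therefore aperiodic, i.e., $\Lambda$ is strongly aperiodic.

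Next I would show that for such $\Lambda$ every basic ideal of $\KP_R(\Lambda)$ is graded, so that $\mathcal{L}^b(\KP_R(\Lambda))=\mathcal{L}^{\gb}(\KP_R(\Lambda))$. Given a basic ideal $I$, set $H_I:=\{v\in\Lambda^0 \mid p_v\in I\}$; this is hereditary and saturated, and the generated graded basic ideal $I(H_I)$ is contained in $I$. The quotient algebra $\KP_R(\Lambda)/I(H_I)$ is canonically isomorphic to the Kumjian--Pask algebra $\KP_R(\Lambda/H_I)$ of the aperiodic row-finite quotient $k$-graph $\Lambda/H_I$ (Pino's quotient construction, \cite[\S 5]{Pino}). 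Since $\Lambda/H_I$ is aperiodic and $I/I(H_I)$ is a basic ideal of $\KP_R(\Lambda/H_I)$ containing no vertex idempotent, the Cuntz--Krieger uniqueness theorem for Kumjian--Pask algebras \cite[Theorem 4.7]{Pino} forces $I/I(H_I)=0$, i.e., $I=I(H_I)$ is graded.

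Combining these two steps, the map $H\mapsto I(H)$, which by \cite[Theorem 5.1]{Pino} is a lattice isomorphism $\mathcal{H}(\Lambda)\xrightarrow{\cong}\mathcal{L}^{\gb}(\KP_R(\Lambda))$, becomes a lattice isomorphism onto $\mathcal{L}^b(\KP_R(\Lambda))$. Composing with the isomorphism of Proposition \ref{pro the lattice isomorphism} yields the desired $\mathcal{L}(T_\Lambda)\cong\mathcal{L}^b(\KP_R(\Lambda))$. The simplicity statement is immediate: $T_\Lambda$ is $\mathbb{Z}^k$-simple precisely when $\mathcal{L}(T_\Lambda)=\{\{0\},T_\Lambda\}$, which under the above isomorphism is the same as $\mathcal{L}^b(\KP_R(\Lambda))=\{0,\KP_R(\Lambda)\}$, i.e., basic ideal simplicity of $\KP_R(\Lambda)$.

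The main obstacle is the second step: promoting a basic ideal to a graded one. It hinges on the Cuntz--Krieger uniqueness theorem being available for the quotient $k$-graph $\Lambda/H_I$, which is why strong aperiodicity (rather than merely aperiodicity of $\Lambda$ itself) is essential. Once this ingredient is in place, the rest of the argument is purely formal and reduces to chaining the lattice isomorphisms already set up in the paper.
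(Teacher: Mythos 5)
Your proposal is correct and follows essentially the same route as the paper: both arguments first use Proposition \ref{pro the lattice isomorphism} and Lemma \ref{lem quotient k-graph isomorphism} to translate the free-action hypothesis into aperiodicity of every quotient $\Lambda/H$ via Proposition \ref{pro sufficient condition of aperiodicity}, and then invoke the resulting strong aperiodicity to identify basic ideals with graded basic ideals. The only difference is that where the paper simply cites \cite[Corollary 5.7]{Pino} for this last step, you reprove it inline via the Cuntz--Krieger uniqueness theorem applied to $\KP_R(\Lambda/H_I)$, which is a correct (and self-contained) rendering of the same ingredient.
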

\begin{proof}
Let $H\in \mathcal{H}(\Lambda)$. By the hypothesis $\mathbb{Z}^k$ acts freely on $T_\Lambda/\equiv_H$ which, by Lemma \ref{lem quotient k-graph isomorphism}, is isomorphic to $T_{\Lambda/H}$. Hence $\Lambda/H$ is aperiodic by Proposition \ref{pro sufficient condition of aperiodicity}. Now the conclusion follows by applying \cite[Corollary 5.7]{Pino} together with Proposition \ref{pro the lattice isomorphism}. 
\end{proof}

\section{Atoms and aperiodic elements of $T_\Lambda$}\label{sec aperiodicity and KP algebra}
In this section we first derive a sufficient condition for aperiodicity of $\Lambda$ purely in terms of the talented monoid $T_\Lambda$.

\begin{dfn}\label{def periodic pair}
Let $(\Lambda,d)$ be a finitely aligned $k$-graph. Let $\alpha,\beta$ be distinct paths in $\Lambda$ with $s(\alpha)=s(\beta)$. We say that $(\alpha,\beta)$ is a \textit{periodic pair} in $\Lambda$ if for any $\eta\in \Lambda$ with $r(\eta)=s(\alpha)=s(\beta)$ we have $\Lambda^{\min}(\alpha\eta,\beta\eta)\neq \emptyset$.
\end{dfn}

Note that a $k$-graph $\Lambda$ is aperiodic in the sense of Lewin and Sims (see Definition \ref{def LS convention}) if and only if there is no periodic pair in $\Lambda$. We prove that if there is a periodic pair in a row-finite $k$-graph $\Lambda$ then $T_\Lambda$ has a nonzero periodic element with respect to the action of $\mathbb{Z}^k$. It gives us the following talented monoid criterion of aperiodicity which can be seen as a partial generalization of \cite[Corollary 4.3 $(i)$]{Hazrat}.

\begin{prop}\label{pro sufficient condition of aperiodicity}
Let $\Lambda$ be a row-finite $k$-graph such that $\mathbb{Z}^k$ acts freely on $T_\Lambda$. Then $\Lambda$ is aperiodic.    
\end{prop}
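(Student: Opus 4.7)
I will prove the contrapositive: if $\Lambda$ is not aperiodic (Lewin--Sims, Definition \ref{def LS convention}), then $T_\Lambda$ contains a nonzero $\mathbb{Z}^k$-periodic element. Non-aperiodicity furnishes a \emph{periodic pair} $(\alpha, \beta)$---distinct paths with $s(\alpha) = s(\beta) =: v$ and $\Lambda^{\min}(\alpha\eta, \beta\eta) \neq \emptyset$ for every $\eta \in v\Lambda$. Set $p = d(\alpha)$ and $q = d(\beta)$. Taking $\eta = v$ and applying unique factorization, I obtain $r(\alpha) = r(\beta)$ and $p \neq q$ (if $p = q$, then any element of $\Lambda^{\min}(\alpha, \beta)$ would have trivial degree, collapsing to $\alpha = \beta$).

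The heart of the argument is the combinatorial identity
\[
\MCE(\alpha, \beta) \;=\; \alpha \cdot v\Lambda^{p\vee q - p} \;=\; \beta \cdot v\Lambda^{p\vee q - q}.
\]
Only the inclusion $\supseteq$ in the first equality is nontrivial: I must show that \emph{every} $\sigma \in v\Lambda^{p\vee q - p}$ satisfies $\alpha\sigma \in \MCE(\alpha, \beta)$. The argument invokes the periodic pair condition at $\eta = \sigma$, producing $\xi, \xi'$ with $\alpha\sigma\xi = \beta\sigma\xi'$. Truncating both sides at degree $q$, the right-hand side is literally $\beta$, while iterated unique factorization gives $(\alpha\sigma\xi)[0,q] = (\alpha\sigma)[0,q]$ (since $q \leq p \vee q = d(\alpha\sigma)$). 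Hence $(\alpha\sigma)[0,q] = \beta$; together with $(\alpha\sigma)[0,p] = \alpha$ and $d(\alpha\sigma) = p \vee q$, this places $\alpha\sigma$ in $\MCE(\alpha, \beta)$. The identity then yields a source-preserving bijection $v\Lambda^{p\vee q - p} \leftrightarrow v\Lambda^{p\vee q - q}$, sending $\sigma$ to the unique $\sigma'$ with $\alpha\sigma = \beta\sigma'$; source-preservation is automatic since $s(\sigma) = s(\alpha\sigma) = s(\beta\sigma') = s(\sigma')$.

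Given this bijection, I expand $v(p)$ and $v(q)$ to degree $p \vee q$ using the extended form of the defining relations (equation (\ref{general relation}), justified in the absence of sources by Lemma \ref{lem two sum equivalent}):
\[
v(p) = \sum_{\sigma \in v\Lambda^{p\vee q - p}} s(\sigma)(p\vee q), \qquad v(q) = \sum_{\sigma' \in v\Lambda^{p\vee q - q}} s(\sigma')(p\vee q).
\]
Since the bijection preserves sources, these two sums agree as multisets of generators, forcing $v(p) = v(q)$ in $T_\Lambda$. Equivalently, ${}^{p-q}v(q) = v(p) = v(q)$, exhibiting $v(q)$ as a $\mathbb{Z}^k$-periodic element of period $p - q \neq 0$; nonzero-ness follows from conicality of $T_\Lambda$ (Theorem \ref{th isomorphism between talented monoid and type monoid}). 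This contradicts the hypothesis that $\mathbb{Z}^k$ acts freely on $T_\Lambda$.

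The principal obstacle I anticipate is the combinatorial identity above. The bare condition $\Lambda^{\min}(\alpha, \beta) \neq \emptyset$ only supplies \emph{some} pairs in $\Lambda^{\min}(\alpha, \beta)$, and $v\Lambda^{p\vee q - p}$ could a priori be strictly larger than the set of first coordinates appearing there. The full strength of the periodic-pair hypothesis---applied at every $\eta = \sigma$ separately---is precisely what forces each $\sigma$ to extend $\alpha$ into an MCE, so that the bijection covers the entire sets $v\Lambda^{p \vee q - p}$ and $v\Lambda^{p\vee q - q}$.
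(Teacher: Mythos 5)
Your proof is correct and follows essentially the same route as the paper's: both argue the contrapositive, apply the periodic-pair condition at every extension $\eta$ to obtain a source-preserving bijection between $v\Lambda^{(p\vee q)-p}$ and $v\Lambda^{(p\vee q)-q}$ (packaged via $\MCE(\alpha,\beta)$ in your version and via $\Lambda^{\min}(\alpha,\beta)$ in the paper's), and then equate the two expansions of $v$ in $T_\Lambda$ to produce a nonzero element of period $p-q$. The paper's periodic element $\sum_i s(\mu_i)$ is just a $\mathbb{Z}^k$-translate of your $v(q)$, so the two arguments are the same in substance.
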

\begin{proof}
We prove the contrapositive statement. Suppose $(\alpha,\beta)$ is a periodic pair in $\Lambda$ and $m:=d(\alpha), n:=d(\beta)$. Then $\Lambda^{\min}(\alpha,\beta)=\Lambda^{\min}(\alpha s(\alpha),\beta s(\beta))\neq \emptyset$. It implies $m\neq n$. Let $v=s(\alpha)=s(\beta)$, $m':=(m\vee n)-m$ and $n':=(m\vee n)-n$. We claim that the three sets $v\Lambda^{m'},v\Lambda^{n'}$ and $\Lambda^{\min}(\alpha,\beta)$ are in bijective correspondences. Let $\mu\in v\Lambda^{m'}$. Since $(\alpha,\beta)$ is a periodic pair, so $\Lambda^{\min}(\alpha\mu,\beta\mu)\neq \emptyset$. Thus there exists $\overline{\mu}\in \Lambda$ such that $d(\overline{\mu})=d(\alpha\mu)\vee d(\beta\mu)$ and $\overline{\mu}(0,m\vee n)=\alpha\mu$ and $\overline{\mu}(0,d(\beta\mu))=\beta\mu$. Now $d(\beta)=n\le m\vee n=d(\alpha\mu)\le d(\overline{\mu})$. So by factorization property, there exist unique $\beta',\nu_{\mu}\in \Lambda$ such that $\alpha\mu=\beta'\nu_{\mu}$, $d(\beta')=n$ and $d(\nu_{\mu})=(m\vee n)-n=n'$. But then $\overline{\mu}(0,n)=\beta'$ which shows that $\beta'=\beta$ and so $\alpha\mu=\beta\nu_{\mu}$. Hence $(\mu,\nu_{\mu})\in \Lambda^{\min}(\alpha,\beta)$. Note that the choice of $\nu_{\mu}$ does not depend on the choice of the minimal common extension $\overline{\mu}$ of $\alpha\mu$ and $\beta\mu$. Thus we have a well-defined map 
\begin{align*}
\phi:v\Lambda^{m'} & \longrightarrow \Lambda^{\min}(\alpha,\beta)\\
\mu & \longmapsto \phi(\mu):=(\mu,\nu_{\mu}).
\end{align*}
Since $\Lambda$ is cancellative, the map $\phi$ is bijective. Using similar construction one can show that the map \[\psi:\Lambda^{\min}(\alpha,\beta)\longrightarrow v\Lambda^{n'};(\gamma,\delta)\longmapsto \delta\] is also bijective. Since $\Lambda$ is row-finite, $|\Lambda^{\min}(\alpha,\beta)|< \infty$. Let $\Lambda^{\min}(\alpha,\beta)=\{(\mu_1,\nu_1),(\mu_2,\nu_2),...,(\mu_r,\nu_r)\}$. Now using the bijection $\phi$ we have,

\begin{center}
$v=\displaystyle{\sum_{\mu\in v\Lambda^{m'}}} s(\mu)(m')=\displaystyle{\sum_{i=1}^{r}} s(\mu_i)(m')=~^{m'}(\displaystyle{\sum_{i=1}^{r}} s(\mu_i))$.    
\end{center}

Similarly using the bijection $\psi$ and defining relations of $T_\Lambda$ we can have $v=~^{n'}(\displaystyle{\sum_{i=1}^{r}} s(\nu_i))$. Now since $s(\mu_i)=s(\nu_i)$, so $^{m'}a= ~^{n'}a$ where $a:=\displaystyle{\sum_{i=1}^{r}} s(\mu_i)\in T_\Lambda$. Thus $^{m'-n'}a=a$ and $m'-n'\in \mathbb{Z}^k\setminus \{0\}$. So we have a nonzero periodic element in $T_\Lambda$. 
\end{proof}
The converse of Proposition \ref{pro sufficient condition of aperiodicity} is not true in general. We may find a row-finite $k$-graph $\Lambda$ which is aperiodic but $T_\Lambda$ contains a periodic element.
\begin{example}\label{ex counter example for aperiodicity}
Let $A_2$ be the arrow graph with two vertices and one edge. Let $\Lambda:=f^*(A_2^*)$, the $2$-graph defined according to Example \ref{ex higher-rank graphs} $(v)$, where $f:\mathbb{N}^2\longrightarrow \mathbb{N}$ is the monoid homomorphism defined by $f((x,y)):=x+y$ for all $(x,y)\in \mathbb{N}^2$. The $1$-skeleton is shown in the following diagram: 

\[
\begin{tikzpicture}[scale=1.5]
\node[inner sep=1.5pt, circle,draw,fill=black,label={$u$}] (A) at (0,0) {};	
\node[inner sep=1.5pt, circle,draw,fill=black,label={$v$}] (B) at (2,0) {};

\path[->, red, dashed, >=latex,thick] (B) edge [bend left] node[below=0.05cm]{$\alpha$} (A);
\path[->,blue, >=latex,thick] (B) edge [bend right] node[above=0.05cm]{$\beta$} (A);
\end{tikzpicture}
\]

Clearly $\Lambda$ is a row-finite $2$-graph. There are no distinct paths with common source $u$. The only pair of distinct paths $\alpha,\beta$ with $s(\alpha)=s(\beta)=v$, $r(\alpha)=r(\beta)$ and $d(\alpha) \wedge d(\beta)=0$ are given as $\alpha=(e,(0,1))$ and $\beta=(e,(1,0))$. If we take $\tau=(v,(0,0))\equiv v$ then it is clear that $\MCE(\alpha\tau,\beta\tau)=\emptyset$. Therefore by \cite[Remark 3.2]{Lewin}, $\Lambda$ is aperiodic.

However note that in $T_\Lambda$,
\begin{center}
    $^{(1,0)}v=v((1,0))=u((0,0))=v((0,1))=~ ^{(0,1)}v$
\end{center}
and so $^{(1,-1)}v=~ ^{(0,0)}v=v$, which shows that $v$ is a nonzero periodic element in $T_\Lambda$.
\end{example}

\begin{rmk}\label{rem about the example}
In \cite[Corollary 4.3 $(i)$]{Hazrat} it was shown that a row-finite graph satisfies Condition $(L)$ if and only if $\mathbb{Z}$ acts freely on the talented monoid $T_E$. This result does not depend on the existence of sink in the graph. In the $k$-graph setting aperiodic paths are evidently the right analogue of cycles with exits (see \cite[Lemma 4.6]{Pino}). From Example \ref{ex counter example for aperiodicity} it may appear that existence of source plays a role for higher-rank graphs. However in the last section we will give another example (see Example \ref{ex simple but not semisimple}) which shows that the converse may not hold even for row-finite $k$-graphs without sources.
\end{rmk}

\begin{dfn}\label{def line point}
(\cite[Definition 3.2]{Brown}) Let $\Lambda$ be any $k$-graph. Then $v\in \Lambda^0$ is called a \textit{line point} if $v\Lambda^\infty=\{x\}$ and $x$ is aperiodic (i.e., if there is exactly one aperiodic infinite path with range $v$). The set of all line points of $\Lambda$ is denoted as $P_l(\Lambda)$.   
\end{dfn}
We obtain the following characterization of line points via $T_\Lambda$.
\begin{thm}\label{th line points characrerization}
Let $\Lambda$ be a row-finite $k$-graph without sources and $v\in \Lambda^0$. Then $v\in P_l(\Lambda)$ (i.e., $v$ is a line point) if and only if $v=v(0)$ is an aperiodic atom in $T_\Lambda$.    
\end{thm}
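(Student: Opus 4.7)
The plan is to prove both implications separately, working throughout with the $\mathbb{Z}^k$-monoid isomorphism $T_\Lambda \cong M_{\overline{\Lambda}}$ of Theorem~\ref{thm:talisrefine}(ii) and leaning on the confluence lemma (Lemma~\ref{lem the confluence lemma}). The structural observation driving both directions is that if $v$ is a line point with unique infinite path $x$, then, because $\Lambda$ has no sources, any second element of $v\Lambda^n$ would extend to a second infinite path from $v$; hence $|v\Lambda^n|=1$ and $v\Lambda^n=\{x(0,n)\}$ for every $n \in \mathbb{N}^k$. The same applies to each vertex $x(p)$, since $\sigma^p(x)$ is the unique (and still aperiodic) infinite path from $x(p)$, so line-pointness is hereditary along $x$.

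For the forward implication, assume $v$ is a line point. To see that $v(0)$ is an atom, suppose $v(0)=a+b$ with $a,b\neq 0$ and transfer the equality to $M_{\overline{\Lambda}}$, where confluence yields $\gamma \in \mathbb{F}_{\overline{\Lambda}}\setminus\{0\}$ with $(v,0)\longrightarrow \gamma$ and $a'+b'\longrightarrow \gamma$. The observation above forces each single step $\longrightarrow_i$ applied to a line-point carrier $(x(p),n)$ to replace it by the single generator $(x(p+e_i),n+e_i)$; an induction on the length of the reduction shows that $\gamma$ is itself a single generator $(x(p),p)$ for some $p\in \mathbb{N}^k$. But then Lemma~\ref{lem the splitting lemma} would express this single generator as a sum of two \emph{nonzero} elements of the free commutative monoid $\mathbb{F}_{\overline{\Lambda}}$, which is impossible. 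To see that $v(0)$ is aperiodic, suppose ${}^n v(0)=v(0)$ for some $0\neq n \in \mathbb{Z}^k$; shifting by a sufficiently large $p\in \mathbb{N}^k$ one may assume $p, p+n\in \mathbb{N}^k$, so that $v(p)=v(p+n)$ in $T_\Lambda$. Applying confluence and the same single-term propagation to both $(v,p)$ and $(v,p+n)$ produces single generators $(x(q),p+q)$ and $(x(q'),p+n+q')$ which must coincide, forcing $x(q)=x(q')$ with $q-q'=n$. Since $x(q)$ is itself a line point, there is only one infinite path from it, so $\sigma^q(x)=\sigma^{q'}(x)$ with $q\neq q'$, contradicting the aperiodicity of $x$.

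For the reverse implication, assume $v(0)$ is an aperiodic atom. If $|v\Lambda^n|\geq 2$ for some $n\in \mathbb{N}^k$, the defining relation $v(0)=\sum_{\alpha\in v\Lambda^n}s(\alpha)(n)$ expresses $v(0)$ as a sum of at least two nonzero summands (each generator $w(m)$ is nonzero because the congruence defining $M_{\overline{\Lambda}}$ is $0$-restricted), contradicting atomicity; together with the no-sources hypothesis this forces $|v\Lambda^n|=1$ for every $n$, and the factorization property assembles the unique finite paths $\lambda_n\in v\Lambda^n$ into a unique $x\in v\Lambda^\infty$. If $x$ were periodic, then $\sigma^{m_1}(x)=\sigma^{m_2}(x)$ for some $m_1\neq m_2 \in \mathbb{N}^k$, so $x(m_1)=x(m_2)=:w$, and the defining relation applied at $v$ in states $m_1$ and $m_2$ gives $v(0)=w(m_1)=w(m_2)$ in $T_\Lambda$; consequently ${}^{m_1-m_2}v(0)={}^{m_1-m_2}w(m_2)=w(m_1)=v(0)$ with $m_1-m_2\neq 0$, contradicting the aperiodicity of $v(0)$. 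The most delicate technical point throughout is the single-term propagation of $\longrightarrow$ starting from a line-point carrier in $\mathbb{F}_{\overline{\Lambda}}$; this is precisely what allows the splitting lemma to yield a sharp contradiction, whereas the remaining steps are straightforward manipulations of the defining relations of $T_\Lambda$.
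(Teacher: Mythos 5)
Your proposal is correct and follows essentially the same route as the paper: both directions pass to $M_{\overline{\Lambda}}$, use the confluence and splitting lemmas together with the observation that $|v\Lambda^m|=1$ forces every reduction of $(v,0)$ to remain a single generator, and the converse uses the defining relations plus the shift identity $v(0)=w(m_1)=w(m_2)$ exactly as in the paper. The only cosmetic difference is that you verify the atom property directly, whereas the paper first proves minimality and then invokes conicality and cancellativity of $T_\Lambda$.
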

\begin{proof}
Suppose $v$ is a line point in $\Lambda^0$. Let $v\Lambda^\infty=\{y\}$, where $y$ is an aperiodic infinite path. Suppose $0\neq a\in T_\Lambda$ and $a\le v$. Then $a+b=v$ for some $b\in T_\Lambda$. Passing this equality to $M_{\overline{\Lambda}}$ via the isomorphism $\psi:T_\Lambda \longrightarrow M_{\overline{\Lambda}}$ we get $\psi(a)+\psi(b)=(v,0)$ in $M_{\overline{\Lambda}}$. Then there exists $\gamma\in \mathbb{F}_{_{\overline{\Lambda}}}\setminus \{0\}$ such that $\psi(a)+\psi(b)\longrightarrow \gamma$ and $(v,0)\longrightarrow \gamma$. Since $|v\Lambda^\infty|=1$ so $|v\Lambda^m|=1$ for all $m\in \mathbb{N}^k$. It follows that $\gamma=\overline{s}(\lambda)$ for some $\lambda\in (v,0)\overline{\Lambda}$. Since $a\neq 0$ so $\psi(a)\neq 0$. Thus $\psi(a)\longrightarrow \gamma$ and $\psi(b)=0$. So $\psi(a)=(v,0)$ in $M_{\overline{\Lambda}}$. Then in $T_\Lambda$ we have $a=v$, which proves that $v$ is a minimal element in $T_\Lambda$. Since $T_\Lambda$ is conical and cancellative so $v$ is an atom in $T_\Lambda$. Next we show that $v$ is an aperiodic element. On the contrary suppose there exist distinct $n,m\in \mathbb{Z}^k$ such that $^n v=~^m v$. Note that we can take $n,m$ both from $\mathbb{N}^k$. Now in $\mathbb{F}_{\overline{\Lambda}}$, $(v,n)\longrightarrow \delta$ and $(v,m)\longrightarrow \delta$ for some $\delta \in \mathbb{F}_{\overline{\Lambda}}\setminus \{0\}$. Since $|v\Lambda^N|=1$ for all $N\in \mathbb{N}^k$, $\delta$ is a vertex in $\overline{\Lambda}$. Hence there are paths $(\lambda,n),(\mu,m)\in \overline{\Lambda}$ such that $\overline{r}(\lambda,n)=(v,n)$, $\overline{r}(\mu,m)=(v,m)$ and $\delta=\overline{s}(\lambda,n)=\overline{s}(\mu,m)$. Hence $s(\lambda)=s(\mu)$, $r(\lambda)=r(\mu)=v$ and $n+d(\lambda)=m+d(\mu)$. Since $v\Lambda^\infty=\{y\}$, so $y(0,d(\lambda))=\lambda$, $y(0,d(\mu))=\mu$ and $y(d(\lambda))=s(\lambda)=s(\mu)=y(d(\mu))$. Therefore $\sigma^{d(\lambda)}(y)=\sigma^{d(\mu)}(y)$ with $d(\lambda)\neq d(\mu)$. So $y$ is periodic, a contradiction. Hence $v$ must be aperiodic in $T_\Lambda$. 

Conversely suppose $v$ is an aperiodic atom in $T_\Lambda$. Using the defining relations of $T_\Lambda$, it is easy to show that $|v\Lambda^m|=1$ for all $m\in \mathbb{N}^k$ and hence $|v\Lambda^\infty|=1$. Suppose $v\Lambda^\infty=\{x\}$. If $\sigma^m(x)=\sigma^n(x)$ for some $m,n\in \mathbb{N}^k$ then $x(m)=x(n)$. Let $u=x(m)$. Then in $T_\Lambda$, 
\begin{center}
$u(m)=\displaystyle{\sum_{\alpha\in v\Lambda^m}} s(\alpha)(m)=v=\displaystyle{\sum_{\beta\in v\Lambda^n}}s(\beta)(n)=u(n)$.
\end{center}
Therefore $^n v=~^m v$ which implies $m=n$ since $v$ is aperiodic. Hence $x$ is an aperiodic infinite path and so $v\in P_l(\Lambda)$. 
\end{proof}

\begin{cor}\label{cor partial answer}
Let $\Lambda$ be a row-finite $k$-graph without sources. If $\Lambda$ is aperiodic then $\mathbb{Z}^k$ acts freely on the atoms of $T_\Lambda$. 
\end{cor}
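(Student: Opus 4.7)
The plan is to reduce the claim to the line-point characterization already established in Theorem \ref{th line points characrerization}. Let $a$ be an atom of $T_\Lambda$. Since $T_\Lambda$ is conical, writing $a$ as a sum of generators and observing that each generator is a nonzero summand forces $a$ to equal a single generator $v(n)$; any expression with two or more generators would decompose $a$ into two nonzero elements, violating atomicity. Because the $\mathbb{Z}^k$-action is by monoid automorphisms, periodicity (or its absence) is preserved under the action, so it suffices to treat the case $n = 0$, i.e., $a = v = v(0)$.

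The next step exploits the no-sources hypothesis together with the defining relation $v = \sum_{\alpha \in v\Lambda^{e_i}} s(\alpha)(e_i)$, valid for each $i \in \{1,\ldots,k\}$. Atomicity of $v$ forces $|v\Lambda^{e_i}| = 1$ for each $i$; iterating (using Lemma \ref{lem two sum equivalent} to pass from $e_i$-increments to arbitrary $m \in \mathbb{N}^k$) yields $|v\Lambda^m| = 1$ for every $m \in \mathbb{N}^k$. Consequently, there is a \emph{unique} infinite path $x$ with $r(x) = v$, i.e., $v\Lambda^\infty = \{x\}$.

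Finally, I invoke the aperiodicity of $\Lambda$ in the Kumjian--Pask formulation of Definition \ref{def KP convention}, which, for row-finite $k$-graphs without sources, is equivalent to the Lewin--Sims formulation stated in the hypothesis: there exists an aperiodic infinite path with range $v$. Since $x$ is the only infinite path at $v$, this forces $x$ to be aperiodic, so $v$ is a line point of $\Lambda$. The forward direction of Theorem \ref{th line points characrerization} (line point $\Rightarrow$ aperiodic atom) then completes the argument, and the $\mathbb{Z}^k$-action transports aperiodicity to every generator $v(n)$ in the orbit.

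I do not anticipate any substantial obstacle: the proof is a direct assembly of the atom-must-be-a-single-generator observation, the uniqueness of infinite paths at an atomic vertex in a source-free row-finite $k$-graph, and the already established Theorem \ref{th line points characrerization}. The only point requiring mild care is that the decomposition forcing $|v\Lambda^{e_i}| = 1$ must be iterated consistently across all coordinate directions, which is precisely the content of Lemma \ref{lem two sum equivalent}.
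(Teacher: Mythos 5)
Your proof is correct and takes essentially the same route the paper intends for this corollary: an atom must be a single generator $v(n)$, atomicity together with the no-sources relations forces $|v\Lambda^m|=1$ for all $m$ so that $v$ carries a unique infinite path, graph aperiodicity then makes that path aperiodic so that $v$ is a line point, and the forward direction of Theorem \ref{th line points characrerization} finishes the argument. The reduction to $n=0$ via the $\mathbb{Z}^k$-automorphisms is also handled correctly.
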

We aim to show that if $\Lambda$ has no source and $T_\Lambda$ is atomic, then the converse of Proposition \ref{pro sufficient condition of aperiodicity} is true. For this we first prove the following interesting fact about $T_\Lambda$.

\begin{lem}\label{lem atomicity implies UFM}
If $T_\Lambda$ is atomic then it is a unique factorization monoid (UFM).
\end{lem}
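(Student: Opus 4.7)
The plan is to exploit three structural facts about $T_\Lambda$ already established: it is conical (immediate from the construction of $T_\Lambda$ as a quotient of a free commutative monoid by a $0$-restricted congruence), cancellative (Proposition~\ref{pro realizing as a direct limit}), and a refinement monoid (Theorem~\ref{th isomorphism between talented monoid and type monoid}). Atomicity of $T_\Lambda$ being our hypothesis, the existence of a factorization into atoms for each nonunit is automatic; only the uniqueness clause of Definition~\ref{def concepts on monoids w.r.t the preorder}(vi) needs attention.

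First I would upgrade the binary refinement axiom to a \emph{rectangular refinement}: whenever $\sum_{i=1}^{m}x_i=\sum_{j=1}^{n}y_j$ in $T_\Lambda$, there is a family $\{z_{ij}\}_{1\le i\le m,\,1\le j\le n}$ in $T_\Lambda$ with $x_i=\sum_{j}z_{ij}$ and $y_j=\sum_{i}z_{ij}$. This is a classical consequence of binary refinement obtained by an induction on $m+n$, peeling off one summand from each side at every step.

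Next, suppose $a_1+\cdots+a_m=a_1'+\cdots+a_n'$ with each $a_i$ and each $a_j'$ an atom, and apply rectangular refinement to produce the matrix $(z_{ij})$. The decisive observation is the following strengthening of the atom property in a conical monoid: if an atom $a$ is written as a finite sum $a=c_1+\cdots+c_t$, then exactly one summand is nonzero and equals $a$. This is proved by a short induction on $t$ using the binary definition of an atom together with conicality (once one summand equals $a$, the remaining partial sum is $0$ and hence, by conicality, each individual remaining summand is $0$). Applying this row-wise to $(z_{ij})$ yields a map $\sigma\colon\{1,\dots,m\}\to\{1,\dots,n\}$ with $z_{i,\sigma(i)}=a_i$ and all other entries of row $i$ equal to $0$; applying it column-wise yields $\tau\colon\{1,\dots,n\}\to\{1,\dots,m\}$ with $z_{\tau(j),j}=a_j'$ and zeros elsewhere in column $j$.

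The common description $\{(i,\sigma(i))\}=\{(\tau(j),j)\}$ of the support of $(z_{ij})$ then forces $\sigma$ and $\tau$ to be mutually inverse bijections. Therefore $m=n$, and $a_i=z_{i,\sigma(i)}=a_{\sigma(i)}'$ for every $i$, so trivially $a_i\le a_{\sigma(i)}'$ and $a_{\sigma(i)}'\le a_i$. Taking $\pi=\sigma$ delivers the permutation demanded by Definition~\ref{def concepts on monoids w.r.t the preorder}(vi), and we conclude that $T_\Lambda$ is a UFM. The only step with any real content is the rectangular refinement, which is a standard extension of binary refinement, so I do not anticipate any genuine obstacle.
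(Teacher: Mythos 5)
Your proof is correct, but it takes a genuinely different route from the paper's. The paper passes to $M_{\overline{\Lambda}}$ via Theorem~\ref{thm:talisrefine}(ii) and invokes the Confluence Lemma~\ref{lem the confluence lemma} together with the Splitting Lemma~\ref{lem the splitting lemma}: the two atomic decompositions of $a$ are pushed forward along $\longrightarrow$ to a common element of the free commutative monoid $\mathbb{F}_{\overline{\Lambda}}$, each atom necessarily lands on a single vertex (otherwise it would split as a sum of two nonzero elements), and uniqueness is then read off from the free monoid, where it is trivial. You instead argue abstractly from the fact that $T_\Lambda$ is a conical refinement monoid (Theorem~\ref{th isomorphism between talented monoid and type monoid}): rectangular refinement of the two atomic decompositions, combined with the observation that in a conical monoid an atom admits exactly one nonzero summand in any finite decomposition, yields the matching permutation directly, and your support-counting step correctly forces $m=n$ and mutually inverse bijections. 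Your argument in fact establishes the more general statement that every atomic conical refinement monoid is a UFM; what it buys is generality and independence from the graph combinatorics, and what it costs is reliance on Theorem~\ref{th isomorphism between talented monoid and type monoid}, whose proof is itself substantial, whereas the paper's argument needs only the elementary confluence machinery of Section~\ref{sec the talented monoid}. One cosmetic remark: you list cancellativity among the structural facts you exploit, but your argument never uses it --- since your refinement matrix produces genuine equalities $a_i=a'_{\sigma(i)}$, the two inequalities demanded by Definition~\ref{def concepts on monoids w.r.t the preorder}(vi) come for free.
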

\begin{proof}
Let $a\in T_\Lambda$ and 
\begin{center}
    $a=\displaystyle{\sum_{i=1}^{N}}u_i(n_i)=\displaystyle{\sum_{j=1}^{M}} v_j(m_j)$
\end{center}
be any two decomposition of $a$ as sum of atoms. Then in $M_{\overline{\Lambda}}$, $\displaystyle{\sum_{i=1}^{N}}(u_i,n_i)=\displaystyle{\sum_{j=1}^{M}} (v_j, m_j)$. By applying Lemma \ref{lem the confluence lemma} and then Lemma \ref{lem the splitting lemma}, we have $\alpha_1,\alpha_2,...,\alpha_N$, $\beta_1,\beta_2,...,\beta_M\in \mathbb{F}_{_{\overline{\Lambda}}}\setminus \{0\}$ such that $\alpha_1+\alpha_2+...+\alpha_n=\beta_1+\beta_2+...+\beta_M$, $(u_i,n_i)\longrightarrow \alpha_i$ for all $i=1,2,...,N$ and $(v_j,m_j)\longrightarrow \beta_j$ for all $j=1,2,...,M$. Since $\mathbb{F}_{_{\overline{\Lambda}}}$ is the free commutative monoid generated by $\overline{\Lambda}^0$ and $u_i,v_j$ are atoms, so $\alpha_i,\beta_j$'s are vertices of $\overline{\Lambda}$, $N=M$ and $\alpha_i=\beta_{\pi(i)}$ for some permutation $\pi\in S_N$. By Lemma \ref{lem the confluence lemma} we have $(u_i,n_i)=(v_{\pi(i)},m_{\pi(i)})$ in $M_{\overline{\Lambda}}$ and consequently $u_i(n_i)=v_{\pi(i)}(m_{\pi(i)})$ in $T_\Lambda$ for all $i=1,2,...,N$. Hence each element of $T_\Lambda$ has a unique factorization into atoms and so $T_\Lambda$ is a UFM.
\end{proof}
\begin{thm}\label{th Equivalence for aperiodicity in UF}
Let $\Lambda$ be a row-finite $k$-graph without sources such that $T_\Lambda$ is atomic (and hence UFM). Then the following assertions are equivalent.

$(i)$ $\Lambda$ is aperiodic.

$(ii)$ Each atom of $T_\Lambda$ is aperiodic.

$(iii)$ $\mathbb{Z}^k$ acts freely on $T_\Lambda$.
\end{thm}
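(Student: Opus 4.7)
The plan is to prove the equivalence via the cyclic chain $(iii) \Longrightarrow (i) \Longrightarrow (ii) \Longrightarrow (iii)$, where the first two implications are already in hand and only the last requires new argumentation. Neither $(iii) \Longrightarrow (i)$ nor $(i) \Longrightarrow (ii)$ actually uses the atomicity hypothesis: the former is precisely Proposition \ref{pro sufficient condition of aperiodicity}, and the latter is Corollary \ref{cor partial answer}, which asserts that if $\Lambda$ is aperiodic then $\mathbb{Z}^k$ acts freely on the atoms of $T_\Lambda$, which is exactly $(ii)$.

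For the substantive implication $(ii) \Longrightarrow (iii)$, I would argue by contrapositive. Suppose some $a \in T_\Lambda \setminus \{0\}$ is periodic with $^n a = a$ for some $0 \neq n \in \mathbb{Z}^k$. By atomicity, decompose $a = a_1 + a_2 + \cdots + a_m$ as a sum of atoms. Applying the state shift gives
\[
a = {}^n a = {}^n a_1 + {}^n a_2 + \cdots + {}^n a_m.
\]
Since the $\mathbb{Z}^k$-action is by monoid automorphisms, and since in the cancellative conical monoid $T_\Lambda$ (cancellativity coming from Proposition \ref{pro realizing as a direct limit}(ii)) atoms coincide with nonzero minimal elements by Remark \ref{rem consequences in conical cancellative monoid}, each $^n a_i$ is again an atom. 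Now invoke Lemma \ref{lem atomicity implies UFM}: $T_\Lambda$ is a UFM, so comparing the two atom decompositions of $a$ yields a permutation $\pi \in S_m$ with $^n a_i = a_{\pi(i)}$ for each $i$ (equality, rather than merely mutual inequality, because antisymmetry follows from cancellativity). Letting $k$ be the order of $\pi$ in $S_m$, we obtain $^{kn} a_i = a_i$ for every $i$, and since $\mathbb{Z}^k$ is torsion-free we have $kn \neq 0$. Hence each $a_i$ is a nonzero periodic atom, contradicting $(ii)$.

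The only delicate point I anticipate is verifying that the $\mathbb{Z}^k$-automorphisms preserve atomhood, but this is essentially immediate once atoms are identified with nonzero minimal elements, since any order-preserving bijection of a poset preserves minimality. The genuine conceptual lever is Lemma \ref{lem atomicity implies UFM}, which upgrades atomic decomposition to a bona fide unique factorization and thereby lets us transport the periodicity of a general element to the periodicity of at least one of its atomic constituents. Everything else in the argument is bookkeeping about permutations and the torsion-freeness of $\mathbb{Z}^k$.
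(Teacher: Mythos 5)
Your proposal is correct and follows essentially the same route as the paper: both handle $(iii)\Rightarrow(i)$ and $(i)\Rightarrow(ii)$ by citing Proposition \ref{pro sufficient condition of aperiodicity} and Corollary \ref{cor partial answer}, and both prove $(ii)\Rightarrow(iii)$ by decomposing a periodic element into atoms, invoking Lemma \ref{lem atomicity implies UFM} to extract a permutation matching the two atomic decompositions, and raising that permutation to its order to conclude each atom is periodic. The only cosmetic difference is that the paper first renormalizes all atoms to a common state $t$ before applying unique factorization, a step your argument correctly shows to be unnecessary (though you may want to rename the order of the permutation, since $k$ already denotes the rank of the graph).
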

\begin{proof}
The implication $(i)\Longrightarrow (ii)$ follows from Corollary \ref{cor partial answer} and $(iii)\Longrightarrow (i)$ by Proposition \ref{pro sufficient condition of aperiodicity}. We only prove that $(ii)\Longrightarrow (iii)$. Let us assume on the contrary that $a\in T_\Lambda$ is a nonzero periodic element. Write $a=\displaystyle{\sum_{i=1}^{N}} v_i(t_i)$ where $N\in \mathbb{N}$ and each $v_i(t_i)$ is an atom in $T_\Lambda$. Letting $t=\displaystyle{\bigvee_{i=1}^{N}} t_i$, we can express $a$ as $\displaystyle{\sum_{i=1}^{N}} w_i(t)$ where for each $i=1,2,..,N$, $w_i=s(v_i\Lambda^{t-t_i})$. Since $a$ is periodic, there exists $n\in \mathbb{Z}^k\setminus \{0\}$ such that $^n a=a$. Then $^n (\displaystyle{\sum_{i=1}^{N}} w_i)=~^n(^{-t}a)=~^{-t}a=\displaystyle{\sum_{i=1}^{N}} w_i$. Clearly each $w_i$ is an atom in $T_\Lambda$. Since $T_\Lambda$ is a UFM, so there exists $\pi\in S_{N}$ such that $w_i(n)=w_{\pi(i)}$ for all $i=1,2,...,N$. There must exists a positive integer $l$ such that $\pi^l=id$, the identity permutation. Now a simple calculation yields that $w_i(ln)=w_i$. It implies that each $w_i$ is periodic which leads to a contradiction. Therefore $\mathbb{Z}^k$ acts freely on $T_\Lambda$.
\end{proof}
Using the above theorem and Proposition \ref{pro sufficient condition for cofinality} one can obtain the following:
\begin{cor}\label{cor simplicity of KPA}
Let $\Lambda$ be a row-finite $k$-graph without sources and $\mathsf{k}$ any field. Suppose $T_\Lambda$ is atomic. Then the Kumjian-Pask algebra $\KP_\mathsf{k}(\Lambda)$ is simple if and only if $T_\Lambda$ is $\mathbb{Z}^k$-simple and $\mathbb{Z}^k$ acts freely on $T_\Lambda$.
\end{cor}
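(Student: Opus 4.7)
The plan is to combine the classical simplicity criterion for Kumjian-Pask algebras with two equivalences already established in the paper. Recall that over a field $\mathsf{k}$, $\KP_\mathsf{k}(\Lambda)$ is simple if and only if $\Lambda$ is simultaneously cofinal and aperiodic (this is the Kumjian-Pask analogue of the Cuntz-Krieger simplicity theorem; see for instance \cite[Theorem 5.14]{Pino}). So the proof reduces to translating each of the two hypotheses ``cofinal'' and ``aperiodic'' into monoid-theoretic statements about $T_\Lambda$.

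For the cofinality half, I would directly invoke Proposition \ref{pro sufficient condition for cofinality}, which asserts with no extra hypothesis that $\Lambda$ is cofinal if and only if $T_\Lambda$ is a simple $\mathbb{Z}^k$-monoid. For the aperiodicity half, the atomicity assumption on $T_\Lambda$ is exactly what is needed in order to apply Theorem \ref{th Equivalence for aperiodicity in UF}, whose equivalence $(i)\Longleftrightarrow(iii)$ states that $\Lambda$ is aperiodic if and only if $\mathbb{Z}^k$ acts freely on $T_\Lambda$. Concatenating these two biconditionals with the simplicity criterion yields precisely the statement of the corollary.

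There is no real obstacle here: the atomicity hypothesis is used in exactly one spot, namely to pass between aperiodicity of $\Lambda$ and freeness of the $\mathbb{Z}^k$-action, and the cofinality equivalence requires nothing beyond the standing assumption that $\Lambda$ be row-finite without sources. The only point worth double-checking when writing out the proof is that the version of the simplicity criterion cited applies verbatim in our setting (row-finite, no sources, coefficients in a field), which it does.
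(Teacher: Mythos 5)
Your proposal is correct and follows exactly the paper's own route: the paper derives the corollary by combining Theorem \ref{th Equivalence for aperiodicity in UF} (where atomicity is used to get the equivalence of aperiodicity with freeness of the $\mathbb{Z}^k$-action) with Proposition \ref{pro sufficient condition for cofinality}, together with the standard simplicity criterion for Kumjian-Pask algebras from \cite{Pino}. Nothing is missing.
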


The following theorem characterizing strongly aperiodic $k$-graphs can be seen as a higher-rank generalization of \cite[Corollary 4.3 $(ii)$]{Hazrat}.

\begin{thm}\label{th characterizing strong aqperiodicity}
Let $\Lambda$ be a row-finite $k$-graph with no sources. If $\mathbb{Z}^k$ acts freely on every quotient of $T_\Lambda$ by a $\mathbb{Z}^k$-order ideal, then $\Lambda$ is strongly aperiodic. The converse holds if $T_\Lambda$ is atomic.   
\end{thm}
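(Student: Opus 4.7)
The plan is to route both implications through the isomorphism $T_\Lambda/\equiv_H\cong T_{\Lambda/H}$ of Lemma \ref{lem quotient k-graph isomorphism} combined with the lattice correspondence $\mathcal{L}(T_\Lambda)\leftrightarrow \mathcal{H}(\Lambda)$ of Proposition \ref{pro the lattice isomorphism}, so that free $\mathbb{Z}^k$-action on a quotient $T_\Lambda/\equiv_J$ translates directly into a statement about aperiodicity of the quotient $k$-graph $\Lambda/H_J$.

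For the forward direction, fix a proper $H\in\mathcal{H}(\Lambda)$. Then $\langle H\rangle$ is a $\mathbb{Z}^k$-order ideal whose induced congruence coincides with $\equiv_H$, so by hypothesis $\mathbb{Z}^k$ acts freely on $T_{\Lambda/H}\cong T_\Lambda/\equiv_H$. Applying Proposition \ref{pro sufficient condition of aperiodicity} to the row-finite source-free $k$-graph $\Lambda/H$ delivers aperiodicity of $\Lambda/H$; since $H$ was arbitrary, $\Lambda$ is strongly aperiodic.

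For the converse, start with an arbitrary $\mathbb{Z}^k$-order ideal $J$ and use Proposition \ref{pro the lattice isomorphism} to write $J=\langle H\rangle$ with $H=H_J\in\mathcal{H}(\Lambda)$. If $H=\Lambda^0$ the quotient is trivial; otherwise $H$ is proper and strong aperiodicity yields aperiodicity of $\Lambda/H$. The strategy is to invoke Theorem \ref{th Equivalence for aperiodicity in UF} on $\Lambda/H$ to conclude that $\mathbb{Z}^k$ acts freely on $T_{\Lambda/H}\cong T_\Lambda/\equiv_J$. The hypothesis of that theorem that requires verification is atomicity of $T_{\Lambda/H}$, and this is the main obstacle.

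To dispose of this obstacle, I would first note that the $H=\emptyset$ case of strong aperiodicity makes $\Lambda$ itself aperiodic, so Theorem \ref{th Equivalence for aperiodicity in UF} forces every atom of $T_\Lambda$ to be aperiodic, and Theorem \ref{th line points characrerization} identifies such atoms (up to the $\mathbb{Z}^k$-action) as line points $w\in P_l(\Lambda)$. The crucial intermediate lemma is that a line point $w\in \Lambda^0\setminus H$ remains a line point of $\Lambda/H$: since $|w\Lambda^m|=1$ for every $m\in\mathbb{N}^k$, if the unique source $x(m)\in s(w\Lambda^m)$ lay in $H$ then saturation of $H$ would force $w\in H$; hence the unique infinite path through $w$ stays in $\Lambda^0\setminus H$ and its aperiodicity is intrinsic. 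Applying the quotient homomorphism $\rho\colon T_\Lambda\to T_{\Lambda/H}$ to an atomic decomposition of any generator $v\in\Lambda^0\setminus H$ in $T_\Lambda$ then expresses $v$ in $T_{\Lambda/H}$ as a sum of surviving line-point atoms (with the other summands mapping to $0$), and the $\mathbb{Z}^k$-action handles the generators $v(n)$ for $n\neq 0$. Thus $T_{\Lambda/H}$ is atomic, Theorem \ref{th Equivalence for aperiodicity in UF} applies, and the converse follows.
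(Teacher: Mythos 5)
Your proposal is correct and follows essentially the same route as the paper: the forward direction is identical (Lemma \ref{lem quotient k-graph isomorphism} plus Proposition \ref{pro sufficient condition of aperiodicity}), and the converse likewise reduces, via Proposition \ref{pro the lattice isomorphism} and Lemma \ref{lem quotient k-graph isomorphism}, to showing that $T_{\Lambda/H}$ is atomic so that Theorem \ref{th Equivalence for aperiodicity in UF} applies. The only cosmetic difference is that you establish atomicity of the quotient by detouring through aperiodicity of $\Lambda$ and the line-point characterization, whereas the paper observes more directly that an atom $v(n)$ of $T_\Lambda$ forces $|v\Lambda^m|=1$ for all $m$, and since $\emptyset\neq v(\Lambda/H)^m\subseteq v\Lambda^m$ the vertex remains a leaf, hence an atom, in $T_{\Lambda/H}$ --- no aperiodicity is needed for that step.
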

\begin{proof}
Let $H$ be any proper hereditary saturated subset of $\Lambda^0$. Then $T_{\Lambda/H}\cong T_\Lambda/\sim_H=T_\Lambda/\langle H\rangle$ by Lemma \ref{lem quotient k-graph isomorphism}. Therefore $\mathbb{Z}^k$ acts freely on $T_{\Lambda/H}$ and consequently $\Lambda/H$ is aperiodic by Proposition \ref{pro sufficient condition of aperiodicity}. So $\Lambda$ is strongly aperiodic.

Now assume that $T_\Lambda$ is atomic and $\Lambda$ is strongly aperiodic. Let $I$ be any $\mathbb{Z}^k$-order ideal of $T_\Lambda$. Since the action of $\mathbb{Z}^k$ on the zero monoid is vacuously free, we can take $I$ to be a proper ideal. By Proposition \ref{pro the lattice isomorphism}, $I=\langle H\rangle$ for some proper hereditary saturated subset $H$. Now $T_\Lambda/I=T_\Lambda/\sim_H\cong T_{\Lambda/H}$ by Lemma \ref{lem quotient k-graph isomorphism} and $\Lambda/H$ is aperiodic due to strong aperiodicity. If we can show that $T_{\Lambda/H}$ is atomic then we are done by Theorem \ref{th Equivalence for aperiodicity in UF}. So assume that $0\neq y\in T_{\Lambda/H}$. Since $\rho:T_\Lambda\longrightarrow T_{\Lambda/H}$ is surjective so $y=\rho(x)$ for some $x\in T_\Lambda\setminus \{0\}$. Now since $T_\Lambda$ is atomic, we can write $x=\displaystyle{\sum_{i=1}^{l}} v_i(n_i)$ where each $v_i(n_i)$ is an atom in $T_\Lambda$. We can partition $\{1,2,...,l\}=A\sqcup B$ such that $v_k\notin H$ for all $k\in A$ and $v_k\in H$ for all $k\in B$. Then $y=\displaystyle{\sum_{k\in A}} v_k(n_k)$. Since $y\neq 0$, $A\neq \emptyset$. For each $k\in A$ and $m\in \mathbb{N}^k$, $v_k\in (\Lambda/H)^0$ and $\emptyset \neq v_k(\Lambda/H)^m
\subseteq v_k\Lambda^m$. Since $|v_k\Lambda^m|=1$, it follows that $v_k(n_k)$ is an atom in $T_{\Lambda/H}$ for all $k\in A$. Thus $T_{\Lambda/H}$ is atomic.
\end{proof}

We now identify the socle of the Kumjian-Pask algebra via the talented monoid $T_\Lambda$ as well as obtain a necessary and sufficient criterion for the socle to be an essential ideal. 

\begin{prop}\label{pro concering socle and semisimplicity}
Let $\Lambda$ be a row-finite $k$-graph without sources and $\mathsf{k}$ any field. Let $\mathcal{A}$ be the $\mathbb{Z}^k$-order ideal of $T_\Lambda$ generated by all the aperiodic atoms in $T_\Lambda$. Then the following hold.

$(i)$ For $v\in \Lambda^0$, $\KP_\mathsf{k}(\Lambda)p_v$ is a minimal left ideal of $\KP_\mathsf{k}(\Lambda)$ if and only if $v$ is an aperiodic atom in $T_\Lambda$.

$(ii)$ $\SC(\KP_\mathsf{k}(\Lambda))=I(H_{_\mathcal{A}})$ where $I(H_{_\mathcal{A}})$ is the ideal of $\KP_\mathsf{k}(\Lambda)$ generated by the hereditary saturated subset $H_{_\mathcal{A}}$.

$(iii)$ $\SC(\KP_\mathsf{k}(\Lambda))$ is an essential ideal of $\KP_\mathsf{k}(\Lambda)$ if and only if for each $v\in \Lambda^0$ there exists $w\in \Lambda^0$ and $m\in \mathbb{N}^k$ such that $v\ge w(m)$ and $w(m)$ is an aperiodic atom in $T_\Lambda$.

\end{prop}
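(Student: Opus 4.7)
\medskip
\noindent\textbf{Proof plan.}

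For part (i), the strategy is simply to concatenate two equivalences. It is proved in \cite{Brown} (the line-point description of minimal one-sided ideals of the Kumjian-Pask algebra) that $\KP_\mathsf{k}(\Lambda)p_v$ is a minimal left ideal if and only if $v\in P_l(\Lambda)$, and Theorem \ref{th line points characrerization} already tells us that $v\in P_l(\Lambda)$ is equivalent to $v=v(0)$ being an aperiodic atom of $T_\Lambda$. Linking these gives (i) at once.

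For part (ii), the socle is, by definition, the sum of all minimal left ideals of $\KP_\mathsf{k}(\Lambda)$. Invoking (i) together with the fact that, in the Kumjian-Pask setting, every minimal left ideal is a translate of one of the form $\KP_\mathsf{k}(\Lambda)p_v$ (via the structural results in \cite{Brown}), one gets $\SC(\KP_\mathsf{k}(\Lambda))=I(\overline{P_l(\Lambda)})$. It therefore suffices to show that $H_\mathcal{A}=\overline{P_l(\Lambda)}$. Here I would first observe that, since $T_\Lambda$ is conical and cancellative (Theorem \ref{thm:talisrefine}, Proposition \ref{pro realizing as a direct limit}), any atom of $T_\Lambda$ must be of the form $v(n)$ for a single vertex $v$ at a single state $n$; by Theorem \ref{th line points characrerization} and the fact that $\mathbb{Z}^k$ acts by monoid automorphisms, such an atom is aperiodic precisely when $v$ is a line point. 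Consequently $\mathcal{A}=\bigl\langle\{v(0):v\in P_l(\Lambda)\}\bigr\rangle$, and applying the lattice isomorphism of Proposition \ref{pro the lattice isomorphism} this ideal corresponds to $\overline{P_l(\Lambda)}\in\mathcal{H}(\Lambda)$. Thus $H_\mathcal{A}=\overline{P_l(\Lambda)}$ and $\SC(\KP_\mathsf{k}(\Lambda))=I(H_\mathcal{A})$.

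For part (iii), the first task is to decode the monoid-theoretic condition: using the defining relations of $T_\Lambda$ together with the confluence Lemma \ref{lem the confluence lemma} in $M_{\overline{\Lambda}}$, one checks that $v\geq w(m)$ with $m\in\mathbb{N}^k$ holds in $T_\Lambda$ if and only if there is a path $\lambda\in v\Lambda^m$ with $s(\lambda)=w$; coupling this with Theorem \ref{th line points characrerization} rephrases the asserted condition as ``every vertex of $\Lambda$ reaches a line point.'' The forward direction is handled by fixing $v\in\Lambda^0$ and intersecting $\SC$ with the (graded basic) ideal $I(\overline{v})$ generated by $p_v$; essentiality gives $\SC\cap I(\overline{v})=I(H_\mathcal{A}\cap\overline{v})\neq 0$ via the lattice isomorphism, so $H_\mathcal{A}\cap\overline{v}\neq\emptyset$, and then by iterating Lemma \ref{lem the construction of HS subset containg v} one descends along paths emanating from $v$ until one lands inside $P_l(\Lambda)$. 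For the converse, I would take an arbitrary nonzero ideal $I\triangleleft\KP_\mathsf{k}(\Lambda)$ and reduce to the case that $I$ contains some $p_v$ (using standard tools available in the Kumjian-Pask setting to produce a vertex idempotent inside $I$); then the hypothesis provides a path $\lambda$ from $v$ to a line point $w$, and the element $p_w=s_\lambda^*p_vs_\lambda\in I\cap\SC$ is nonzero by (i), proving essentiality.

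\medskip
\noindent\textbf{Main obstacle.} The forward direction of (iii) requires more than the lattice isomorphism, since one must also track the \emph{path-theoretic} content behind $H_\mathcal{A}\cap\overline{v}\neq\emptyset$: an element of this intersection need not be a line point itself (it may enter $H_\mathcal{A}$ only after applying saturation), so one has to walk along paths from $v$ through the finite inductive stages $X_t$ of Lemma \ref{lem the construction of HS subset containg v} and use that $H_\mathcal{A}=\overline{P_l(\Lambda)}$ in order to actually produce a path from $v$ landing at a genuine line point. The converse, in turn, needs the fact that essentiality of $\SC$ against \emph{all} two-sided ideals (not merely graded basic ones) can be tested on principal ideals of the form $\KP_\mathsf{k}(\Lambda)p_v\KP_\mathsf{k}(\Lambda)$; justifying this reduction inside the Kumjian-Pask algebra is the most delicate point of the proof.
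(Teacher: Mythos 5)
Your treatment of (i) and (ii) is essentially the paper's: the authors simply combine Theorem \ref{th line points characrerization} with \cite[Theorems 3.9, 3.10]{Brown}, and your identification $H_{_\mathcal{A}}=\overline{P_l(\Lambda)}$ is the right bookkeeping behind (ii). For (iii), though, the paper does \emph{not} re-prove the ring-theoretic essentiality criterion: it quotes \cite[Theorem 3.10(2)]{Brown} (``$\SC(\KP_\mathsf{k}(\Lambda))$ is essential iff every vertex connects to a line point'') for \emph{both} implications, and the entire content of its proof is the translation between ``$v$ connects to a line point'' and ``$v\ge w(m)$ for some aperiodic atom $w(m)$'' --- one direction via the relation $v=\sum_{\alpha\in v\Lambda^{d(\lambda)}}s(\alpha)(d(\lambda))$, the other by pushing $v=w(m)+x$ into $M_{\overline{\Lambda}}$ and applying Lemmas \ref{lem the confluence lemma} and \ref{lem the splitting lemma} together with heredity of $P_l(\overline{\Lambda})$.

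The genuine gap is in your converse of (iii). The reduction ``every nonzero ideal $I$ contains some $p_v$'' is false in general: extracting a vertex idempotent from an arbitrary (non-graded, non-basic) ideal is exactly the Cuntz--Krieger uniqueness phenomenon and requires aperiodicity; in a periodic $k$-graph (already for the single loop, where $\KP_\mathsf{k}(\Lambda)\cong\mathsf{k}[x,x^{-1}]$) there are nonzero ideals containing no vertex idempotent. This reduction is precisely the hard content of \cite[Theorem 3.10(2)]{Brown}, so your argument has a hole unless you invoke that theorem --- which is what the paper does. Two further remarks. First, the equivalence you assert, that $v\ge w(m)$ in $T_\Lambda$ iff some $\lambda\in v\Lambda^m$ has $s(\lambda)=w$, fails in the ``only if'' direction: confluence only rewrites $(w,m)$ to some $(u,n)$ further along the unique infinite path out of $w$, so $v\ge w(m)$ yields a path from $v$ to \emph{some} line point, not necessarily to $w$ in degree $m$; that weaker conclusion is all the proposition needs, but your stated iff is false. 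Second, your forward direction via $\SC\cap I(\overline{v})\ne 0$ and descent through the stages $X_t$ of Lemma \ref{lem the construction of HS subset containg v} is a workable alternative to citing Brown there (heredity of $H_{_\mathcal{A}}$ forces $H_{_\mathcal{A}}\cap X_{i+1}\neq\emptyset\Rightarrow H_{_\mathcal{A}}\cap X_i\neq\emptyset$, hence $s(v\Lambda)\cap H_{_\mathcal{A}}\neq\emptyset$), provided you also perform the analogous descent inside $\overline{P_l(\Lambda)}$ to land on an actual line point.
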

\begin{proof}
Using Theorem \ref{th line points characrerization} together with \cite[Theorems 3.9, 3.10]{Brown}, one can prove $(i)$ and $(ii)$. Now let us prove $(iii)$. Assume that $\SC(\KP_\mathsf{k}(\Lambda))$ is an essential ideal and take $v\in \Lambda^0$. By \cite[Theorem 3.10 (2)]{Brown}, there exists a line point $w\in \Lambda^0$ such that $v\Lambda w\neq \emptyset$. Choose $\lambda\in v\Lambda w$. Then $w(d(\lambda))$ is an aperiodic atom in $T_\Lambda$ by Theorem \ref{th line points characrerization} and we have 
\begin{center}
    $v=v(0)=\displaystyle{\sum_{\alpha\in v\Lambda^{d(\lambda)}}} s(\alpha)(d(\lambda))=w(d(\lambda))+\displaystyle{\sum_{\alpha\in v\Lambda^{d(\lambda)}\setminus \{\lambda\}}} s(\alpha)(d(\lambda))\ge w(d(\lambda))$.
\end{center}
For the converse, in view of \cite[Theorem 3.10 (2)]{Brown}, we just need to show that every vertex in $\Lambda$ connects to a line point. Let $v\in \Lambda^0$. By our hypothesis $v=w(m)+x$ for some aperiodic atom $w(m)\in T_\Lambda$. By using the natural isomorphism $\phi:T_\Lambda\longrightarrow M_{\overline{\Lambda}}$ of Theorem \ref{thm:talisrefine} $(ii)$ we bypass this equality to $M_{\overline{\Lambda}}$ and then using Lemma \ref{lem the confluence lemma}, we have $c\in \mathbb{F}_{\overline{\Lambda}}\setminus \{0\}$ such that $(v,0)\longrightarrow c$, $(w,m)+\phi(x)\longrightarrow c$. Now by Lemma \ref{lem the splitting lemma} there are $c_1,c_2\in \mathbb{F}_{\overline{\Lambda}}\setminus \{0\}$ such that $(w,m)\longrightarrow c_1$, $\phi(x)\longrightarrow c_2$ and $c=c_1+c_2$. Note that $(w,m)\in P_l(\overline{\Lambda})$. This implies $c_1\in P_l(\overline{\Lambda})$ since $P_l(\overline{\Lambda})$ is hereditary. Let $c_1=(u,n)$. Then $u\in P_l(\Lambda)$. As $(v,0)\longrightarrow c=(u,n)+c_2$, it follows that there exists $\lambda\in \Lambda$ such that $r(\lambda)=v$, $s(\lambda)=u$ and $d(\lambda)=n$. Therefore $v$ connects to a line point.
\end{proof}

We say that $v\in \Lambda^0$ is a \textit{leaf} if $|v\Lambda^m|=1$ for all $m\in \mathbb{N}^k$ or equivalently if $|v\Lambda^\infty|=1$. The set of all leaves is denoted by $\mathcal{B}(\Lambda)$. It is easy to see that $\mathcal{B}(\Lambda)$ is a hereditary subset of $\Lambda^0$ and $P_l(\Lambda)\subseteq \mathcal{B}(\Lambda)$.

In the following theorem we characterize semisimple Kumjian-Pask algebra based on the properties of the talented monoid. This also add on some more equivalent conditions to the list provided in \cite[Theorem 4.3]{Brown}. 

\begin{thm}\label{th characterizing semisimplicity}
Let $\Lambda$ be a row-finite $k$-graph without sources and $\mathsf{k}$ any field. Then the following conditions are equivalent.

$(i)$ $\KP_\mathsf{k}(\Lambda)$ is semisimple.

$(ii)$ $T_\Lambda$ is atomic and $\mathbb{Z}^k$ acts freely on $T_\Lambda$.

$(iii)$ $T_\Lambda$ is atomic and each atom of $T_\Lambda$ is aperiodic.

$(iv)$ $\Lambda^0=\overline{\mathcal{B}(\Lambda)}$ and $\Lambda$ is aperiodic as a $k$-graph.
\end{thm}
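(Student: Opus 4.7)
The plan is to prove the cycle $(ii) \Longleftrightarrow (iii) \Longrightarrow (iv) \Longrightarrow (i) \Longrightarrow (iii)$, which routes each step through results already established. The equivalence of $(ii)$ and $(iii)$ is immediate from Theorem \ref{th Equivalence for aperiodicity in UF}: under the hypothesis that $T_\Lambda$ is atomic, freeness of the $\mathbb{Z}^k$-action and aperiodicity of every atom are both equivalent to aperiodicity of $\Lambda$, hence to each other.

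For $(iii) \Longrightarrow (iv)$, Theorem \ref{th Equivalence for aperiodicity in UF} yields that $\Lambda$ is aperiodic. For the density assertion I would fix $v \in \Lambda^0$ and decompose $v = \sum_{i=1}^N v_i(n_i)$ as a sum of aperiodic atoms using atomicity. By Theorem \ref{th line points characrerization} each $v_i$ is a line point, and the argument used in the converse direction of Proposition \ref{pro concering socle and semisimplicity}(iii) (passing to $M_{\overline{\Lambda}}$, applying the confluence Lemma \ref{lem the confluence lemma} and the splitting Lemma \ref{lem the splitting lemma}, and invoking hereditariness of $P_l(\overline{\Lambda})$) shows that the inequality $v \ge v_i(n_i)$ forces a finite path from $v$ to $v_i$. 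Hence $v \in \overline{P_l(\Lambda)} \subseteq \overline{\mathcal{B}(\Lambda)}$. For $(iv) \Longrightarrow (i)$, aperiodicity of $\Lambda$ forces every infinite path to be aperiodic, so $\mathcal{B}(\Lambda) = P_l(\Lambda)$; thus $\Lambda^0 = \overline{P_l(\Lambda)}$, and \cite[Theorem 4.3]{Brown} delivers semisimplicity.

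The substantive step is $(i) \Longrightarrow (iii)$. By Proposition \ref{pro concering socle and semisimplicity}(ii) one has $\SC(\KP_\mathsf{k}(\Lambda)) = I(H_\mathcal{A})$, so semisimplicity forces $I(H_\mathcal{A}) = \KP_\mathsf{k}(\Lambda)$, and the lattice isomorphism of Proposition \ref{pro the lattice isomorphism} then gives $H_\mathcal{A} = \Lambda^0$. Thus every generator $v(0)$ satisfies $v(0) + y = \sum_j w_j(p_j)$ for finitely many aperiodic atoms $w_j(p_j)$. Here the refinement property of $T_\Lambda$ (available via Theorem \ref{th isomorphism between talented monoid and type monoid}) is essential: iterating refinement against the right-hand sum, and using that a conical atom can only split as $0 + (\text{itself})$, produces a subset $J$ with $v(0) = \sum_{j \in J} w_j(p_j)$. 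Shifting yields the analogous decomposition for every generator $v(n)$, hence for every element of $T_\Lambda$, proving atomicity. If $a$ is any atom of $T_\Lambda$, expanding $a$ as a sum of generators and repeatedly applying the atom/conical combination collapses the expansion to a single generator $a = v(n)$; applying the previous decomposition to $v(n)$ and collapsing once more forces $v(n)$ to coincide with a single aperiodic atom, proving $a$ is aperiodic.

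The chief obstacle I anticipate is the iterated refinement argument in $(i) \Longrightarrow (iii)$: one must systematically track, across finitely many splits, which component is forced to vanish at each stage because the target summand is an atom. I plan to package this by induction on the number of summands on the right-hand side of $v(0) + y = \sum_j w_j(p_j)$, since the base case is a direct appeal to refinement and conicality, and the inductive step cleanly reduces the number of aperiodic atoms on the right while preserving the form of the equation.
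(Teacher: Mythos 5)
Your cycle $(ii)\Leftrightarrow(iii)\Rightarrow(iv)\Rightarrow(i)\Rightarrow(iii)$ is sound, and your treatment of the substantive implication $(i)\Rightarrow(iii)$ is a genuinely different route from the paper's. The paper goes $(i)\Rightarrow(ii)$ directly: semisimplicity gives $\overline{P_l(\Lambda)}=\Lambda^0$, hence $T_\Lambda=\langle P_l(\Lambda)\rangle$, and then it invokes the splitting and confluence lemmas together with Theorem \ref{th line points characrerization} to write every nonzero element as a sum of aperiodic atoms. You instead route through Proposition \ref{pro concering socle and semisimplicity}$(ii)$ to get $H_{\mathcal{A}}=\Lambda^0$, and then exploit the refinement property of $T_\Lambda$ (Theorem \ref{th isomorphism between talented monoid and type monoid}) plus conicality to turn the inequality $v(0)\le\sum_j w_j(p_j)$ into an equality $v(0)=\sum_{j\in J}w_j(p_j)$ against atoms. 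This is cleaner and more conceptual than the paper's appeal to confluence: a single application of $n$-ary refinement (which follows from binary refinement by induction) does the job, and the same conical-atom collapse immediately shows every atom is a single generator and hence aperiodic. For $(iii)\Rightarrow(iv)$ the paper is also more direct than you are: it simply notes that atoms are of the form $w(n)$ with $w$ a leaf, so $T_\Lambda=\langle\overline{\mathcal{B}(\Lambda)}\rangle$, and reads off $\Lambda^0=\overline{\mathcal{B}(\Lambda)}$ from the lattice isomorphism.

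Two local justifications need repair, though neither affects the conclusions. First, in $(iii)\Rightarrow(iv)$ you infer $v\in\overline{P_l(\Lambda)}$ from the existence of a finite path from $v$ to a line point; that inference is false in general (a vertex can connect to a line point without lying in the hereditary \emph{saturated} closure of $P_l(\Lambda)$ if it also emits paths into a region with no line points). The correct mechanism is that $v=\sum_i v_i(n_i)$ puts $v$ in the $\mathbb{Z}^k$-order ideal $\langle\overline{P_l(\Lambda)}\rangle$, whence $v\in H_{\langle\overline{P_l(\Lambda)}\rangle}=\overline{P_l(\Lambda)}$ by Lemma \ref{lem recovering HS subset}. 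Second, in $(iv)\Rightarrow(i)$ the claim that aperiodicity of $\Lambda$ forces \emph{every} infinite path to be aperiodic is false (condition $(A)$ only guarantees one aperiodic path per vertex); what you need, and what does hold, is that for a leaf $v$ the unique element of $v\Lambda^\infty$ must coincide with the aperiodic path supplied by condition $(A)$, so every leaf is a line point.
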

\begin{proof}
$(i)\Longrightarrow (ii)$ Since $\KP_\mathsf{k}(\Lambda)$ is semisimple, $\overline{P_l(\Lambda)}=\Lambda^0$ and so $T_\Lambda=\langle P_l(\Lambda)\rangle$. Now by using Lemma \ref{lem the splitting lemma} and Lemma \ref{lem the confluence lemma} together with Theorem \ref{th line points characrerization}, it can be shown that each nonzero element $a\in T_\Lambda$ can be written as a sum of aperiodic atoms of $T_\Lambda$. Therefore $T_\Lambda$ is atomic and each atom of $T_\Lambda$ is aperiodic. Now the remaining part follows from Theorem \ref{th Equivalence for aperiodicity in UF}. 

$(ii)\Longrightarrow (iii)$ Obvious.

$(iii)\Longrightarrow (iv)$ Since $T_\Lambda$ is atomic, $T_\Lambda\subseteq \langle \mathcal{B}(\Lambda)\rangle$. Therefore $T_\Lambda=\langle \overline{\mathcal{B}(\Lambda)}\rangle$ and so $\Lambda^0=H_{_{T_\Lambda}}=H_{_{\langle \overline{\mathcal{B}(\Lambda)}\rangle}}=\overline{\mathcal{B}(\Lambda)}$. The remaining part follows from Theorem \ref{th Equivalence for aperiodicity in UF}.

$(iv)\Longrightarrow (i)$ Since $\Lambda$ is aperiodic, each leaf in $\Lambda$ is in fact a line point. Hence $\Lambda^0=\overline{\mathcal{B}(\Lambda)}=\overline{P_l(\Lambda)}$ which implies $\KP_\mathsf{k}(\Lambda)=\SC(\KP_\mathsf{k}(\Lambda))$ by \cite[Corollary 3.11]{Brown}. Therefore $\KP_\mathsf{k}(\Lambda)$ is semisimple.
\end{proof}

\begin{rmk}\label{rem connection with Brown et al result}
Condition $(iv)$ of the above theorem was implicit in the set of equivalent conditions (see \cite[Theorem 4.3]{Brown}) obtained by Brown and an Huef. They proved that for semisimple Kumjian-Pask algebras the $\mathcal{V}$-monoid is atomic. Theorem \ref{th characterizing semisimplicity} shows that in this case $\mathcal{V}^{\gr}(\KP_\mathsf{k}(\Lambda))$ is atomic too. To obtain the above results, besides other things we mainly use the talented monoid realization of line points (Theorem \ref{th line points characrerization}), the key ingredient to describe socle and semisimple Kumjian-Pask algebras in the work of Brown and an Huef. Thus we feel that Proposition \ref{pro concering socle and semisimplicity} and Theorem \ref{th characterizing semisimplicity} have their roots in \cite{Brown}.
\end{rmk}

A complete description of the matrix decomposition of semisimple Kumjian-Pask algebras was proved in \cite{Brown}. It was shown that $\KP_\mathsf{k}(\Lambda)\cong \displaystyle{\bigoplus_{P_l(\Lambda)/\sim_\Lambda}} \mathbb{M}_{\infty}(K)$ where $\mathbb{M}_{\infty}(K)$ is the matrix ring of all $\mathbb{N}\times \mathbb{N}$ matrices with only finitely many nonzero entries and $\sim_\Lambda$ is an equivalence relation on $P_l(\Lambda)$ defined as follows: 

For $v,w\in P_l(\Lambda)$ with $v\Lambda^\infty=\{x\},w\Lambda^\infty=\{y\}$, 
\begin{center}
$v\sim_\Lambda w \Longleftrightarrow x(m)=y(n)$ for some $m,n\in \mathbb{N}^k$.
\end{center}

If $\KP_\mathsf{k}(\Lambda)$ is semisimple then we can provide a necessary and sufficient criterion for $T_\Lambda$ to be finitely generated as a $\mathbb{Z}^k$-monoid. First note that if $P_l(\Lambda)/\sim_\Lambda$ is finite then choosing a representative from each equivalence class we can form a finite generating set for $T_\Lambda$. Conversely suppose $\{a_1,a_2,...,a_m\}$ is a finite generating set for $T_\Lambda$ with $a_i\in \Lambda^0$ for all $i=1,2,...,m$. Since each element of $T_\Lambda$ can be written as a sum of aperiodic atoms, we can assume from the beginning that all the $a_i$'s are in fact aperiodic atoms. Then for any line point $v\in \Lambda^0$, there exists $\alpha\in \{1,2,...,m\}$ and $n\in \mathbb{Z}^k$ such that $v=a_\alpha (n)$ in $T_\Lambda$. This implies that there exists $w\in \Lambda^0$ and $m(>n)\in \mathbb{Z}^k$ such that

\begin{center}
    $(v,0)\longrightarrow (w,m)$ and $(a_\alpha,n)\longrightarrow (w,m)$ in $\mathbb{F}_{\overline{\Lambda}}\setminus \{0\}$
\end{center}
 which in turn implies that there are paths $\lambda,\mu\in \Lambda$ with
 \begin{center}
    $r(\lambda)=v$, $r(\mu)=a_\alpha$, $s(\lambda)=s(\mu)=w$,

    $d(\lambda)=m$ and $d(\mu)=m-n$.
 \end{center}
Suppose $v\Lambda^\infty=\{x\}$ and $a_\alpha\Lambda^\infty=\{y\}$. Then we have
\begin{center}
    $x(m)=s(x(0,m))=s(\lambda)=w=s(\mu)=s(y(0,m-n))=y(m-n)$.
\end{center}
Hence $v\sim_\Lambda a_\alpha$ and it proves that $|P_l(\Lambda)/\sim_\Lambda|\le m$. 

We now obtain the final result of this paper.
 
\begin{thm}\label{th isomorphic TM gives isomorphic KPA}
Let $\Lambda$ and $\Omega$ be row-finite $k$-graphs without sources and $\mathsf{k}$ any field. Let $\phi:T_\Lambda\longrightarrow T_\Omega$ be a $\mathbb{Z}^k$-monoid isomorphism. Then $\KP_\mathsf{k}(\Lambda)$ is semisimple if and only if $\KP_\mathsf{k}(\Omega)$ is semisimple. Moreover in this case if $T_\Lambda$ is finitely generated then $\KP_\mathsf{k}(\Lambda)\cong$ $\KP_\mathsf{k}(\Omega)$.
\end{thm}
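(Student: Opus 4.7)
The plan is to reduce both assertions to structural results already established in the paper. For the semisimplicity equivalence, Theorem~\ref{th characterizing semisimplicity} shows that $\KP_\mathsf{k}(\Lambda)$ is semisimple precisely when $T_\Lambda$ is atomic and $\mathbb{Z}^k$ acts freely on $T_\Lambda$. Both conditions are visibly invariant under $\mathbb{Z}^k$-monoid isomorphism: atomicity depends only on the algebraic pre-order, which is transported by any monoid isomorphism, while freeness of the action is directly preserved by any $\mathbb{Z}^k$-equivariant map. I would therefore simply apply $\phi$ to transport the characterisation across, obtaining the first half.

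For the $\mathsf{k}$-algebra isomorphism, I will compare the matrix decompositions
\[
\KP_\mathsf{k}(\Lambda) \cong \bigoplus_{P_l(\Lambda)/\sim_\Lambda} \mathbb{M}_\infty(\mathsf{k}), \qquad \KP_\mathsf{k}(\Omega) \cong \bigoplus_{P_l(\Omega)/\sim_\Omega} \mathbb{M}_\infty(\mathsf{k})
\]
recalled from \cite{Brown}. The crucial step is to identify, for aperiodic atoms of $T_\Lambda$ (equivalently line points of $\Lambda$ by Theorem~\ref{th line points characrerization}), the equivalence $\sim_\Lambda$ with the relation of lying in a common $\mathbb{Z}^k$-orbit of $T_\Lambda$. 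One direction is easy: if $v\sim_\Lambda w$ with $v\Lambda^\infty=\{x\}$, $w\Lambda^\infty=\{y\}$ and $x(m)=y(n)=u$, the defining relations applied along the single paths $x(0,m)$ and $y(0,n)$ give $v={}^m u$ and $w={}^n u$, so $v$ and $w$ share a $\mathbb{Z}^k$-orbit.

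The converse direction is the main technical point. Given $w={}^p v$ in $T_\Lambda$, I will transfer the identity $(w,0)=(v,p)$ to $M_{\overline{\Lambda}}$ via Theorem~\ref{thm:talisrefine}(ii) and invoke the confluence Lemma~\ref{lem the confluence lemma} to produce a common descendant $\gamma$. Because $v$ and $w$ are line points, each $\longrightarrow_i$-reduction starting from a vertex above them yields a unique vertex, so $\gamma$ itself must be a single vertex $(u,n')$; tracking the two reduction chains then gives $u=y(n')=x(n'-p)$, and by iterating further reductions one may assume $n'-p\in\mathbb{N}^k$, establishing $v\sim_\Lambda w$. This is essentially the argument sketched informally in the paragraph immediately preceding the theorem.

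Once the orbit/$\sim_\Lambda$ identification is in place, $\phi$ sends aperiodic atoms to aperiodic atoms while respecting $\mathbb{Z}^k$-orbits, so it descends to a bijection $P_l(\Lambda)/\sim_\Lambda \longleftrightarrow P_l(\Omega)/\sim_\Omega$. Finite generation of $T_\Lambda$ as a $\mathbb{Z}^k$-monoid is equivalent to finiteness of $P_l(\Lambda)/\sim_\Lambda$ by the discussion preceding the theorem, so both indexing sets are finite and share the same cardinality. Substituting this common number into the two matrix decompositions yields the required $\mathsf{k}$-algebra isomorphism $\KP_\mathsf{k}(\Lambda)\cong\KP_\mathsf{k}(\Omega)$. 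I expect the main obstacle to be precisely the converse of the orbit/$\sim_\Lambda$ identification, where one must carefully massage the confluence data so that the index arising from the reductions lies in the nonnegative cone $\mathbb{N}^k$; everything else is a formal consequence of invariants already established.
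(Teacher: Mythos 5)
Your proposal is correct and follows essentially the same route as the paper: the first assertion is read off from Theorem~\ref{th characterizing semisimplicity}, and the second comes from Brown's matrix decomposition once $P_l(\Lambda)/\sim_\Lambda$ and $P_l(\Omega)/\sim_\Omega$ are put in bijection via Theorem~\ref{th line points characrerization}, the isomorphism $T_\Lambda\cong M_{\overline{\Lambda}}$, and the confluence lemma. Your repackaging of the key step as ``for line points, $\sim_\Lambda$ coincides with lying in a common $\mathbb{Z}^k$-orbit of $T_\Lambda$'' is a clean way to organise the paper's well-definedness computation, and the index issue you worry about is automatic, since $(v,p)\longrightarrow(u,n')$ in $\mathbb{F}_{\overline{\Lambda}}$ already forces $n'-p\in\mathbb{N}^k$.
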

\begin{proof}
The first part follows from Theorem \ref{th characterizing semisimplicity}. Now if $T_\Lambda$ is finitely generated then so is $T_\Omega$ and then by the argument used above, both the sets $P_l(\Lambda)/\sim_\Lambda$ and $P_l(\Omega)/\sim_\Omega$ are finite. For the isomorphism now it suffices to show that these sets are in bijection. We define 
\begin{align*}
\tilde{\phi}:P_l(\Lambda)/\sim_\Lambda & \longrightarrow P_l(\Omega)/\sim_\Omega\\
[v] & \longmapsto [(\pi\circ \phi)(v(0))]
\end{align*}
where $\pi$ is the forgetful homomorphism defined in Theorem \ref{thm:talisrefine} (1). We first show that $\tilde{\phi}$ is well-defined. If $v\in P_l(\Lambda)$ then $v(0)$ is an aperiodic atom in $T_\Lambda$ by Theorem \ref{th line points characrerization}. Then $\phi(v(0))$ is an aperiodic atom in $T_\Omega$ and consequently $\pi(\phi(v(0)))$ is in $P_l(\Omega)$. Let $v,w\in P_l(\Lambda)$ be such that $v\sim_\Lambda w$. Then there exist $m,n\in \mathbb{N}^k$ such that $x(m)=y(n)$ where $v\Lambda^\infty=\{x\}$ and $w\Lambda^\infty=\{y\}$. Let $\phi(v)=u_1(t_1)$ and $\phi(w)=u_2(t_2)$. Then $u_1,u_2$ are aperiodic atoms in $T_\Omega$. Suppose $u_1\Omega^\infty=\{z_1\}$ and $u_2\Omega^\infty=\{z_2\}$. Now in $T_\Omega$ we have $z_1(m)(t_1)=~ ^{t_1}z_1(m)=~ ^{t_1-m}u_1=~ ^{-m}\phi(v)=\phi(x(m))=\phi(y(n))=~ ^{-n}\phi(w)=~ ^{t_2-n}u_2=~ ^{t_2}z_2(n)= z_2(n)(t_2)$. Now passing to $M_{\overline{\Omega}}$ via the natural isomorphism and using Lemma \ref{lem the confluence lemma}, we can argue that there exists $(u,p)\in \overline{\Omega}^0$ such that $(z_1(m),t_1),(z_2(n),t_2)\longrightarrow (u,p)$. It follows that there exist paths $\omega_1,\omega_2\in \Omega$ with $r(\omega_1)=z_1(m)$, $r(\omega_2)=z_2(n)$, $s(\omega_1)=s(\omega_2)=u$ and $d(\omega_1)=p-t_1$, $d(\omega_2)=p-t_2$. Then $z_1(m+p-t_1)=s(z_1(0,m+p-t_1))=s(z_1(0,m)z_1(m,m+p-t_1))=s(z_1(m,m+p-t_1))=s(\omega_1)=u=s(\omega_2)=s(z_2(n,n+p-t_2))=s(z_2(0,n+p-t_2))=z_2(n+p-t_2)$. Therefore $u_1\sim_\Omega u_2$ and $\tilde{\phi}$ is well-defined. A similar argument replacing the role of $\phi$ by $\phi^{-1}$, guarantees that $\tilde{\phi}$ is injective as well. Let $[u]\in P_l(\Omega)/\sim_\Omega$. Then there exists an aperiodic atom $v(i)\in T_\Lambda$ such that $\phi(v(i))=u$. Clearly $v\in P_l(\Lambda)$ and $\tilde{\phi}([v])=[\pi(\phi(v))]=[\pi(u(-i))]=[u]$. This proves that $\tilde{\phi}$ is surjective. Thus our desired bijection is established and now the isomorphism $\KP_\mathsf{k}(\Lambda)\cong$ $\KP_\mathsf{k}(\Omega)$ follows from \cite[Theorem 4.10]{Brown}. 
\end{proof}
\begin{rmk}\label{rem about complete invariance}
  If the Kumjian-Pask algebras $\KP_\mathsf{k}(\Lambda)$ and $\KP_\mathsf{k}(\Omega)$ are isomorphic as $\mathbb{Z}^k$-graded algebras then their graded $\mathcal{V}$-monoids are isomorphic as $\mathbb{Z}^k$-monoids. By Proposition \ref{pro realizing as a direct limit} $(i)$, it implies $T_\Lambda$ and $T_\Omega$ are isomophic as $\mathbb{Z}^k$-monoids. In Theorem \ref{th isomorphic TM gives isomorphic KPA}, we show that for semisimple Kumjian-Pask algebras if the talented monoids are $\mathbb{Z}^k$-isomorphic and finitely generated then the algebras are in fact isomorphic. However they may not be isomorphic as $\mathbb{Z}^k$-graded algebras. The reason is that semsimple Kumjian-Pask algebras of row-finite $k$-graphs without sources are non-unital; moreover according to \cite[Remark 4.1]{Brown} these algebras can be realized as Leavitt path algebras of certain directed graphs. These directed graphs fall in the class considered by the first named author and Va\v{s} in \cite{Vas}. It was shown that $\mathbb{Z}[x,x^{-1}]$-module isomorphism between the graded Grothendieck groups is not enough to ensure graded isomorphism between the corresponding Leavitt path algebras; for more details see \cite[Example 5.2 and Corollary 5.6]{Vas}.
\end{rmk}

\section{Examples and Illustration}\label{sec examples}
In this final section, we present some examples to illustrate our main results as well as to justify the necessity of the conditions appeared therein.

\begin{example}\label{ex grid graph}
Recall the infinite grid graph $\Omega_k$ from Example \ref{ex higher-rank graphs} $(iii)$. It is well known that $\Omega_k$ is cofinal and aperiodic. One can also verify these facts by using the talented monoid $T_{\Omega_k}$. For each $v\in \Omega_k^0=\mathbb{N}^k$ and each $m\in \mathbb{N}^k$, $|v\Lambda^m|=1$ (Note that $v\Lambda^m=\{v+m\}$). Hence every vertex is a leaf and so each generator $v(n)$ of $T_{\Omega_k}$ is an atom. Therefore $T_{\Omega_k}$ is an atomic monoid. Now a straightforward argument using the Confluence Lemma \ref{lem the confluence lemma} ensures that each atom of $T_\Lambda$ is aperiodic. Therefore $\Omega_k$ is aperiodic by Theorem \ref{th Equivalence for aperiodicity in UF}. 

Let $J$ be any nonzero $\mathbb{Z}^k$-order ideal of $T_{\Omega_k}$. Choose any $v\in H_{_J}$. Now for any $w\in \Omega_k^0$, we have $w=(v\vee w)((v\vee w)-w)\in \langle v \rangle\subseteq J$. Hence $J=T_{\Omega_k}$ which shows that $\Omega_k$ is cofinal in view of Theorem \ref{pro sufficient condition for cofinality}. The Kumjian-Pask algebra $\KP_\mathsf{k}(\Omega_k)$ is thus both simple and semisimple; in fact $\KP_\mathsf{k}(\Omega_k)\cong \mathbb{M}_{\infty}(K)$. 
\end{example}

\begin{example}\label{ex neither simple nor semisimple}

Consider a $2$-graph $\Lambda$ with the following $1$-skeleton:
\[
\begin{tikzpicture}[scale=1.5]
\node[inner sep=1.5pt, circle,draw,fill=black,label={$u$}] (A) at (0,0) {};	
\node[inner sep=1.5pt, circle,draw,fill=black,label={$v$}] (B) at (2,0) {};

\path[->, red, dashed, >=latex,thick] (A) edge [bend left] node[above=0.05cm]{$e$} (B);
\path[->,blue, >=latex,thick] (A) edge [bend right] node[below=0.05cm]{$f$} (B);
\path[->, red, dashed,thick, every loop/.style={looseness=35}](A)edge[in=170, out=110, loop] node [above=0.1cm]{$x$}(a) {} (A);
\path[->, blue,thick, every loop/.style={looseness=35}](A)edge[in=190, out=250, loop] node [below=0.1cm]{$y$}(b) {} (A);
\end{tikzpicture}
\]
As usual the dashed red edges have degree $(0,1)$ and the solid blue edges have degree $(1,0)$. The factorization rules are uniquely determined by the skeleton. $\Lambda$ can also be realized as the $2$-graph $f^*(E^*)$ (see Example \ref{ex higher-rank graphs} $(v)$) where $E$ is the following $C_1$-comet:

\[
\begin{tikzpicture}[scale=1.2]
\node[inner sep=1.5pt, circle,draw,fill=black,label={$v$}] (A) at (0,0) {};	
\node[inner sep=1.5pt, circle,draw,fill=black,label={$u$}] (B) at (2,0) {};
	
\path[->,black, >=latex,thick] (A) edge [left] node[above=0.05cm]{} (B);
\path[->, black,thick, every loop/.style={looseness=35}](B)edge[in=315, out=45, loop] node [](d) {} (B);
\end{tikzpicture}
\] 
\noindent
Note that in $T_\Lambda$, $v=u((1,0))$. This shows that $T_\Lambda$ is $\mathbb{Z}^2$-simple and hence $\Lambda$ is cofinal. Therefore $\KP_\mathsf{k}(\Lambda)$ is graded simple by Theorem \ref{th graded basic simplicity of KPA}. From the $1$-skeleton we can see that $u\Lambda^{(1,0)}=\{y\}$, $u\Lambda^{(0,1)}=\{x\}$, $v\Lambda^{(1,0)}=\{f\}$ and $v\Lambda^{(0,1)}=\{e\}$. Therefore $u,v$ both are atoms in $T_\Lambda$ and thus $T_\Lambda$ is atomic. However $\mathbb{Z}^2$ does not act freely on $T_\Lambda$ since $^{(1,0)}u=u$. So $\Lambda$ is not aperiodic by Theorem \ref{th Equivalence for aperiodicity in UF} and $\KP_\mathsf{k}(\Lambda)$ is neither simple nor semisimple by applying Corollary \ref{cor simplicity of KPA} and Theorem \ref{th characterizing semisimplicity}. The fact that $\Lambda=f^*(E^*)$ is periodic follows also from \cite[Remark 4.4]{Kumjian-Pask}.
\end{example}

If we determine the $k$-graph monoids for the grid graph $\Omega_2$ and the $2$-graph $\Lambda$ of Example \ref{ex neither simple nor semisimple}, then we can see that $M_{\Omega_2}$ and $M_{\Lambda}$ both are isomorphic to $\mathbb{N}$. However their talented monoids differ. One can check that $T_{\Omega_2}\cong \displaystyle{\bigoplus_{\mathbb{Z}^2}} ~\mathbb{N}$, on the other hand since in $T_\Lambda$, $v=u((1,0))=u((0,1))=u$ and subsequently $^{n}u=u$ for all $n\in \mathbb{Z}^2$, it follows that $T_\Lambda\cong \mathbb{N}$. Therefore talented monoids are more effective than ordinary $k$-graph monoids in distinguishing Kumjian-Pask algebras. \vspace{0.25cm}

In both of the above examples the talented monoids are atomic. Now we give an example of a $k$-graph with non-atomic talented monoid. Moreover this example serves another purpose. It shows that the converse of Proposition \ref{pro sufficient condition of aperiodicity} does not hold in general.

\begin{example}\label{ex simple but not semisimple}
The following is the $1$-skeleton of a \textit{rank-$2$ Bratteli diagram}, first introduced in \cite{Pask} (for more details see \cite[Definition 4.1 and Example 6.7]{Pask}). This $2$-graph is denoted as $\Lambda(2^\infty)$ and it is a rank-$2$ Bratteli diagram of infinite depth. 

\[
\begin{tikzpicture}[scale=1.5]
\node[inner sep=1.5pt, circle,draw,fill=black,label={$v$}] (A) at (0,0) {};	

\node[inner sep=1.5pt, circle,draw,fill=black] (B1) at (2,1) {};
\node[inner sep=1.5pt, circle,draw,fill=black] (B2) at (2,-1) {};

\node[inner sep=1.5pt, circle,draw,fill=black] (C1) at (4,2) {};
\node[inner sep=1.5pt, circle,draw,fill=black] (C2) at (4,1) {};	
\node[inner sep=1.5pt, circle,draw,fill=black] (C3) at (4,-1) {};
\node[inner sep=1.5pt, circle,draw,fill=black] (C4) at (4,-2) {};	

\node[circle,inner sep=0pt] (u1) at (4.5,2.25) {$.$};
\node[circle,inner sep=0pt] (u2) at (4.75,2.375) {$.$};	
\node[circle,inner sep=0pt] (u3) at (5,2.5) {$.$};

\node[circle,inner sep=0pt] (v1) at (4.5,0) {$.$};
\node[circle,inner sep=0pt] (v2) at (4.75,0) {$.$};	
\node[circle,inner sep=0pt] (v3) at (5,0) {$.$};

\node[circle,inner sep=0pt] (w1) at (4.5,-2.25) {$.$};
\node[circle,inner sep=0pt] (w2) at (4.75,-2.375) {$.$};	
\node[circle,inner sep=0pt] (w3) at (5,-2.5) {$.$};

\path[->, red, dashed, >=latex,thick] (B1) edge [] node[]{} (B2);
\path[->, red, dashed, >=latex,thick] (B2) edge [bend left] node[]{} (B1);
\path[->, red, dashed, >=latex,thick] (C1) edge [] node[]{} (C2);
\path[->, red, dashed, >=latex,thick] (C2) edge [] node[]{} (C3);
\path[->, red, dashed, >=latex,thick] (C3) edge [] node[]{} (C4);
\path[->, red, dashed, >=latex,thick] (C4) edge [bend left] node[]{} (C1);
\path[->, red, dashed,thick, every loop/.style={looseness=35}](A)edge[in=225, out=135, loop] node []{}(e) {} (A);
\path[->,blue, >=latex,thick] (B1) edge [] node[]{} (A);
\path[->,blue, >=latex,thick] (B2) edge [] node[]{} (A);
\path[->,blue, >=latex,thick] (C1) edge [] node[]{} (B1);
\path[->,blue, >=latex,thick] (C3) edge [] node[]{} (B1);
\path[->,blue, >=latex,thick] (C2) edge [] node[]{} (B2);
\path[->,blue, >=latex,thick] (C4) edge [] node[]{} (B2);

\end{tikzpicture}
\]
Clearly $\Lambda(2^\infty)$ is a row-finite $2$-graph without sources. Note that Since $|u\Lambda(2^\infty)^{e_1}|> 1$ for each $u\in (\Lambda(2^\infty))^0$, no vertex is a leaf and so $T_{\Lambda(2^\infty)}$ is not atomic. Hence $\KP_\mathsf{k}(\Lambda(2^\infty))$ is not semisimple by Theorem \ref{th characterizing semisimplicity}. Suppose $I$ is any nonzero $\mathbb{Z}^2$-order ideal of $T_{\Lambda(2^\infty)}$ and $u\in H_{_I}$. If $u$ lies on level $N$, then it is easy to see that for any vertex $w$ on the same level $u=w(le_2)$ for some $0\le l < N$. Thus every vertex of level $N$ is in $H_{_I}$ and consequently $v=\displaystyle{\sum_{\alpha\in v\Lambda(2^\infty)^{Ne_1}}} s(\alpha)(Ne_1)\in I$. This shows that $I=T_{\Lambda(2^\infty)}$ since for each vertex $x$, $v\ge x(m)$ for some $m\in \mathbb{N}^2$. Therefore $T_{\Lambda(2^\infty)}$ is a simple $\mathbb{Z}^2$-monoid and $\Lambda(2^\infty)$ is cofinal. It is also aperiodic by \cite[Theorem 5.1]{Pask} or \cite[Proposition 7.7]{Pino}. The Kumjian-Pask algebra $\KP_\mathsf{k}(\Lambda(2^\infty))$ is thus simple. However $T_{\Lambda(2^\infty)}$ contains periodic elements e.g., $v=v(e_2)$. 
\end{example}
The above example justifies that we can not simply drop the atomicity hypothesis in Theorem \ref{th Equivalence for aperiodicity in UF} and Corollary \ref{cor simplicity of KPA}.

As stated in Theorem \ref{th characterizing semisimplicity}, the two conditions on talented monoids which characterize semisimple Kumjian-Pask algebras are: $(i)$ $T_\Lambda$ is atomic and $(ii)$ every nonzero element of $T_\Lambda$ has trivial isotropy with respect to the natural $\mathbb{Z}^k$-action. We assert the readers that these conditions are independent. Example \ref{ex neither simple nor semisimple} already guarantees the existence of atomic talented monoids on which $\mathbb{Z}^k$ does not act freely and so $(i)$ does not imply $(ii)$. The next example shows that $(ii)$ may not imply $(i)$.

\begin{example}\label{ex non atomic TM with free Z^k action}
Consider the $2$-graph $\Lambda$ whose $1$-skeleton is shown below.
\vspace{-1cm}
\[
\begin{tikzpicture}[scale=0.35]

\node[circle,draw,fill=black,inner sep=0.5pt] (p11) at (1, 1) {$.$} 
edge[-latex, blue, thick,loop, out=40, in=-40, min distance=80, looseness=1.5] (p11)
edge[-latex, blue,thick,loop, out=50, in=-50, min distance=123, looseness=2.2] (p11)
edge[-latex, blue,thick, loop, out=60, in=-60, min distance=190, looseness=2.3] (p11)
edge[-latex, red,thick, loop, dashed, out=125, in=235, min distance=115, looseness=1.8] (p11)
edge[-latex, red, thick, loop, dashed, out=115, in=245, min distance=190, looseness=1.7] (p11);
                                                               
\node at (0.9, -0.5) {$v$};
\node at (-4, 1) {$g_1,g_2$};
\node at (7,1) {$f_1,f_2,f_3$};
\end{tikzpicture}
\]
\vspace{-1cm}

\noindent
We set the factorization rules as: $f_ig_j=g_if_j$ for $i,j=1,2$ and $f_3 g_i=g_if_3$ for $i=1,2$. The basic relations in $T_\Lambda$ are
\begin{equation}\label{eqn basic relations}
    v = v((1,0))+v((1,0))+v((1,0))=3v(e_1) ~\text{and}~ v = v((0,1))+v((0,1))=2v(e_2).
\end{equation}
Note that $|v\Lambda^{e_1}|=3, |v\Lambda^{e_2}|=2$ and so $v$ is not an atom in $T_\Lambda$. Since $v$ is the only vertex of $\Lambda$, it follows that $T_\Lambda$ is not atomic. Now we show that $\mathbb{Z}^2$ acts freely on $T_\Lambda$. On the contrary assume that there exists a periodic element $0\neq x\in T_\Lambda$. Then we can found distinct $m,n\in \mathbb{N}^2$ such that $^m x=~^n x$. Since $\Lambda^0=\{v\}$ and $v$ is not a source, so using the defining relations of $T_\Lambda$, we can write $x=tv(p)$ for some $t\in \mathbb{N}\setminus \{0\}$ and $p\in \mathbb{N}^2$. Then $^m (tv(p))=~^n (tv(p))$ which subsequently implies $tv(m)=tv(n)$. Acting by suitable element of $\mathbb{Z}^2$ if necessary, we can take $m,n$ such that $m\wedge n=0$. There are three cases to consider:

\emph{Case-1:} $m=0,n> 0$. Let $n=n_1e_1+n_2e_2$. By repeated use of the relations in (\ref{eqn basic relations}), we have $v(0)=3^{n_1}2^{n_2}v(n_1e_1+n_2e_2)$. Thus $3^{n_1}2^{n_2}tv(n)=tv(n)$. Now since $T_\Lambda$ is conical and cancellative, this implies $3^{n_1}2^{n_2}=1$ and so $n_1=n_2=0$ which is a contradiction as $n> 0$.

\emph{Case-2:} $m>0,n=0$. This is exactly similar to Case-1.

\emph{Case-3:} $m,n>0$. Without loss of generality assume that $m=ae_1$ and $n=be_2$ where $a,b$ are positive integers. Applying relations of (\ref{eqn basic relations}), we can write $v(ae_1)=3\cdot 2^{b+1}v((a+1,b+1))$ and $v(be_2)=2\cdot 3^{a+1}v((a+1,b+1))$. Thus $t\cdot 3\cdot 2^{b+1}v((a+1,b+1))=t\cdot 2\cdot 3^{a+1} v((a+1,b+1))$ which implies $3^a=2^b$ and hence $a=b=0$, a contradiction.

So there is no periodic element in $T_\Lambda$. By Proposition \ref{pro sufficient condition of aperiodicity}, $\Lambda$ is aperiodic. It is trivially cofinal. Thus the Kumjian-Pask algebra $\KP_\mathsf{k}(\Lambda)$ is simple. However it is not semisimple since $T_\Lambda$ is not atomic. As $\Lambda$ contains cycles, $\KP_\mathsf{k}(\Lambda)$ falls under the class of purely infinite simple algebras in view of \cite[Theorem 6.3]{LPIS}.
\end{example}

{\bf Acknowledgments.} This work was initiated while the authors were attending the FiLPART School, Bohol, Philippines in July, 2024. The authors would like to thank Alfilgen Sebandal for organizing this gathering. The first named author acknowledges Australian Research Council Discovery Project Grant DP230103184. The second and fourth named authors convey their sincere gratitude to (retd.) Prof. M.K. Sen of University of Calcutta, India for his continuous encouragement in their works. The second named author likes to thank University Grants Commission (UGC), India for providing Research Fellowship (ID:
211610022010/ Joint CSIR-UGC NET JUNE 2021).

{\bf Competing Interests.} The authors declare that there are no financial or non-financial competing interests related to this work.

\end{document}